\newtheorem{lemma}{Lemma}[section]
\newtheorem{claim}[lemma]{Claim}
\newtheorem{corollary}[lemma]{Corollary}
\newtheorem{definition}[lemma]{Definition}
\newtheorem{proposition}[lemma]{Proposition}
\newtheorem{remark}[lemma]{Remark}
\newtheorem{theorem}[lemma]{Theorem}
\newcommand{\field}[1]{\mathbb{#1}}
\newcommand{\scr}[1]{\mathscr{#1}}
\newcommand{\C}{\field{C}}
\newcommand{\N}{\field{N}}
\newcommand{\R}{\field{R}}
\newcommand{\Rn}{{\R^n}}
\newcommand{\Rp}{{\R^p}}
\newcommand{\Rs}{{\R^s}}
\newcommand{\Rnbar}{\overline{\Rn}}
\newcommand{\Cal}[1]{\mathcal{#1}}
\newcommand{\arel}{{\bf a}_{rel}}
\newcommand{\aph}{{\alpha}}
\newcommand{\ba}{{\bf a}}
\newcommand{\bB}{{\bf B}}
\newcommand{\bc}{{\bf c}}
\newcommand{\bF}{{\bf F}}
\newcommand{\bG}{{\bf G}}
\newcommand{\bI}{{\bf I}}
\newcommand{\bm}{{\bf m}}
\newcommand{\bM}{{\bf M}}
\newcommand{\bMbar}{{\overline{\bf M}}}
\newcommand{\bn}{{\bf n}}
\newcommand{\bbo}{{\bf 0}}
\newcommand{\bp}{{\bf p}}
\newcommand{\bP}{{\bf P}}
\newcommand{\br}{{\bf r}}
\newcommand{\bR}{{\bf R}}
\newcommand{\bS}{{\bf S}}
\newcommand{\bt}{{\bf t}}
\newcommand{\bu}{{\bf u}}
\newcommand{\bv}{{\bf v}}
\newcommand{\bV}{{\bf V}}
\newcommand{\bw}{{\bf w}}
\newcommand{\bx}{{\bf x}}
\newcommand{\by}{{\bf y}}
\newcommand{\bz}{{\bf z}}
\newcommand{\cH}{\Cal{H}}
\newcommand{\cL}{\Cal{L}}
\newcommand{\cM}{\Cal{M}}
\newcommand{\cT}{\Cal{T}}
\newcommand{\cU}{\Cal{U}}
\newcommand{\cV}{\Cal{V}}
\newcommand{\cW}{\Cal{W}}
\newcommand{\cY}{\Cal{Y}}
\newcommand{\clos}{{\rm clos}}
\newcommand{\crit}{{\rm crit}}
\newcommand{\dd}{{\partial}}
\newcommand{\dist}{{\rm dist}}
\newcommand{\dlt}{{\delta}}
\newcommand{\gm}{\gamma}
\newcommand{\Gm}{\Gamma}
\newcommand{\kp}{{\kappa}}
\newcommand{\la}{{\langle}}
\newcommand{\lbd}{{\lambda}}
\newcommand{\Lbd}{{\Lambda}}
\newcommand{\lk}{{\rm Lk}}
\newcommand{\Mbar}{\overline{M}}
\newcommand{\nb}{{\nabla}}
\newcommand{\omg}{\omega}
\newcommand{\Omg}{\Omega}
\newcommand{\ra}{{\rangle}}
\newcommand{\rd}{{\rm d}}
\newcommand{\Rgo}{{\R_{\geq 0}}}
\newcommand{\scrT}{\scr{T}}
\newcommand{\sgm}{\sigma}
\newcommand{\Sgm}{\Sigma}
\newcommand{\Tht}{{\Theta}}
\newcommand{\ve}{\varepsilon}
\newcommand{\vp}{\varphi}
\newcommand{\zt}{{\zeta}}
\numberwithin{equation}{section}
\begin{document}
\title{
Equi-singularity of real families and 
Lipschitz Killing curvature densities at infinity
}
\author{Nicolas Dutertre and Vincent Grandjean}

\address{Univ Angers, CNRS, LAREMA, SFR MATHSTIC, F-49000 Angers, France}
\email{nicolas.dutertre@univ-angers.fr}
\address{Departamento de Matem\'atica, 
Universidade Federal de Santa Catarina, 
88.040-900 Florianópolis - SC, Brasil,
Brasil}
\email{vincent.grandjean@ufsc.br}
%
%
%
%
%
\thanks{{$ $ \\Both authors were partially supported by the ANR project LISA 
17-CE400023-01}.\\
Nicolas Dutertre is partially supported by the Centre Henri Lebesgue, program ANR-11-LABX-0020-0.\\
Vincent Grandjean was supported by 
FUNCAP/CAPES/CNPq-Brazil grant  306119/2018-8.}


\begin{abstract}
Fix an o-minimal structure expanding the ordered field of real numbers.
Let $(W_\by)_{\by\in\Rs}$ be a definable family of closed 
subsets of $\Rn$ whose total space $W = \cup_\by W_\by\times\by$ is 
a closed connected $C^2$ definable sub-manifold of $\Rn\times\Rs$.
Let $\vp:W \to\Rs$ be the restriction of the projection to the second
factor. 

After defining $K(\vp)$, the set of generalized critical values of $\vp$,
showing that they are closed and definable of positive codimension in $\Rs$,
contain the bifurcation values of $\vp$ and are stable under 
generic plane sections, we prove that all the Lipschitz-Killing curvature  
densities at infinity $\by \mapsto \kp_i^\infty(W_\by)$ are continuous 
functions over $\Rs\setminus K(\vp)$. When $W$ is a $C^2$ definable 
hypersurface of $\Rn\times\Rs$, we further obtain that the symmetric 
principal curvature densities at infinity $\by \mapsto \sgm_i^\infty(W_\by)$ 
are continuous functions over $\Rs\setminus K(\vp)$. 
\end{abstract}

\maketitle
\tableofcontents
%
%
%
%
%
%
%
%
%
%
%
%
%
%
%
%
%
%
%
%
%
%
%
%
%
%
%
%
%
%
%
%
%
%
%
%
%
%
%
%
%
%
%
%
%
%
%
%
\section{Introduction}
Let $(W_\by)_{\by\in P}$ be a family of subsets of $\Rn$ or $\C^n$ 
with parameter space $P$. This is the family of the (projections onto $\Rn$ 
or $\C^n$ of the) levels of 
the projection $\vp$ of the total space
of the family  $W:= \cup_{\by\in P} W_\by \times \by$ onto the parameter
space 
$$
\vp : W \to P.
$$

Assuming that $\vp$ is continuous, this family is
\em locally trivial at $\bc \in P$ \em if $\vp$ induces a trivial topological 
fibre bundle structure over a neighbourhood of $\bc$, with model fibre
$W_\bc$ for which $\vp$ is the projection-onto-the-base mapping. Similarly, 
a $C^k$ mapping $F:X\to P$, with $k\geq 1$, 
is \em locally trivial at $\bc \in P$ \em
if it induces a $C^{k-1}$ trivial fibre bundle structure over a neighbourhood
of $\bc$, with model fibre $F^{-1}(\bc)$. A proper and surjective $F$ induces
a locally trivial fibre bundle over $P$, by Ehresmann's Fibration Theorem
\cite{Ehr}. A value at which $F$ is not locally trivial is a 
\em bifurcation value. \em Critical values are bifurcation values.

Whenever the mapping or the family is rigid/regular in some explicit sense,
bifurcations values are rare, e.g. 
real and complex polynomial functions admits finitely 
many bifurcation values \cite{Tho}. Regular bifurcation values are even 
rarer and are hard to detect. When $X$ and $P$ are affine spaces, 
sufficient conditions for $F$ or $W$ to be locally trivial at a regular
value already exist. Among others are 
\em Malgrange-Rabier condition \cite{Pha,Rab,KOS}, $\bt$-regularity 
\cite{ST,Tib98,Tib99,DRT}, $\rho$-regularity \cite{NeZa,TLLZa,DRT}, 
spherical-ness at infinity \cite{dAGr1,dAGr2,DuGr1}. \em
Any such regularity condition at infinity at a given value compel the 
behaviour at infinity of the nearby levels to be "tame".

An interesting task, related to equi-singularity theory, is to find 
numerical criteria characterizing these regularity conditions at infinity.
For instance, following Teissier's works on plane sections and polar invariants 
\cite{TeissierCargese73,Teissier1977,TeissierLaRabida}, Tib\u{a}r produced, 
for each parameter $t$ of a complex family of affine hypersurfaces 
$(\{\bx\in\C^n : F(\bx,t) =0\})_{t\in\C}$, where $F:\C^n \times \C \mapsto
\C$ is a polynomial, a polar-like invariant vector $(\aph^*(t))$ with integer 
values \cite{Tib98}, whose local constancy at a value 
$c$ is equivalent to 
$\bt$-regularity nearby $c$. 
(See also \cite{HaLe,Par} for earlier special occurrences of equi-singularity 
numerical criteria for complex polynomials.)

Possible avatars, in the real setting, of the previously mentioned local 
(complex) polar invariants are integrals of Lipschitz-Killing curvatures. 
Since in general such integrals, as functions over the parameters 
of a given real family $W$, do not take only isolated values, we should  
instead look for the continuity of such functions. 
A first, yet essential, step consists of chasing sufficient conditions:
Given $\bR$ a regularity condition at infinity, can we produce 
a vector valued mapping, defined over the parameter space
of the family $W$, which would be continuous in a neighbourhood of a value 
$\bc$ once the family $W$ would be $\bR$-regular at $\bc$ ? 

Our approach for the results presented here feeds on two facts: 
(i) 
The stability of $\bt$-regularity of complex polynomial functions 
by generic plane sections \cite{Tib98}; (ii)
The general results about Lipschitz-Killings curvatures/measures of 
tame sets of the first named author \cite{DutertreGeoDedicata2004,DutertreAdvGeo,
DutertreGeoDedicata2012}
rely heavily on generic plane sections. 
In this paper, we produce a vector valued mapping, with components
built from Lipschitz-Killing curvatures/measures,  
and show it is continuous at any regular value which is also
Malgrange-Rabier regular.

\medskip
We treat the more general context of definable families of subsets of
$\Rn$ whose total space is a closed sub-manifold. It is sufficient to
work with a connected total space, which we assume. Let be given an 
o-minimal structure expanding the ordered field of real numbers. 
Consider the following $C^2$ definable mapping: 
$$
\vp : W \to \Rs, \, (\bx,\by) \mapsto \by
$$
defined over the definable closed connected $C^2$ sub-manifold $W$
of $\Rn\times\Rs$ with $\dim W \geq s$.  Since, by definition  
$\vp^{-1}(\by) = W_\by \times\by$, we are interested in the local triviality 
properties of the 
definable family $(W_\by)_{\by\in\Rs}$ of closed subsets of $\Rn$ nearby 
regular values (of $\vp$). Passing to the graph of the mapping $F$, the 
looked for properties for $F$ are in a simple and explicit correspondence with 
those of $\vp$, as checked here. The choice of $C^2$ is arbitrary. Everything
we will do have analogues in $C^k$-regularity, $2\leq k \leq \infty$.

The results of this  paper are 
of two sorts: I) those about Malgrange-Rabier regularity condition (see 
Definition \ref{def:Malgrange-Rabier}) later on shortened as (MR), and 
II) those about Lipschitz-Killing curvature densities at infinity. 

\smallskip\noindent
I) It is simpler to describe (MR)-regularity in the case of function (i.e. 
$s=1$): \em A value $c$ is (MR)-regular for $\vp$ if there exists a positive 
constant
$M_c$ such that for any sequence $(\bw_k)_k$ of $W$ such that
(i) $|\bw_k|\to \infty$, (ii) $\vp(\bw_k) \to c$, then \em
$$
k \gg 1 \; \Longrightarrow \; |\bw_k|\cdot |\nb \vp(\bw_k)| \geq M_c.
$$ 
The set of critical values of $\vp$ is $K_0(\vp)$ and that of non (MR)-regular 
values is $K_\infty(\vp)$. We shall proof in this most general 
context the following results:

\medskip\noindent
{\bf Theorem I.} \em 1) $K_\infty(\vp)$ is definable;
\\
2) $K(\vp) := K_0(\vp) \cup K_\infty(\vp)$ is closed, definable
(Lemma \ref{lem:Kf-def-closed}) and of positive codimension 
(Theorem \ref{thm:morse-sard}); 
\\
3) the family $(W_\by)_{\by\notin K(\vp)}$ is $C^1$ locally trivial at each of 
its point (Theorem \ref{thm:trivialisation}), from which we deduce
$Bif(\vp) \subset K(\vp)$ (Corollary \ref{cor:bif-kf});
\\
4) (MR)-condition at a regular value is stable by generic plane sections
(Theorem \ref{thm:M-R-plane-sections}). 
\em 
  
\medskip
If all known occurrences of statements 1), 2) and 3) of Theorem I are 
special cases of the present setting, point 4) is new in the real setting, 
and outside the case of complex polynomial functions.

\medskip\noindent
II) Let $Z$ be a closed connected $C^2$ sub-manifold of $\Rn$ of dimension $d$ 
(equipped with the restriction of the Euclidean metric tensor). 
The other objects we will consider are the Lipschitz-Killing curvature
densities at infinity of $Z$ defined as
$$
\kp_i^\infty(Z) := 
\lim_{R \to +\infty} \frac{1}{R^{d-i}} \int_{Z \cap \bB_R^n} 
K_i(Z,\bx) \rd\bx,
\;\; {\rm for} \;\;
i=0,\ldots,d,
$$
where $K_i(Z,\bx)$ is the $i$-th Lipschitz-Killing curvature of $Z$ at 
$\bx$. Note that $K_i(Z,\bx)=0$ if $i$ is odd or $\geq d+1$.
When $Z$ is the hypersurface $\{f=0\}$, oriented by $-\nb f|_Z$, let
$\sgm_i^Z(\bx)$ be the $i$-th symmetric function of the 
principal curvatures of $Z$ at $\bx$.
We further
define the symmetric principal curvature densities at infinity of $Z$ as
$$
\sgm_i^\infty (Z) := \lim_{R \to +\infty} \frac{1}{R^{n-1-i}}
\int_{Z \cap  \bB_R^n} \sigma_i^Z (\bx) \rd\bx,
\;\; 0 \leq i\leq n-1 .
$$
When $i$ is even $2\sgm_i^\infty (Z)= \kp_i^\infty(Z)$, but when $i$ is odd 
in general $\sgm_i^\infty(Z)$ is not identically null.
Considering Corollary \ref{cor:trivialisation}, 
we can reduce to the case where $W_\by$ is connected
for each $\by\notin K(\vp)$.  
A concise way to present our main results  Theorem \ref{ContCurvaturesMappings}
and Theorem \ref{thm:main-hyper} 
is as follows:

\medskip\noindent
{\bf Theorem II.} \em Assume that $W_\by$ is connected at each $\by \notin 
K(\vp)$. For each $i=0,\ldots,\dim W - s,$ the function $\by \mapsto 
\kp_i^\infty(W_\by)$ is continuous over $\Rs\setminus K(\varphi)$.
When the $W_\by$'s are furthermore hypersurfaces, for each $i=0,\ldots,n-1$,
the function $\by \mapsto \sgm_i^\infty(W_\by)$ is continuous over
$\Rs\setminus K(\vp)$.
\em

\medskip
Theorem II is the continuation of the results of the second named author 
\cite{GrandjeanBullBraz}, stating that the following total Gauss-Kronecker 
curvature functions, associated with a $C^2$ definable function 
$f:\Rn\to\R$, have at most finitely many discontinuities
$$
y \mapsto GK(y) := \int_{f^{-1}(y)} \kp_y(\bx)\rd\bx,\;\; {\rm and
} \;\;
y \mapsto |GK|(y) := \int_{f^{-1}(y)} |\kp_y(\bx)|\rd\bx,
\;\; {\rm where} \;\; \kp_y := \sgm_{n-1}^{f^{-1}(y)},
$$
and of our recent result \cite{DuGr1}, where continuity of 
$GK$ and $|GK|$ is proved at any regular value at which the function is
also spherically regular at infinity. It is worth mentioning here the recent 
preprint \cite{DihnPham} investigating 
sufficient conditions of equi-singularity at infinity in terms of topological
properties of set valued mappings over the levels of the given mapping $F$. 
Theorems I and II are to be associated with the results of 
\cite{ComteAnnENS2000,ComteMerleAnnENS2008,NguyenValette} about the continuity 
of local Lipschitz-Killing invariants along Whitney and Verdier strata of 
definable subsets.

\medskip
The paper is organized as follows: Section \ref{section:miscellaneous} 
introduces notations and a few reminders used in what follows. 
Section \ref{ref:o-min} lists a few facts about definability in order to
provide some exhaustive-ness. 
Section \ref{section:linear-algebra} to Section \ref{section:bif} present 
material for  Theorem I. We have given detailed proofs of facts 1) to 3). Our 
treatment uses heavily the Rabier number (Definition \ref{def:rabier}), and 
Lemma \ref{lem:nu-projection} explains why dealing with families includes 
the case of mappings. The proof of Theorem \ref{thm:trivialisation}
following Rabier's point of view \cite{Rab} is of interest in its own, and so 
is Remark \ref{rmk:flow-analytic}. 
Section \ref{section:sub-levels} will briefly deal with a trivialisation
result of the sub-level family associated with a family of hypersurfaces
(Proposition \ref{prop:trivial-sub-level}). 
Section \ref{section:link-infty} reviews in an explicit fashion 
properties on the family of links at infinity, which is a central 
tool to deal with the Lipschitz-Killing measures/curvatures.
We give also proofs of Proposition \ref{prop:link-infty-const}
and of the lesser known Proposition \ref{prop:link-infty-const-sub-level}.
Section \ref{section:plane-section-malgrange} takes care of 
point 4) of Theorem I. Theorem \ref{thm:M-R-plane-sections} is new and is 
one of the two cornerstones over which will be established Theorem II.
Section \ref{section:LKM} to Section \ref{section:cont-curv-hyper}
are devoted to present all the necessary material around Lipschitz-Killing 
measures/curvatures to obtain Theorem II. The second cornerstone needed
to produce Theorem II is the Gauss-Bonnet type Theorem 
\ref{thm:GaussBonnet} (from the 
first author \cite{DutertreAdvGeo,DutertreGeoDedicata2012}).

%
%
%
%
%
%
%
%
%
%
%
%
%
%
%
%
%
%
%
%
%
%
%
%
%
%
%
%
%
%
%
%
%
%
%
%
%
%
%
%
%
%
%
%
%
%
%
%
%
\section{Miscellaneous}\label{section:miscellaneous}
Let $\Rgo := [0,+\infty[$ and $\R_{>0} := ]0,+\infty[$.

We denote by $\bbo$ the origin (or the null vector) of any vector space or 
subspace of dimension at least two.

The Euclidean unit closed ball of $\R^q$ centred at the origin is $\bB^q$.
If $\bw$ is a point of $\R^q$, the Euclidean closed ball of radius $r$ and
centre $\bw$ is $\bB^q (\bw,r)$. When $\bw$ is just the origin we write
simply $\bB_r^q$.

The Euclidean unit sphere of $\R^q$ centred at the origin is $\bS^{q-1}$.
The Euclidean sphere of radius $r$ centred at the origin is $\bS_r^{q-1}$.

The Grassmann manifold of vector $k$-planes of $\R^q$ is $\bG(k,q)$. 
We will sometimes write $\bP^{q-1}$ for $\bG(1,q)$. 
We will use the same notation $P$ to mean either a point of $\bG(k,q)$, or the 
corresponding vector subspace of $\R^q$. 

Any real vector space $\R^q$ comes equipped with the Euclidean metric, and
associated scalar product $\la-,-\ra$ and norm $|-|$.
Any vector subspace $E$ of $\R^q$ turns into an Euclidean space when restricting
the Euclidean structure. The unit sphere of $E$ is $\bS(E)$ and the orthogonal
of $E$ in $\R^q$ is $E^\perp$.

Given a sequence $(\bw_k)_k$ of $\R^q$, we will write $\bw_k \to \infty$ to
mean that $|\bw_k| \to +\infty$ as $k$ goes to $+\infty$. 

Given any $C^1$ sub-manifold $S$ of the Euclidean space $\R^q$, the 
tangent bundle $TS$ is a sub-vector bundle of $T\R^q|_S$, therefore is equipped 
with the restriction to $TS$ of the Riemannian metric tensor over $T\R^q|_S$. 

Let $X,Y$ be two connected $C^1$ sub-manifolds of $\R^q$.  
The pair $(X,Y)$ is  \em Whitney $(a)$-regular at the point $\by$ of $Y$, \em 
if (i) $Y$ satisfies \em the frontier condition \em $Y \subset \clos(X)\setminus X$,
and (ii) the following condition holds true: 
\textit{
For any sequence $(\bx_k)_k$ of $X$ such that $\bx_k \to \by$
and $T_{\bx_k} X \, \to \, P \, \in \, \bG(\dim X,q)$, we have that
$\; T_\by Y \, \subset \, P$.
}
%
%
Let $f:X\cup Y \mapsto \R$ be a continuous function such that each 
restriction 
$f|_X$ and $f|_Y$ is $C^1$. The function $f$ satisfies \em Thom condition 
$(\arel)$ at the point $\by$ of $Y$ \em (also called \em Thom 
$(\arel)$-regular\em) if the pair $(X,Y)$ is Whitney 
$(a)$-regular at $\by$ and the following conditions are verified:
(i) the functions $f|_X$ and $f|_Y$ have constant ranks, and 
(ii) 
\textit{
For any sequence $(\bx_k)_k$ of $X$ such that $\bx_k \to \by$ and 
$\ker D_{\bx_k} (f|_X)  \to  T \, \in \, \bG(\dim X-1,q)$, we have that 
that $\ker D_\by (f|_Y)  \subset  T$.
}

Let $a\in \overline\R := [-\infty,+\infty]$. Let $f,g:(I,a) \to \R$ be two 
germs of real functions at $a$, where $I$ is any interval with non-empty interior
whose frontier in $\overline\R$ contains $a$. 
We  use the following notations:
$$
{\rm (i)} \; f \, \sim \, g \; \Longleftrightarrow \; \lim_a \frac{f}{g} 
\in \R^*, \; \; \; 
{\rm (ii)}  \; f \, \simeq \, g \; \Longleftrightarrow \; \lim_a \frac{f}{g} =1,
 \; \; \; \;
{\rm (iii)}  \; |f| \, \ll \, |g| \; \Longleftrightarrow \; 
\lim_a \frac{|f|}{|g|} = 0.
$$
If $f,g :(I,a) \to \R^s$, for $s\geq 2$, we will write 
$$
{\rm (iv)} \; f \, \sim \, g \; \Longleftrightarrow \; |f| \sim |g|, 
\; \; \; \; 
{\rm (v)}  \; f \, \simeq \, g \; \Longleftrightarrow \; |f| \simeq |g|.
$$
%
%
%
%
%
%
%
%
%
%
%
%
%
%
%
%
%
%
%
%
%
%
%
%
%
%
%
%
%
%
%
%
%
%
%
%
%
%
%
%
%
%
%
%
%
\section{On o-minimal structures}\label{ref:o-min}
We collect here a very few definitions and facts about o-minimal structures
and definability we will use in the sequel of this paper 
(see \cite{vdD,vdDM} for a proper treatment). We adopt a point of view 
close to those of \cite{Kur,dAc}. 

\medskip
An \em o-minimal structure $\cM$ expanding the ordered field of real 
numbers $(\R,+,\cdot,>)$ \em  is a collection $(\cM_q)_{q \in \N}$, where each 
$\cM_q$ is a family of subsets of $\R^q$ satisfying the following axioms:

\smallskip
\noindent
1) For each $q\in \N$, the family $\cM_q$ is a boolean sub-algebra 
of subsets of $\R^q$. 
\\
2) For any pair of subsets $A \in \cM_p$ and $B \in \cM_q$, 
then $A \times B \in \cM_{p+q}$. 
\\
3) Let $\pi: \R^{q+1} \to \R^q$ be the projection on the first $q$ factors. 
Given any subset $A$ of $\cM_{q+1}$, its projection $\pi (A)$ is a subset 
lying in $\cM_q$.
\\
4) The algebraic subsets of $\R^q$ belong to $\cM_q$. \\
5) The family $\cM_1$ consists exactly of the finite unions of points and 
intervals.

Those points imply that the smallest o-minimal structure is the 
structure of the semi-algebraic subsets, thus contained in any other one.

\medskip
Assume that such an o-minimal structure $\cM$ is given for the rest of this 
paper. 

\medskip
A subset $A$ of $\R^q$ is a \em definable subset \em shortened as \em 
definable \em(\em{in the given o-minimal structure}\em), if $A \in \cM_q$. 

Let $A$ be a subset of $\R^q$, a mapping $A \mapsto \R^r$ is \em definable \em 
if its graph is a definable subset of $\R^q\times\R^r$. 

\medskip
Let $A$ be a definable subset of $\R^q$ and let $B$ be a 
definable subset of $A$. The pair $(A,B)$ admits a  
\em definable Whitney $(a)$-regular $C^k$ stratification, \em 
namely the following result holds true:

\smallskip\noindent
{\bf Theorem.} (see for instance \cite{Whi,Loj,DeSt,vdDM,NTT}).
\em
For each positive integer $k$, there exists a finite partition of $A$ 
into connected $C^k$ sub-manifolds, called strata with the following 
properties: (i) Each stratum is definable;
(ii) $B$ is a union of strata; (iii) Each pair $(X,Y)$ such that $\dim X > 
\dim Y$ either verifies the frontier condition $Y \subset \clos(X)\setminus 
X$, or the intersection $Y\cap \clos(X)$ is empty; (iv) Each pair of strata 
$(X,Y)$ satisfying the frontier 
condition is Whitney $(a)$-regular at each point of $Y$. \em

\medskip
Let $f:A\to \R$ be a  definable mapping. Let $B$ be a  
definable subset of $A$. The function $f$ admits a 
\em definable $(\arel)$-regular $C^k$ stratification, \em
that is the following statement holds true:

\smallskip\noindent
{\bf Theorem.} (see for instance \cite{Hir,Bek,KuPa,KuRa,Loi2,Loi3}).
\em
For each positive integer $k$, there exists a Whitney $(a)$-regular $C^k$ 
stratification of the pair $(A,B)$ satisfying the following additional property:
(v) For each pair of strata $(X,Y)$, the restriction $f|_{X\cup Y}$ is 
$(\arel)$-regular at every point of $Y$.
\em

\medskip
The notions of definability of subsets or mappings 
germ-ify along any definable subset. 
In particular, the germ of a mapping
$(\R,+\infty) \to \R^s$ is \em definable at $+\infty$ \em if it admits 
a representative which is  definable. 

Let $I,a$ as above.
Let $I^+$ be $(I \cap ]a,+\infty[,a)$ if $a<+\infty$, or $(\R,+\infty)$ when
and $a =+\infty$. Similarly let $I^-$ be $(I\cap ]-\infty,a[,a)$ when 
$a>-\infty$, or $(\R,-\infty)$ if $a = -\infty$. 

We would like to recall the following well known two elementary facts: \em 
Let $\ve \in \{+,-\}$ be such that $I^\ve$ is not empty. Let 
$f:(I^\ve,a) \to \R$ be definable. 

(i) For every non-negative integer number $k$, there 
exists $\cU_k$ an open neighbourhood of $a$ such that $f$ admits a 
$C^k$ representative $I^\ve\cap\cU_k \mapsto \R$.

(ii) There exists $\cU$ an open neighbourhood of $a$ such that $f$ admits a 
 monotonic representative $I^\ve\cap\cU \mapsto \R$.
\em

\medskip
Let $t\mapsto \gm(t) \in \Rn$ be a mapping definable at $+\infty$. 
The differential mapping $t\mapsto\gm'(t) \in\Rn$ is also definable at 
$+\infty$. We define 
$$
\bu(t) :=\frac{\gm(t)}{|\gm(t)|} \in \bS^{n-1}, \; {\rm and} \; 
L(t) := \R\gm'(t) \in \bP^{n-1}
$$ 
to be respectively the secant mapping and tangent direction mapping associated 
with $\gm$, both definable at $+\infty$. 
Definability at $+\infty$ guarantees that the following limits exist
$$
\bu_\gm := \lim_{t\to+\infty} \bu(t), \; {\rm and} \; 
L_\gm := \lim_{t\to+\infty} L(t).
$$
Moreover the following very useful result holds true. 
\begin{lemma}\label{lem:secant-tangent}\cite[Lemmas 2.5-7]{dAc}
1) $L_\gm = \R\bu_\gm$; 
\\
2) Assume furthermore that $|\gamma(t)| = t$ for $t$ large enough. Then
$$
\lim_{t\to+\infty} \gm'(t) = \bu_\gm.
$$
In particular $|\gm'|$ is bounded.
\end{lemma}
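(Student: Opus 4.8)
The plan is to reduce both assertions to the normalization $|\gm(t)|=t$ appearing in 2), and then simply to differentiate the identity $\gm(t)=t\,\bu(t)$; the only substantial ingredient is a decay estimate for the derivative of a bounded definable germ.

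\emph{Reduction to $|\gm(t)|=t$.} We may assume $|\gm(t)|\to+\infty$ (the relevant situation, and the one forced by the hypothesis of 2)). Applying the elementary facts on definable germs recalled above to $r(t):=|\gm(t)|$, pass to a half-line on which both $r$ and $\gm$ are $C^1$, $r$ is strictly increasing, $r'>0$ and $r\to+\infty$; there $\gm$ is not eventually constant, so $\gm'$ is nowhere zero and $L(t)$ is genuinely defined. Let $s\mapsto t(s)$ be the definable inverse of $r$ and set $\sgm(s):=\gm(t(s))$. Then $|\sgm(s)|=s$, $\sgm$ is definable at $+\infty$, $\sgm(s)/s=\bu(t(s))$, and $\sgm'(s)=\gm'(t(s))\,t'(s)$ with $t'(s)=1/r'(t(s))>0$, so $\R\sgm'(s)=L(t(s))$. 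Since $t(s)\to+\infty$ as $s\to+\infty$, we get $\sgm(s)/s\to\bu_\gm$ and $\R\sgm'(s)\to L_\gm$. Hence it suffices to show, for a definable curve $\sgm$ with $|\sgm(s)|=s$, that $\sgm'(s)\to\bu_\gm:=\lim_{s\to+\infty}\sgm(s)/s$: this is exactly 2) (the case $\sgm=\gm$), while 1) then reads off as $L_\gm=\lim\R\sgm'(s)=\R\bu_\gm$, and $|\gm'|$ is bounded near $+\infty$, being continuous there with a finite limit.

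\emph{The normalized computation.} Write $\sgm(s)=s\,\bu(s)$ with $\bu(s):=\sgm(s)/s\in\bS^{n-1}$; by definability and compactness of $\bS^{n-1}$, $\bu(s)\to\bu_\gm$. Differentiating, $\sgm'(s)=\bu(s)+s\,\bu'(s)$, and from $\la\bu(s),\bu'(s)\ra=\tfrac12\big(|\bu(s)|^2\big)'=0$ one even gets $|\sgm'(s)|^2=1+s^2|\bu'(s)|^2$. Everything thus comes down to the estimate $s\,\bu'(s)\to\bbo$. This is the standard o-minimal fact that a bounded definable germ $h:(\R,+\infty)\to\R$ with finite limit $\ell$ satisfies $s\,h'(s)\to 0$: by the recalled facts $h$ is eventually $C^1$ and monotone, so $s\,h'(s)$ is a definable germ and has a limit $\lambda\in\overline\R$; if $|\lambda|>0$ then $|h'(s)|\ge|\lambda|/(2s)$ for $s$ large, and integration forces $|h(s)-\ell|$ to grow at least like a positive multiple of $\log s$, contradicting boundedness of $h$; so $\lambda=0$. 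Applying this coordinatewise to $\bu(s)-\bu_\gm$ gives $s\,\bu'(s)\to\bbo$, whence $\sgm'(s)=\bu(s)+s\,\bu'(s)\to\bu_\gm$, a unit vector. This is the claim we reduced to, so 1) and 2) follow.

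The one genuinely non-formal point is the decay estimate $s\,\bu'(s)\to\bbo$: it really uses o-minimality, through the monotonicity theorem, and fails for general smooth bounded spherical curves (for instance one with a $(\sin s^2)/s$ tail in some direction), so it cannot be replaced by a soft compactness argument. Everything else is the chain rule, the reparametrization by $r=|\gm|$, and the identity $|\bu|^2\equiv 1$.
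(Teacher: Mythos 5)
Your proof is correct. Note first that the paper does not prove this lemma — it quotes it directly from \cite[Lemmas 2.5--7]{dAc} — so there is no in-paper argument to compare against. Your reparametrization by $s=|\gm(t)|$, reducing part 1) to the normalized situation of part 2), is the natural move, and the resulting computation $\sgm'(s)=\bu(s)+s\,\bu'(s)$ isolates exactly the one nontrivial ingredient: the o-minimal decay estimate $s\,\bu'(s)\to\bbo$. That estimate is precisely part (i) of the paper's Lemma~\ref{lem:derivative-infinity}, applied coordinatewise to $\bu(s)-\bu_\gm$; you re-derive it in place with the same integration-versus-monotonicity argument rather than cite it, which is fine. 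Two small remarks. First, you are right to flag that the statement implicitly requires $|\gm(t)|\to+\infty$: for $\gm(t)=(1,1/t)$ one has $\bu_\gm=(1,0)$ while $\gm'(t)=(0,-1/t^2)$ gives $L_\gm=\R(0,1)$, so $L_\gm\neq\R\bu_\gm$; your preliminary reduction is therefore not a cosmetic normalization but the correct reading of the standing hypothesis, which the paper leaves implicit. Second, the phrase ``integration forces $|h(s)-\ell|$ to grow'' is a shade loose: what diverges is $|h(S)-h(s_0)|=\int_{s_0}^{S}|h'|$ for fixed $s_0$ (using that $h'$ has eventually constant sign by o-minimal monotonicity), which contradicts $h(S)\to\ell$; the conclusion $\lambda=0$ and hence $s\,\bu'(s)\to\bbo$ stands.
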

We also recall the following:
\begin{lemma}\label{lem:derivative-infinity}
Let $f :(\Rgo,+\infty) \to\R_{>0}$ be the germ of a function definable at 
$+\infty$ such that $f \to 0$ as $t \to +\infty$.
The following limits hold true
$$
(i) \, \lim_{t\to +\infty} t\cdot f'(t) = 0, \: {\rm and} \; (ii) \, 
\lim_{t\to+\infty} t^{1+\tau}\cdot \frac{f'(t)}{f(t)} = - \infty, 
\; \forall \, \tau >0.
$$
\end{lemma}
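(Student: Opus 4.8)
The plan is to exploit that one-variable germs definable at $+\infty$ are tame: by the monotonicity theorem recalled above, such a germ admits an eventually monotone continuous (indeed $C^k$) representative, hence has a limit in $\overline\R$ at $+\infty$, and its derivative is again definable at $+\infty$. Since the statement involves $f'$, I first fix a $C^1$ (in fact $C^2$) representative of $f$ on some $(T_0,+\infty)$. As $f'$ is then definable at $+\infty$, one of the sets $\{f'>0\}$, $\{f'<0\}$, $\{f'=0\}$ contains a punctured neighbourhood of $+\infty$; because $f>0$ and $f\to 0$, neither $f'\geq 0$ near $+\infty$ nor $f'\equiv 0$ near $+\infty$ is possible, so $f'<0$ and $f$ is strictly decreasing near $+\infty$.

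For (i) I set $\ell:=\lim_{t\to+\infty} t\,f'(t)$, which exists in $\overline\R$ since $t\mapsto t\,f'(t)$ is definable at $+\infty$, and I exclude $\ell\neq 0$. If $\ell>0$, there is $c>0$ with $f'(t)\geq c/t$ for all large $t$, so by the fundamental theorem of calculus $f(2t)-f(t)=\int_t^{2t} f'(s)\,\ud s\geq c\log 2$ for $t\gg 1$, forcing $f$ to be unbounded above and contradicting $f\to 0$. Symmetrically, if $\ell<0$ then $f(2t)-f(t)\leq -c\log 2$ for $t\gg 1$, forcing $f$ to take negative values and contradicting $f>0$. Hence $\ell=0$.

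For (ii) I first reduce to $\tau\in\Q_{>0}$: given any $\tau>0$, choosing $\tau'\in\Q$ with $0<\tau'<\tau$ and writing, for $t\geq 1$,
$$
t^{1+\tau}\,\frac{f'(t)}{f(t)}=t^{\tau-\tau'}\cdot\Bigl(t^{1+\tau'}\,\frac{f'(t)}{f(t)}\Bigr),
$$
one sees, since $t^{\tau-\tau'}\to+\infty$, that it suffices to treat rational exponents. For $\tau\in\Q_{>0}$ the germ $t\mapsto t^{1+\tau}f'(t)/f(t)$ is definable at $+\infty$, hence has a limit $\ell'\in\overline\R$, with $\ell'\leq 0$ because $f'<0$ and $f>0$. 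Suppose $\ell'$ is finite. Then $|f'(t)|/f(t)\leq C\,t^{-1-\tau}$ for $t\geq T$ and some $C>0$; writing $(1/f)'=(|f'|/f)\cdot(1/f)$ and using $1/f(s)\leq 1/f(t)$ for $T\leq s\leq t$ (as $f$ decreases), the fundamental theorem of calculus gives
$$
\frac1{f(t)}-\frac1{f(T)}=\int_T^t\frac{|f'(s)|}{f(s)}\cdot\frac1{f(s)}\,\ud s\leq\frac1{f(t)}\int_T^t C\,s^{-1-\tau}\,\ud s\leq\frac{C}{\tau\,T^{\tau}}\cdot\frac1{f(t)}.
$$
Enlarging $T$ so that $C/(\tau T^{\tau})\leq\tfrac12$ yields $f(t)\geq f(T)/2>0$ for all $t\geq T$, contradicting $f\to 0$. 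Hence $\ell'=-\infty$, and the reduction above completes (ii).

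The substance of the argument is that the relevant germs, namely $t\mapsto t\,f'(t)$ and $t\mapsto t^{1+\tau}f'(t)/f(t)$ for rational $\tau$, are definable at $+\infty$, so they automatically possess limits in $\overline\R$ and one only has to exclude the wrong limiting values by integrating $f'$ against the condition $f\to 0$. The one point requiring care is staying inside the o-minimal structure: $f'$ and $f'/f$ are definable, but $t\mapsto t^{\tau}$ need not be for irrational $\tau$, which is exactly why the passage to a rational exponent in (ii) is performed before invoking the monotonicity theorem.
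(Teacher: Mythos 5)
Your proof is correct and follows the same route the paper sketches: use definability and eventual monotonicity of the relevant one-variable germs to guarantee each limit exists in $\overline\R$, then rule out the wrong limiting values by integrating $f'$ (respectively $|f'|/f$) and contradicting $f\to 0$, $f>0$. You are also right to reduce (ii) to rational $\tau$ before invoking definability of $t\mapsto t^{1+\tau}f'(t)/f(t)$; the paper's one-line sketch glosses over the point that $t\mapsto t^{\tau}$ need not be definable for irrational $\tau$ in an arbitrary o-minimal expansion of the real field.
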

\begin{proof}
We recall that ultimately a function definable at $+\infty$ is monotonic.
In both cases an integrability argument near $+\infty$ combined with 
contradicting the desired statement yields the proof.
\end{proof}

\medskip\noindent
{\bf Our Setting:}
Let $W$ be a $C^2$ connected and closed sub-manifold of $\R^n\times\R^s$,
 definable in $\cM$. 
The restriction of the Euclidean metric onto $TW$ induces a  definable 
$C^1$ Riemannian metric over $W$.
We further require that
\begin{center}
$\dim W \geq s .$
\end{center}
Let $\vp : W\mapsto\R^s$ be the restriction to $W$ of the projection 
on $\R^s$, thus $\vp$ is a $C^2$ and  definable mapping.

%
%
%
%
%
%
%
%
%
%
%
%
%
%
%
%
%
%
%
%
%
%
%
%
%
%
%
%
%
%
%
%
%
%
%
%
%
%
\section{Some elementary linear algebra}\label{section:linear-algebra}
Let $V$ be a real vector space of dimension $q$.

Let $\bG$ be the union $\cup_k \bG(k,V)$, the total Grassmann space of $V$.

We assume that $V$ is equipped with a scalar product $\la-,-\ra$. Let $|-|$
be the associated norm. Any vector subspace $E$ of $V$ is equipped with
the restriction of the scalar product, to yield a scalar product on $E$. Let 
$\bS(E)$ be the unit sphere of $E$.

Let $E$ be a vector subspace of $V$ and let $\bu$ be a unit vector of $V$.
\em The distance of $\bu$ to $E$ \em is defined as
$$
\dlt_q(\bu,E) := \max\{|\la \bu,\bv\ra| \, : \, \bv \in \bS(E^\perp)\}.
$$
This function $(\bu,E)\mapsto \dlt_q(\bu,E)$ is semi-algebraic over 
$\bS(V)\times\bG$. 
\\
Let $E,F$ be vector subspaces of $V$.  \em The distance from $E$ to $F$ \em
is defined as $$
\dlt_q(E,F) := \max \{\dlt_q(\bu,F) : \bu\in\bS(E)\}.
$$
The function $(E,F) \to \dlt_q(E,F)$ is also semi-algebraic in its entries in 
$\bG\times\bG$. It is not symmetric in $E$ and $F$. Yet its restriction to 
$\bG(k,V)\times\bG(k,V)$ yields a distance on $\bG(k,V)$.

\medskip
Let $\cL(p,q)$ be the space of linear mappings from $\Rp$ to $\R^q$.
It is equipped with its Euclidean norm.

Let $\Sigma(p,q)$ be the algebraic subset of $\cL(p,q)$ consisting of all 
linear maps that are not surjective. 

For $A$ a linear operator of $\cL(p,q)$, let $A^* \in 
\cL(q,p)$ be the adjoint operator of $A$. 
\begin{definition}\label{def:rabier}
Let $A \in \cL(p,q)$. The \em Rabier number of the operator $A$ \em is 
defined as 
\begin{equation}\label{eq:rabier-def}
\nu(A) := \inf_{|\vp| = 1} |A^*(\vp)|. 
\end{equation}
\end{definition}
As can be seen in \cite{Rab,KOS,Jel1}, we know the following result:
\begin{proposition}\label{prop:rabier-equiv}
Let $A\in\cL(p,q)$. We have
$$
\inf_{|\vp| = 1} |A^*(\vp)| =
\sup\{r>0 \, : \, \bB^q(\bbo,r) \subset A\cdot \bB^p(\bbo,1)\}
=  \dist(A,\Sgm(p,q)).
$$
\end{proposition}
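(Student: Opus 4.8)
The plan is to prove the two equalities in Proposition~\ref{prop:rabier-equiv} separately, working in a fixed orthonormal basis so that $A^*$ is literally the transpose and the norm on $\cL(p,q)$ is the one induced by the Euclidean norms on $\R^p$ and $\R^q$. Throughout I treat the trivial case first: if $A \notin \Sigma(p,q)$ is false, i.e. if $A$ is not surjective, then all three quantities vanish (the infimum is $0$ by choosing $\vp$ a unit vector orthogonal to $A\cdot\R^p$; the supremum is $0$ since $A\cdot\bB^p$ has empty interior; the distance to $\Sigma(p,q)$ is $0$ since $A\in\Sigma(p,q)$), so I may assume $A$ is surjective.

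\medskip\noindent\textbf{First equality: $\nu(A) = \sup\{r : \bB^q(\bbo,r)\subset A\cdot\bB^p(\bbo,1)\}$.}
First I would observe that $A\cdot\bB^p(\bbo,1)$ is a closed convex symmetric set (image of a compact convex symmetric set), and that, since $A$ is surjective, it has nonempty interior and is a neighbourhood of $\bbo$; hence the supremum is a genuine positive number, call it $\rho(A)$, and in fact $\bB^q(\bbo,\rho(A))\subset A\cdot\bB^p(\bbo,1)$. For the inequality $\rho(A)\geq\nu(A)$: I would show that if $|\by| < \nu(A)$ then $\by = A\bx$ for some $\bx$ with $|\bx|\leq 1$. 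Write $A = U\,[D\ 0]\,V$ in a singular value decomposition; then $A^*\vp$ has norm at least $\nu(A)$ for all unit $\vp$ exactly means all singular values of $A$ are $\geq\nu(A)$, so the minimal-norm preimage $\bx = A^*(AA^*)^{-1}\by$ satisfies $|\bx| = |(AA^*)^{-1/2}\by|\cdot(\text{something})$; more cleanly, $|\bx|^2 = \la (AA^*)^{-1}\by,\by\ra \leq \nu(A)^{-2}|\by|^2 < 1$ using that the eigenvalues of $AA^*$ are the squared singular values, all $\geq\nu(A)^2$. For the reverse inequality $\rho(A)\leq\nu(A)$: given a unit $\vp$ realizing (or approaching) the infimum, the point $\by := \nu(A)\,\vp$ — or $\by := r\vp$ for $r<\rho(A)$ — should fail to satisfy $|A^*\psi|\geq$ the required bound for a separating functional argument. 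Concretely, for any unit $\vp$, the linear functional $\la\vp,\cdot\ra$ on $A\cdot\bB^p(\bbo,1)$ attains maximum $\sup_{|\bx|\leq 1}\la\vp,A\bx\ra = |A^*\vp|$, so $A\cdot\bB^p(\bbo,1)$ is contained in the slab $\{|\la\vp,\by\ra|\leq|A^*\vp|\}$; intersecting over all $\vp$ gives $A\cdot\bB^p(\bbo,1)\subset\{\by : |\la\vp,\by\ra|\leq|A^*\vp|\ \forall\vp\}$, and picking $\vp_0$ with $|A^*\vp_0|$ minimal forces any $\by$ in the image ball to have $|\la\vp_0,\by\ra|\leq\nu(A)$, hence $\rho(A)\leq\nu(A)$. (This last containment is an equality by convexity/bipolar considerations, but only one direction is needed.)

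\medskip\noindent\textbf{Second equality: $\nu(A) = \dist(A,\Sigma(p,q))$.}
For $\dist(A,\Sigma(p,q))\leq\nu(A)$: take a unit $\vp_0$ with $|A^*\vp_0| = \nu(A)$ and set $B := A - \vp_0(A^*\vp_0)^T$, i.e. $B\bx = A\bx - \la A^*\vp_0,\bx\ra\,\vp_0$. Then $B^*\vp_0 = A^*\vp_0 - |A^*\vp_0|^2\cdots$ — one checks $\vp_0^T B = 0$, so $\vp_0 \perp B\cdot\R^p$ and $B\in\Sigma(p,q)$, while $\|A - B\| = \|\vp_0(A^*\vp_0)^T\| = |A^*\vp_0| = \nu(A)$ (rank-one operator norm). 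For $\dist(A,\Sigma(p,q))\geq\nu(A)$: let $B\in\Sigma(p,q)$ be arbitrary and pick a unit $\vp$ with $B^*\vp = 0$ (possible since $B$ is not surjective). Then $\nu(A)\leq|A^*\vp| = |A^*\vp - B^*\vp| = |(A-B)^*\vp| \leq \|(A-B)^*\| = \|A - B\|$; taking the infimum over $B$ gives the claim. Combining the two inequalities finishes this part, and the two displayed equalities together prove the proposition.

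\medskip
I expect the main obstacle to be the careful handling of the supremum equality, specifically verifying that $A\cdot\bB^p(\bbo,1)$ is actually \emph{closed} (so the supremum is attained and the ball is contained, not just its interior) and pinning down the direction $\rho(A)\leq\nu(A)$ without invoking the full bipolar theorem — the separating-hyperplane slab argument above is the cleanest route and avoids any SVD bookkeeping, whereas the direction $\rho(A)\geq\nu(A)$ is most transparent via singular values; juggling these two viewpoints consistently is the only real subtlety. Everything in the second equality is then a short rank-one-perturbation computation. Since all objects ($\nu$, the image ball, $\Sigma(p,q)$) are semi-algebraic, one could alternatively just cite \cite{Rab,KOS,Jel1} verbatim, which is what the statement already does; the proof sketch above is what I would write if a self-contained argument were wanted.
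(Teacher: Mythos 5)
The paper does not prove Proposition \ref{prop:rabier-equiv}; it states it as a known fact and refers to \cite{Rab,KOS,Jel1}. Your argument is a correct self-contained substitute, and it is organized sensibly: the SVD/normal-equations computation gives $\rho(A)\geq\nu(A)$ cleanly, the supporting-slab observation $A\cdot\bB^p(\bbo,1)\subset\{\by:|\la\vp,\by\ra|\leq|A^*\vp|\}$ gives $\rho(A)\leq\nu(A)$ without any bipolar machinery, and the rank-one perturbation $B\bx = A\bx - \la A^*\vp_0,\bx\ra\vp_0$ together with the $B^*\vp=0$ trick handles both inequalities for $\dist(A,\Sigma(p,q))$. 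I checked the details (e.g.\ $|A^*(AA^*)^{-1}\by|^2=\la(AA^*)^{-1}\by,\by\ra$, that $\vp_0\perp B\cdot\R^p$, and that $A\cdot\bB^p(\bbo,1)$ is compact hence closed), and they are all fine.

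One point worth making explicit, since the paper equips $\cL(p,q)$ with the \emph{Euclidean} (Frobenius) norm rather than the operator norm: your two inequalities are stated with $\|\cdot\|$ unspecified, and both happen to be valid in either norm. The lower bound uses $|(A-B)^*\vp|\leq\|A-B\|$, which holds because the Frobenius norm dominates the operator norm; the upper bound uses that a rank-one operator $\bx\mapsto\la\bv,\bx\ra\bu$ has norm $|\bu|\cdot|\bv|$, which is the same in operator and Frobenius norms. So the argument proves the Frobenius-norm statement the paper actually needs (this is the $k=q-1$ case of Eckart--Young, where the two norms coincide because only one singular value is dropped), but a reader might momentarily worry you have only proved the operator-norm version; a sentence flagging this would make the proof airtight on first reading.
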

Let $A$ be any linear map of $\cL(p,q)$. 
Whenever the operator $A$ is not surjective, we get $\nu(A) = 0$. 
For any real number $\lbd$ we also get 
$$
\nu(\lbd A) = |\lbd| \nu(A).
$$
Let $N(A)$ be the orthogonal space of $\ker A$ in $\Rp$.
Although obvious it is worth singling out the following
\begin{equation}\label{eq:rabier-normal}
\nu(A) = \nu(A|_{N(A)}) \geq \nu(A|_P)
\end{equation}
for every subspace $P$ of $\Rp$. 
\\
In order to work with families rather than mappings, 
the following elementary result is key to this approach.
\begin{lemma}\label{lem:nu-projection}
Let $A \in \cL(p,q)$ and let $V$ be the graph of $A$, subspace of 
$\Rp\times\R^q$. Let $\vp:V \to\R^q$ be the restriction to $V$ of the 
canonical projection $\Rp\times\R^q \to \R^q$.

\smallskip\noindent
1) If $\nu(A)  >0$, then 
\begin{equation}\label{eq:nuA-SNA}
\nu(A) := \min \{|A\cdot\bu| : \bu \in \bS(N(A))\}.
\end{equation}

\noindent
2) The following is true
\begin{equation}\label{eq:nu-projection}
\nu(\vp) =  \frac{\nu(A)}{\sqrt{1+\nu(A)^2}}.
\end{equation} 
\end{lemma}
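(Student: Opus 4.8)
The plan is to prove the two assertions of Lemma \ref{lem:nu-projection} by direct computation, exploiting the explicit description of $V$ as a graph.

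\medskip\noindent
\textbf{Step 1: reduction to the normal space.} First I would recall that $\nu(A) = \nu(A|_{N(A)})$ by \eqref{eq:rabier-normal}, and that $A|_{N(A)}:N(A)\to\R^q$ is injective. Assuming $\nu(A)>0$ means $A$ is surjective (otherwise $\nu(A)=0$ by Proposition \ref{prop:rabier-equiv} or the remark following it), so $A|_{N(A)}$ is a linear isomorphism onto $\R^q$. For such an isomorphism $B:=A|_{N(A)}$, Proposition \ref{prop:rabier-equiv} gives $\nu(B)=\sup\{r>0:\bB^q(\bbo,r)\subset B\cdot\bB(N(A))\}$ where $\bB(N(A))$ is the closed unit ball of $N(A)$; since $B$ is invertible this supremum equals $\inf\{|B\bu|:\bu\in\bS(N(A))\}=1/\|B^{-1}\|$. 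This proves \eqref{eq:nuA-SNA}. (Alternatively one invokes $\nu(B)=\dist(B,\Sgm)$ and notes the nearest non-injective operator to an isomorphism is at distance equal to the smallest singular value, which is $\inf_{|\bu|=1}|B\bu|$.)

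\medskip\noindent
\textbf{Step 2: identifying $\ker\vp$ and $N(\vp)$ inside $V$.} The graph is $V=\{(\bx,A\bx):\bx\in\Rp\}$, and $\vp(\bx,A\bx)=A\bx$, so $\ker\vp=\{(\bx,\bbo):\bx\in\ker A\}=\ker A\times\{\bbo\}$. I then compute its orthogonal complement $N(\vp)$ inside $V$ (with the Euclidean metric of $\Rp\times\R^q$ restricted to $V$): an element $(\bx,A\bx)$ is orthogonal to $(\bz,\bbo)$ for all $\bz\in\ker A$ iff $\la\bx,\bz\ra=0$ for all such $\bz$, i.e. iff $\bx\in N(A)$. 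Hence $N(\vp)=\{(\bx,A\bx):\bx\in N(A)\}$, the graph of $A|_{N(A)}$.

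\medskip\noindent
\textbf{Step 3: computing $\nu(\vp)$.} Apply Step 1 to $\vp$: since $\nu(A)>0$ (the case $\nu(A)=0$ being the degenerate one I would treat separately, or note the formula extends by continuity), $\vp$ is surjective and $\nu(\vp)=\inf\{|\vp(\bw)|:\bw\in\bS(N(\vp))\}$. Parametrize $\bw\in N(\vp)$ as $(\bx,A\bx)$ with $\bx\in N(A)$; then $|\bw|^2=|\bx|^2+|A\bx|^2$ and $|\vp(\bw)|=|A\bx|$. Writing $\bx=t\bu$ with $\bu\in\bS(N(A))$ and $t>0$, the unit-sphere condition $|\bw|=1$ gives $t^2(1+|A\bu|^2)=1$, so
$$
|\vp(\bw)| = \frac{|A\bu|}{\sqrt{1+|A\bu|^2}}.
$$
The function $r\mapsto r/\sqrt{1+r^2}$ is strictly increasing on $\Rgo$, so the infimum over $\bu\in\bS(N(A))$ is attained where $|A\bu|$ is minimal, namely at $|A\bu|=\nu(A)$ by \eqref{eq:nuA-SNA}. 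This yields $\nu(\vp)=\nu(A)/\sqrt{1+\nu(A)^2}$, which is \eqref{eq:nu-projection}.

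\medskip
The computation is entirely elementary; the only genuinely delicate point is making sure the reduction in Step 1 is airtight (that for an isomorphism the Rabier number is the least singular value, not something subtler), and being careful about the degenerate case $\nu(A)=0$, where $\vp$ fails to be surjective and both sides of \eqref{eq:nu-projection} vanish. I expect Step 1 to be the main thing requiring care, since everything after it is bookkeeping with the graph parametrization.
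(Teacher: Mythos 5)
Your proof is correct and follows essentially the same route as the paper: reduce to the normal space $N(A)$, parametrize the unit sphere of $N(\vp)$ (the paper introduces the auxiliary set $S_A:=\{\bu\in N(A):|\bu|^2+|A\bu|^2=1\}$ and a function $a$, which is equivalent to your $\bx=t\bu$ scaling), and conclude via the monotonicity of $t\mapsto t/\sqrt{1+t^2}$. The only real difference is that you spell out part 1), which the paper dismisses as "straightforward."
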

\begin{proof} 
Point 1) is straightforward.

\smallskip\noindent
For point 2), observe that $\nu(\vp) = 0$ if and only if 
$\nu(A) =0$. 
\\
We thus assume that $\nu(A)$ is positive.
Working with $N(A)$ instead of $\Rp$ does not change
the value of  $\nu(A)$. We can thus assume that 
$V$ is the graph of $A|_{N(A)}$.

\medskip
Let us consider the following real algebraic subset of $N(A)$
$$
S_A := \{\bu \in N(A) : |\bu|^2 + |A\cdot \bu|^2 = 1 \}.
$$
The subset $S_A$ is compact and does not contain $\bbo$.
Consider the following continuous semi-algebraic mapping
$$
a : S_A \to \R, \;\; \bu \mapsto \frac{|A\cdot\bu|}{|\bu|} .
$$
Since the radial projection of $S_A$ over $\bS(N(A))$ is a homeomorphism
,
the image of $a$ is the interval $[\nu(A),\Vert A\Vert]$.
For $\bu\in S_A$ we deduce that
$$
|\bu| = \frac{1}{\sqrt{1+a(\bu)^2}},\;\; {\rm and} \;\; |A\bu| = 
\frac{a(\bu)}{\sqrt{1+a(\bu)^2}}.
$$
By definitions of $\vp$ and of $\nu(\vp)$ we get 
$$
\nu(\vp) = \min \{|\bv| : (\bu,\bv) \in \bS(V)\} = \min\{|A\cdot\bu| : 
\bu \in S_A\}.
$$
Since the function $t \mapsto \frac{t}{\sqrt{1+t^2}}$ is increasing, we 
obtain 
$$
\nu(\vp) = \frac{\nu(A)}{\sqrt{1+\nu(A)^2}}.
$$
\end{proof}

%
%
%
%
%
%
%
%
%
%
%
%
%
%
%
%
%
%
%
%
%
%
%
%
%
%
%
%
%
%
%
%
%
%
%

\section{Blowing-up at infinity}\label{section:t-reg}
We need to 
describe conveniently in our setting (and for our purposes) a neighbourhood 
of infinity.

\medskip
Let $\bI:= ]0,+\infty[$.
Let $\bM := \bI\times \bS^{n-1}$. 
Given the point $\bm =(\br,\bu) \in \bI \times \bS^{n-1}$, we observe that
$T_\bm \bM = \R\times T_\bu\bS^{n-1}$. 

\medskip
The spherical blowing-up of $\R^n$ at infinity 
is the following Nash diffeomorphism, defined as 
$$
\beta \, :  \, \bM \, \to \, \Rn\setminus \bbo, \;\;  \bm  \mapsto \bx = 
\frac{\bu}{\br}.
$$
The mapping $\beta$ provides a single chart to investigate the behaviour of 
any mapping, with source a non-bounded domain
of $\Rn$, nearby the boundary part  of the domain lying at infinity.

\medskip
Let $\bMbar := [0,+\infty[\times \bS^{n-1}$. It is a semi-algebraic subset of 
$\R\times\R^n$ and is a sub-manifold, with smooth and algebraic boundary 
$$
\bM^\infty := 0\times\bS^{n-1}.
$$
We compactify $\Rn$ as the Nash manifold with boundary 
$\Rnbar :=\bMbar \sqcup \bbo = \Rn\sqcup
\bS^{n-1}$ identifying $\Rn\setminus\bbo$ with $\bM$ via the mapping $\beta$.
We furthermore define 
$$
\bM_s := \bM \times \Rs, \;\; \bMbar_s:=\bMbar \times\Rs, \;\;
{\rm and} \;\;\bM_s^\infty := \bM^\infty\times\Rs.
$$
Let $\bG(k,\bMbar_s)$ be the Grassmann bundle of subspaces of rank $k$ of 
$T\bMbar_s$.
\\
Let $\clos(Z)$ be the closure in $\bMbar_s$ of the $C^1$ connected 
definable sub-manifold $Z$ of $\bM_s$. Let 
$$
Z^\infty := \clos(Z) \cap \bM_s^\infty.
$$
Let $g:Z \to \R$ be a definable $C^1$ function. We define the 
\em relative tangent bundle of $g$ over $\clos(Z)$ \em as follows
$$
\cT_g  :=\clos \{(\bz,T_\bz g ) \in 
\bG (\dim Z-1,\bMbar_s)|_{Z\setminus  \crit(g)} \} \subset \bG(\dim Z -1,
\bMbar_s)|_{\clos(Z)},
$$
where $T_\bz g$ is the tangent space to the level of $g$ through the point $\bz$.
Note that $\cT_g$ is closed and definable in $\bG (\dim Z - 1,\bMbar_s)$. 
Let $\pi : \bG(\dim Z - 1,\bMbar_s) \to \bMbar_s$ be the projection $(\bz,T)
 \mapsto \bz$ on the base of the Grassmann bundle. 
The fibre $\cT_{g,\bz}$ of $\cT_g$ over $\bz \in \bMbar_s$ is 
the subset of $\bG(\dim Z -1,T_\bz\bMbar_s)$ defined as
$$
\bz \times \cT_{g,\bz} := \cT_g \cap \pi^{-1}(\bz).
$$
\begin{definition}
The \em relative tangent bundle of $g$ over $\clos(Z)$ at infinity \em is 
defined as 
$$
\cT_g^\infty := \pi^{-1}(Z^\infty) \cap \cT_g.
$$
For any $\bz \in Z^\infty$, let $(\cT_g^\infty)_\bz$ be 
the fibre of $\cT_g$ above $\bz$.
\end{definition}

Let $\br_Z :Z\to \R$ be the $C^1$ definable function given by the 
restriction of $\br$ to $Z$, that is for $\bz = (\br,\bu,\by) \in \bMbar_s$ 
we find $\br_Z(\bz) = \br$. 


\medskip
The behaviour nearby $Z^\infty$ of the function $\br_Z$ contains some
information about the accumulation of $Z$ onto $Z^\infty$.
The notion of $\bt$-regularity \cite{ST,Tib99} is 
about the nature of the relative conormal space at the divisor at infinity
of a projective compactification of the graph of a complex or real polynomial 
function. In our present and most general real and definable setting, 
the analogue of the divisor at infinity would be the "boundary subset" 
$Z^\infty$. Using the Rabier number, it is much clearer and easier to present 
our avatar of the notion of $\bt$-regularity in terms of limits of tangent 
spaces (see Definition \ref{def:regular-infinity}). We introduce the 
following mapping
$$
\beta_s := \beta\times {\rm id}_{\Rs} : \bM\times\R^s \to \R^n\times\Rs.
$$
For any limit of tangent spaces $T$ of $(\cT_{r_\bz}^\infty)_\bz$, 
with $\bz = (0,\bu,\bc)$, the projection of 
$T$ onto $\Rs$ means the restriction to $T$ of the projection of
$T_\bz\bMbar_s = T_{(0,\bu)}\bMbar\times T_\bc\Rs$ onto the the second
factor $T_\bc\Rs$ of this product.

\begin{definition}\label{def:regular-infinity}
Let $W$ be a closed connected definable $C^1$ sub-manifold of 
$\Rn\times \Rs$ of dimension $d\geq s$. Let $Z:=\beta_s^{-1}(W)$.

The subset $W$ is \em $\bt$-regular at the value $\bc$ \em  if
for any $\bz\in Z^\infty\cap \bMbar\times \bc$ and any 
$T\in \cT_{\br_Z,\bz}^\infty$ the projection mapping $T \to \Rs$ is 
surjective.

If $W$ is the graph of $F:X \to\Rs$, the mapping $F$ is \em $\bt$-regular
 at the value $\bc$ \em  if $W$ is.
\end{definition}
When working with a polynomial or semi-algebraic $C^1$ mapping $\Rn\mapsto\Rs$,
observe that \cite[Definition 2.10]{Tib99} and \cite[Definition 2.5]{DRT}
are somehow equivalent to our Definition \ref{def:regular-infinity}.

\bigskip
Equipping $[0,+\infty[$ and $\bS^{n-1}$ with their respective 
Euclidean metric, the space $\bMbar$ inherits this metric product structure,
and so does $\bMbar_s$.
\\
If $\bm =(\br,\bu,\bc)$ is any point of $\bMbar_s$ then $T_\bm\bMbar_s = 
\R \times T_\bu\bS^{n-1} \times T_\bc\Rs$. For any given subspace $T$  of 
$T_\bm\bMbar$, let $L:T\mapsto \Rs = T_\bc\Rs$ be the projection of
$T$ onto $\Rs$. The Rabier number $\nu_s(L)$ of $L$ is defined w.r.t the product 
metric on $\bMbar_s$ and the canonical one on $\Rs$.
Let $\pi_s(\bm)$ be the projection of $T_\bm\bMbar_s$ onto $T_{\bc}\Rs$.

\smallskip\noindent
We get the following useful and quantified characterization of $\bt$-regularity.
\begin{lemma}\label{lem:reg-inf-rabier}
The sub-manifold $W =\beta_s(Z)$
is $\bt$-regular at the regular value $\bc$ if and only if
there exists $\nu_\bc >0$ such for every $\bz\in Z^\infty\cap\bM^\infty\times\bc$
and every $T \in \cT_{r_Z,\bz}^\infty$ we have
$$
\nu_s\left(\pi_s(\bz)|_T\right)\geq \nu_\bc.
$$
\end{lemma}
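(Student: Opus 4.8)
\textbf{Proof plan for Lemma \ref{lem:reg-inf-rabier}.}

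The plan is to unwind both sides of the claimed equivalence into a statement about a single definable function on a compact definable set and then apply a standard compactness/Łojasiewicz-type argument. First I would observe that $\bt$-regularity at $\bc$ (Definition \ref{def:regular-infinity}) says exactly that for every $\bz \in Z^\infty \cap \bMbar\times\bc$ and every $T \in \cT_{\br_Z,\bz}^\infty$ the projection $T \to \Rs$ is surjective; by Proposition \ref{prop:rabier-equiv} surjectivity of a linear map $L$ is equivalent to $\nu(L) > 0$, so the point-wise content of the two conditions agrees. Thus the only thing to prove is that the \emph{pointwise} positivity of $\nu_s(\pi_s(\bz)|_T)$ over all such $(\bz,T)$ can be upgraded to a \emph{uniform} lower bound $\nu_\bc > 0$. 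The implication "$\exists \nu_\bc>0$" $\Rightarrow$ "$\bt$-regular" is then immediate, so the work is all in the other direction.

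The key steps would be: (1) Fix $\bc$ and consider the set $E_\bc := \cT_{\br_Z}^\infty \cap \pi^{-1}(\bMbar\times\bc)$ inside the Grassmann bundle $\bG(\dim Z-1,\bMbar_s)$; since $\cT_{\br_Z}^\infty$ is closed and definable and $\bM^\infty\times\bc$ is compact (it is $\{0\}\times\bS^{n-1}\times\bc$, hence compact, and $Z^\infty$ is a closed definable subset of it), $E_\bc$ is a compact definable set. (2) Define $\eta : E_\bc \to \Rgo$ by $\eta(\bz,T) := \nu_s(\pi_s(\bz)|_T)$. The Rabier number of a linear map depends continuously (indeed semi-algebraically) on the map by Proposition \ref{prop:rabier-equiv} ($\nu = \dist(\cdot,\Sgm)$), and $(\bz,T)\mapsto \pi_s(\bz)|_T$ is continuous on the Grassmann bundle; hence $\eta$ is continuous and definable on the compact set $E_\bc$. (3) $\bt$-regularity at $\bc$ says $\eta > 0$ everywhere on $E_\bc$; a continuous positive function on a nonempty compact set attains a positive minimum, which we call $\nu_\bc$. (If $E_\bc$ is empty the statement is vacuous with any $\nu_\bc$.) Conversely, if such a $\nu_\bc>0$ exists then $\eta>0$ on $E_\bc$, so every projection $\pi_s(\bz)|_T$ is surjective, i.e. $W$ is $\bt$-regular at $\bc$.

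I would also need to reconcile the two formulations of the fibre appearing in the two conditions: Definition \ref{def:regular-infinity} ranges over $\bz \in Z^\infty\cap \bMbar\times\bc$ and $T \in \cT_{\br_Z,\bz}^\infty$, while the Lemma ranges over $\bz \in Z^\infty\cap\bM^\infty\times\bc$; but $Z^\infty = \clos(Z)\cap\bM_s^\infty$ already lies in $\bM^\infty\times\Rs$, so $Z^\infty\cap\bMbar\times\bc = Z^\infty\cap\bM^\infty\times\bc$ and the two index sets coincide. The main obstacle, and the only genuinely non-formal point, is verifying the continuity of $(\bz,T)\mapsto \nu_s(\pi_s(\bz)|_T)$ on the Grassmann bundle: one must check that the projection-onto-$\Rs$ operation varies continuously as $T$ moves in $\bG(\dim Z-1,\bMbar_s)$ — most cleanly done by choosing local orthonormal frames of the tautological subbundle over a chart of the Grassmann bundle, expressing $\pi_s(\bz)|_T$ as a matrix depending continuously on the frame and on $\bz$, and invoking the identity $\nu(A) = \dist(A,\Sgm)$ from Proposition \ref{prop:rabier-equiv} together with continuity of the distance function. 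Everything else is compactness and the elementary equivalence "surjective $\Leftrightarrow$ Rabier number positive".
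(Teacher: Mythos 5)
Your proposal is correct and follows essentially the same route as the paper's proof, which is also a compactness argument: the paper invokes compactness and definability of $Z^\infty\cap\bM^\infty\times\bc$ together with continuity and definability of $\bz\mapsto\nu_s(\pi_s(\bz)|_{T_\bz\br_Z})$ and the definition of $\bt$-regularity, exactly to upgrade pointwise positivity of the Rabier number to a uniform lower bound. Your version is somewhat more explicit in working directly with the compact set $E_\bc$ inside the Grassmann bundle rather than with a function on $Z$ whose limits at infinity one must track, which cleanly handles the possibility that $\cT_{\br_Z,\bz}^\infty$ contains several limit planes over a single boundary point $\bz$.
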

\begin{proof}
It is a straightforward consequence of the following ingredients: $Z^\infty 
\cap \bM^\infty \times \bc$ is definable and compact, the mapping $\bz \mapsto 
\nu_s(\pi_s(\bz)|_{T_\bz\br_Z})$ is definable and continuous, the projection 
of $(\dim W -1)$-planes of $\Rn\times\Rs$ onto $\Rs$ is continuous, and the
definition  of $\bt$-regularity.
\end{proof}

%
%
%
%
%
%
%
%
%
%
%
%
%
%
%
%
%
%
%
%
%
%
%
%
%
%
%
%
%
%
%
%
%
\section{Rabier number, Malgrange-Rabier condition}\label{section:rabier}

Let $\bw := (\bx,\by)$ be the Euclidean coordinates of $\R^n\times\R^s$.
Let $W$ be a closed connected definable $C^2$ definable sub-manifold of 
$\Rn\times \Rs$ of dimension $d\geq s$ and let $\vp:W \to \Rs$ be the
projection $(\bx,\by) \mapsto \by$. We 
introduce the following (see \cite{Rab,KOS}):
\begin{definition}\label{def:Malgrange-Rabier}
The mapping $\vp$ satisfies the \em Malgrange-Rabier condition $(MR)$ at 
the value $\bc\in\Rs$ \em if there exists a positive constant $L$ such that
\begin{equation}\label{eq:malgrange-rabier}
|\vp(\bw) - \bc| \, \ll \, 1 \; {\rm and} \; |\bw| \, \gg \,1\,  
\Longrightarrow |\bw| \cdot \nu(D_{\bw}\vp) \geq L.
\end{equation}
We will also say that $\vp$ is \em (MR)-regular at $\bc$. \em
\end{definition}
In Definition \ref{def:Malgrange-Rabier}, the mapping $\vp$ tends to
a finite value $\bc$, therefore the asymptotic phenomenon at 
infinity only occurs in the directions of the $\Rn$ component.
In other words,
since $\bw = (\bx,\by) \in \Rn\times\Rs$, 
instead of requiring $|\bw| \gg 1$ and $|\bw| \cdot |D_\bw \vp| \geq L$,
we can equivalently require $|\bx| \gg1$ and $|\bx| \cdot |D_\bw \vp| \geq L$.

As counterpart to (MR)-regular values we explicitly introduce the 
next
\begin{definition}\label{def:ACV}
An \em asymptotic critical value $\bc$ of the mapping $\vp$ is simply \em a 
Malgrange-Rabier critical value of $\vp$, namely 
there exists a sequence $\bw_k := 
(\bx_k,\by_k)_k$ of points of $W$ such that: (i) $\bw_k \to \infty$, (ii) 
$\by_k \to \bc$ and
$$
|\bw_k| \cdot \nu(D_{\bw_k}\vp) \to 0.
$$
\noindent
It is customary to write $K_\infty(\vp)$ for the set of ACVs of the mapping 
$\vp$.
\end{definition}
\smallskip
\begin{definition}\label{def:GCV}
A \em generalized critical value of $\vp$ \em is any  value of the subset
$$
K(\vp) := K_0(\vp) \cup K_\infty (\vp) \, \subset \, \R^s.
$$
\end{definition}

\medskip
\noindent
{\bf Notation:} \em Let $\nu^\vp(\bw) := \nu(D_{\bw}\vp)$.\em

\medskip
We start with the following expected result:
\begin{lemma}\label{lem:Kf-def-closed}
The subset $K(\vp)$ is definable and closed.
\end{lemma}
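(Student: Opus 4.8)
The statement to prove is Lemma~\ref{lem:Kf-def-closed}: $K(\vp) = K_0(\vp)\cup K_\infty(\vp)$ is definable and closed. The plan is to treat definability and closedness of the two pieces separately, then take the union.

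\emph{Definability.} For $K_0(\vp)$, the critical set $\crit(\vp) = \{\bw\in W : D_\bw\vp \text{ not surjective}\}$ is definable, since $\bw\mapsto D_\bw\vp$ is a definable map into $\cL(d,s)$ (as $W$ is a $C^2$ definable manifold and $\vp$ is $C^2$ definable) and $\Sgm(d,s)$ is algebraic; hence $K_0(\vp)=\vp(\crit(\vp))$ is definable by the projection axiom. For $K_\infty(\vp)$, I would write it as the set of $\bc\in\Rs$ for which the condition in Definition~\ref{def:Malgrange-Rabier} \emph{fails}, i.e. for every $L>0$ there exist $\bw\in W$ with $|\vp(\bw)-\bc|<L$, $|\bw|>1/L$, and $|\bw|\cdot\nu^\vp(\bw)<L$. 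The function $\bw\mapsto\nu^\vp(\bw)=\nu(D_\bw\vp)$ is definable (the Rabier number $\nu(A)=\dist(A,\Sgm)$ by Proposition~\ref{prop:rabier-equiv} is a semi-algebraic function of $A$, composed with the definable $\bw\mapsto D_\bw\vp$), so the set $\{(\bc,L)\in\Rs\times\R_{>0} : \exists\,\bw\in W,\ |\vp(\bw)-\bc|<L,\ |\bw|\cdot\nu^\vp(\bw)<L,\ |\bw|>1/L\}$ is definable by projection; intersecting appropriately and projecting out $L$ (using that failure of (MR) means this holds for all small $L$, equivalently the infimum over $\bw$ of a definable quantity is $0$) exhibits $K_\infty(\vp)$ as definable. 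The cleanest route is to define, for each $\bc$, the definable function $\bc\mapsto \inf\{\,\max(|\bw|\cdot\nu^\vp(\bw),\,1/|\bw|,\,|\vp(\bw)-\bc|) : \bw\in W\,\}$-type expression and note $\bc\in K_\infty(\vp)$ iff the relevant infimum is $0$; definability of infima of definable families gives the claim.

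\emph{Closedness.} Since $K(\vp)\subset\Rs$ is definable, its closure is definable and differs from $K(\vp)$ on a definable set of lower dimension; but that is not quite enough, so I would argue closedness directly via curve selection. Suppose $\bc_j\in K(\vp)$ with $\bc_j\to\bc$; I want $\bc\in K(\vp)$. Using definability one reduces to a definable arc $\bc(t)\to\bc$ with $\bc(t)\in K(\vp)$ for $t$ near $\infty$ (or near $0$). For each $t$, either $\bc(t)\in K_0(\vp)$ — then there is $\bw(t)\in\crit(\vp)$ with $\vp(\bw(t))=\bc(t)$, chosen definably; if $\bw(t)$ stays bounded its limit lies in $\crit(\vp)$ (closed, as $W$ is closed and $D\vp$ continuous) and maps to $\bc$, so $\bc\in K_0(\vp)$; if $|\bw(t)|\to\infty$ then since $\nu^\vp(\bw(t))=0$ we get $|\bw(t)|\cdot\nu^\vp(\bw(t))=0\to 0$ with $\vp(\bw(t))\to\bc$, so $\bc\in K_\infty(\vp)$ — or $\bc(t)\in K_\infty(\vp)$, in which case a diagonal/definable-choice argument produces $\bw(t)\in W$ with $|\bw(t)|\to\infty$, $\vp(\bw(t))\to\bc$, and $|\bw(t)|\cdot\nu^\vp(\bw(t))\to 0$, giving $\bc\in K_\infty(\vp)$. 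In all cases $\bc\in K(\vp)$.

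\emph{Main obstacle.} The delicate point is the closedness of $K_\infty(\vp)$: one must extract, from a sequence (or arc) of asymptotic critical values $\bc_j$, a single sequence $\bw_k\to\infty$ witnessing that $\bc=\lim\bc_j$ is itself asymptotically critical, i.e. interchanging the ``limit in $\bc$'' with the ``limit at infinity in $\bw$''. The tool for this is definable choice together with the monotonicity/curve-selection lemmas recalled in Section~\ref{ref:o-min}: one builds a definable selection $t\mapsto\bw(t)$ with $\max\!\big(1/|\bw(t)|,\,|\bw(t)|\cdot\nu^\vp(\bw(t)),\,|\vp(\bw(t))-\bc(t)|\big)\to 0$, then uses that $\bc(t)\to\bc$ to conclude. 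I expect this diagonalization, and verifying that the relevant infimum-functions are genuinely definable (not merely that each slice is definable), to be where the real work lies; everything else is bookkeeping with the definability axioms and the identity $\nu(A)=\dist(A,\Sgm(d,s))$ from Proposition~\ref{prop:rabier-equiv}.
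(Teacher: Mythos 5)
Your proof is essentially correct, and the definability part (first-order formula / projection axiom / definable infima) is the same as the paper's, which it dispatches in a single sentence. For closedness, however, you take a genuinely different route from the paper, and your own assessment of where the difficulty lies is slightly off.

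The paper's closedness argument is a compactness argument: compactify $\Rn\times\Rs$ as $\bS^n\times\bS^s$ and $\Rgo$ as $[0,+\infty]$, form the compact definable set $G:=\clos\{(\bw,(1+|\bw|)\nu^\vp(\bw))\}\subset\bS^n\times\bS^s\times[0,+\infty]$, and observe that $K(\vp)$ is exactly the intersection with $\Rs$ of the projection to $\bS^s$ of the compact slice $G\cap(\bS^n\times\bS^s\times 0)$. A continuous image of a compact set is compact, hence closed in $\bS^s$, and its intersection with the open subset $\Rs$ is closed in $\Rs$. This handles $K_0$ and $K_\infty$ uniformly and avoids all case analysis.

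Your route via sequences/arcs is also valid, but you overstate the delicacy: the step you flag — ``interchanging the limit in $\bc$ with the limit at infinity in $\bw$'' — is an ordinary diagonalization and does not require definable choice or curve selection. Concretely, if $\bc_j\to\bc$ with each $\bc_j\in K_\infty(\vp)$, for each $j$ pick $\bw_j\in W$ with $|\bw_j|>j$, $|\vp(\bw_j)-\bc_j|<1/j$ and $|\bw_j|\nu^\vp(\bw_j)<1/j$; the sequence $(\bw_j)_j$ then witnesses $\bc\in K_\infty(\vp)$ directly. Similarly, your treatment of $\bc_j\in K_0(\vp)$ (bounded witnesses give $\bc\in K_0$ by closedness of $\crit(\vp)$ in $\Rn\times\Rs$; unbounded ones have $\nu^\vp\equiv 0$ so give $\bc\in K_\infty$) is correct. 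So the definable-choice and monotonicity machinery you invoke is unnecessary here — you can drop it entirely — though it is harmless. What the paper's compactification buys is precisely the elimination of this bookkeeping: the closure operation in the compact ambient space automatically absorbs both the ``escape to infinity in $\bw$'' and the ``critical-point'' cases in one stroke, and the identity $K(\vp)=\Rs\cap g(G\cap\bS^n\times\bS^s\times 0)$ then gives definability and closedness simultaneously.
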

\begin{proof}
Since the Malgrange-Rabier condition proceeds from a first order formula in 
the language of the structure $\cM$, the subset $K_\infty(\vp)$ is definable 
in the structure. 
Therefore the subset $K(\vp)$ is too. 
\\
Compactify $\R^q$ as $\bS^q$, and $\R_{\geq 0}$ as $[0,+\infty]$, by adding 
a single point at infinity. The compactification is semi-algebraic. Consider
the following compact definable subset
$$
G := \clos \{(\bw,(1+|\bw|)\cdot\nu^\vp(\bw)) \in W \times \R_{\geq 0} \} 
\subset \bS^n \times\bS^s \times [0,+\infty].
$$
Denoting by $g:G\to \bS^s$ the projection on $\bS^s =\Rs \sqcup\infty$, 
the following identity
$$
K(\vp) = \Rs \cap g \left(G\cap \bS^n\times \bS^s\times 0\right)
$$
concludes the proof.
\end{proof}
%
%

The next result tells us about the 
behaviour at infinity nearby a value $\bc \notin K(\vp)$ of some feature of 
the mapping $\vp$. 
\begin{lemma}\label{lem:rabier-spherical}
Let $\bc$ be a regular value of $\vp$ which is also (MR)-regular. 
Let $(\bw_k)_k$ be any sequence of $W$ such that (i) $\bw_k \to \infty$, 
(ii) $\by_k \to \bc$. The following limit holds true
\begin{equation}\label{eq:rabier-spheric}
\lim_{k \to +\infty} \, \dlt_{n+s} \left(\frac{\bw_k}{|\bw_k|} , 
T_{\bw_k} \vp\right) = 0.
\end{equation}
%
%
\end{lemma}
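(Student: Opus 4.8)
The plan is to argue by contradiction, extracting a definable curve along which the statement fails, and then to derive a quantitative incompatibility with the (MR)-condition at $\bc$. Suppose the limit \eqref{eq:rabier-spheric} does not hold. Then there is $\ve_0>0$ and a sequence $(\bw_k)_k$ with $\bw_k\to\infty$, $\by_k\to\bc$, and $\dlt_{n+s}(\bw_k/|\bw_k|, T_{\bw_k}\vp)\geq\ve_0$ for all $k$. By the curve selection lemma in the o-minimal structure (applied to the definable set of pairs $(\bw,t)$ with $\bw\in W$, $|\bw|=t$, $|\by-\bc|$ small, and $\dlt_{n+s}(\bw/|\bw|,T_\bw\vp)\geq\ve_0$, which accumulates at infinity), I would produce a definable curve $t\mapsto\gm(t)=(\bx(t),\by(t))\in W$, definable at $+\infty$, with $|\gm(t)|\to+\infty$, $\by(t)\to\bc$, and $\dlt_{n+s}(\gm(t)/|\gm(t)|,T_{\gm(t)}\vp)\geq\ve_0$ for $t$ large; after a definable reparametrization I may assume $|\gm(t)|=t$.

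The heart of the argument is to compare the rate of growth of $|\gm(t)|$ with the size of $\nu^\vp(\gm(t))$. Since $\gm$ lies in $W$ and is parametrized by arclength-in-radius with $|\gm(t)|=t$, Lemma \ref{lem:secant-tangent}(2) gives that $\gm'(t)$ is bounded and $\gm'(t)\to\bu_\gm:=\lim\gm(t)/|\gm(t)|$. Now $\gm'(t)\in T_{\gm(t)}W$, and I would decompose $\gm'(t)$ along $T_{\gm(t)}\vp=\ker D_{\gm(t)}\vp$ and its orthogonal complement $N_{\gm(t)}$ inside $T_{\gm(t)}W$. The component of $\gm'(t)$ normal to the fibre is $D_{\gm(t)}\vp$ applied to $\gm'(t)$ in a suitable sense, and its norm is controlled by $|(\vp\circ\gm)'(t)|=|\by'(t)|$; since $\by(t)\to\bc$, Lemma \ref{lem:derivative-infinity}(i) gives $t\cdot|\by'(t)|\to 0$, hence the normal component of $\gm'(t)$ tends to $0$. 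Therefore $\gm'(t)$, which has norm tending to $1$, becomes asymptotically tangent to the fibre $T_{\gm(t)}\vp$; but $\gm'(t)\to\bu_\gm=\lim\gm(t)/|\gm(t)|$, so $\dlt_{n+s}(\gm(t)/|\gm(t)|,T_{\gm(t)}\vp)\to 0$, contradicting $\geq\ve_0$.

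The step I expect to be the main obstacle is the quantitative bookkeeping in the middle paragraph: making precise that the fibre-normal component of $\gm'(t)$ is genuinely of size comparable to $|\by'(t)|$ (and not merely bounded by it up to a factor that could blow up), which requires using the (MR)-regularity at $\bc$ to keep $\nu^\vp(\gm(t))$ bounded below by a multiple of $1/t$. Concretely, write $\gm'(t)=\bv(t)+\bn(t)$ with $\bv(t)\in T_{\gm(t)}\vp$ and $\bn(t)\perp T_{\gm(t)}\vp$ inside $T_{\gm(t)}W$; then $D_{\gm(t)}\vp(\bn(t))=\by'(t)$, and by Proposition \ref{prop:rabier-equiv} and \eqref{eq:rabier-normal}, $|\bn(t)|\leq |\by'(t)|/\nu^\vp(\gm(t))$. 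Here (MR) gives $t\cdot\nu^\vp(\gm(t))\geq L>0$ for $t$ large, so $|\bn(t)|\leq t\,|\by'(t)|/L\to 0$ by Lemma \ref{lem:derivative-infinity}(i). Combined with $|\gm'(t)|\to 1$ this forces $|\bv(t)|\to 1$ and $\bv(t)-\gm'(t)\to\bbo$, whence $\dlt_{n+s}(\gm(t)/|\gm(t)|,T_{\gm(t)}\vp)=\dlt_{n+s}(\bu_\gm,T_{\gm(t)}\vp)\to 0$, the desired contradiction. The remaining care is purely to confirm that $\dlt_{n+s}$ of a unit vector to a subspace is small exactly when that vector is close to the subspace, and that replacing $\gm(t)/|\gm(t)|$ by its limit $\bu_\gm$ is harmless in the limit.
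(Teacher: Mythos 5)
Your proposal is correct and follows essentially the same route as the paper: reduce to a definable path at infinity (you do this via curve selection, the paper simply asserts the reduction), parametrize so $|\gm(t)|=t$, decompose $\gm'(t)$ into components tangent and normal to the fibre inside $T_{\gm(t)}W$, identify the normal component's image under $D\vp$ with $\by'(t)$, and combine Lemma~\ref{lem:derivative-infinity}(i) with the (MR) bound $t\,\nu^\vp\geq L$ to conclude $|\bn(t)|\to 0$. The bookkeeping you flag as the main obstacle is exactly the paper's chain of inequalities, so there is no gap.
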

\begin{proof}
By hypothesis, there exists a positive constant $L$
such that along any sequence $(\bw_k)_k$ as in the statement we
have
$$
|\bw_k| \cdot \nu^\vp(\bw_k) \, \geq \, L, \; {\rm for} \; k \gg 1.
$$

It is enough to work with paths definable at $+\infty$ instead of sequences.
Let
$$
t \mapsto \bw(t) = (\bx(t),\by(t))\in W
$$ 
be a germ of definable mapping at $+\infty$, such that as $t$ goes to $+\infty$ 
$$
\bw(t) := t \bu(t), \; {\rm with} \, \bu(t)\in \bS^{n+s-1}; \; 
\by(t) \, \to \, \bc.
$$ 
In particular 
$\bu(t) \to \bu\in\bS^{n-1}\times \bbo$.
Since $\vp$ is definable, the tangent-to-the-fibre mapping 
$$
\bw \, \mapsto \, T_\bw \vp 
\in \bG(\dim W-s,n+s)
$$
is definable. Thus the following  mapping 
is definable at $+\infty$
$$
t\mapsto T_{\bw(t)} \vp, \;\; {\rm and} \;\; T_{\bw(t)}\vp \to T
$$
as $t$ goes to $+\infty$. We want to show that 
\begin{center}
$
\lim_\infty \dlt_{n+s}(\bw',T_\bw \vp) = 0,
$
\end{center}
since Lemma \ref{lem:secant-tangent} gives that 
$\left|\bw' - \frac{\bw}{|\bw|}\right| \to 0$ at infinity. 
The mapping
$$
t \mapsto f(t) := \vp(\bw(t)) - \bc
$$
is definable at $+\infty$ and tends to $\bbo$ as $t$ goes to $+\infty$. 
Working with each component of $f$, Lemma \ref{lem:derivative-infinity} yields 
$$
t\cdot f'(t) \to \bbo.
$$
Let $N_\bw \vp \subset T_\bw W$ be the orthogonal complement of $T_\bw \vp$.
Let $\bw' = \bw_T' + \bw_N'$ be the decomposition of $\bw'$ in the orthogonal 
direct sum $T_{\bw(t)} \vp \oplus N_{\bw(t)} \vp = T_{\bw(t)} W$.
Of course the germ of each of the following mappings 
$$
t\mapsto T_{\bw(t)} W, \;\; t \mapsto N_{\bw(t)} \vp, \;\;
t \mapsto \bw_T',\,\; {\rm and} \;\; t\mapsto \bw_N'
$$ 
is definable at $+\infty$. By definition we have
$$
\dlt_{n+s}(\bw',T_\bw \vp) = |\bw_N'|.
$$
Combining the previous equality with the hypothesis, we find 
$$
|D_\bw \vp \cdot \bw'| =  |D_\bw \vp \cdot \bw_N'|\geq | \bw_N'|\cdot 
\nu^\vp(\bw(t)),
$$
from which we get
$$
|\bw_N'(t)| \leq \frac{t\cdot |f'(t)|}{L} \to 0.
$$
\end{proof}
We introduce the 
following:
\begin{definition}\label{def:increase-fast}
A germ of a continuous, positive function $\mu : (\Rgo,+\infty) \to \Rgo$
is \em increasing fast  at $+\infty$ \em if
$$
\lim_{t\to+\infty} t^{-1}\cdot\mu(t) = +\infty.
$$
For a function germ $\mu$ increasing fast at $+\infty$, we define
\begin{center}
$
K_\infty^\mu(\vp) = \{\bc\in\Rs \, : \, \exists (\bw_k)_k\in W \; {\rm with} 
\; \bw_k \to +\infty, \, \by_k \to \bc \; {\rm such \; that} \; 
\mu(|\bw_k|)\cdot\nu^\vp(\bw_k) \to 0\}.
$
\end{center}
\end{definition}

\smallskip
In Definition \ref{def:increase-fast} we can, equivalently,
require that $\mu(|\bx_k|) \cdot \nu^\vp(\bw_k) \, 
\to \, \infty$. 
When the function germ $\mu$ is definable at $+\infty$, the subset 
$K_\infty^\mu(\vp)$ is definable, contained in $K_\infty(\vp)$.
As in \cite{dAc,KOS}, we have
\begin{proposition}\label{prop:K-inf-p}
There exists a function germ at infinity $\mu:(\Rgo,+\infty) \to \Rgo$ 
increasing fast and definable at $+\infty$ such that
$$
K_\infty(\vp) = K_\infty^\mu(\vp).
$$
\end{proposition}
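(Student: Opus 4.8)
The plan is to produce the function germ $\mu$ by a diagonal-type argument, exploiting the o-minimality of the structure to uniformly control the rate at which $\nu^\vp$ can decay along divergent sequences whose $\vp$-image stays bounded. The key observation is that $K_\infty(\vp) = \bigcup_{\mu} K_\infty^\mu(\vp)$, where the union is over all fast-increasing definable germs (this inclusion is immediate since for $\mu(t) = 1 + t$ one recovers the definition of $K_\infty(\vp)$, and each $K_\infty^\mu(\vp) \subset K_\infty(\vp)$), so the real content is to find a \emph{single} $\mu$ for which equality already holds. I expect this is where the genuine work lies.

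First I would set up the right definable object. Consider, as in the proof of Lemma \ref{lem:Kf-def-closed}, a compactification and the definable ``profile'' of $\nu^\vp$ near infinity. Concretely, for $\bc \in \Rs$ and $R > 0$ large, define
$$
h(\bc, R) := \inf\{\nu^\vp(\bw) : \bw = (\bx,\by) \in W,\ |\bw| = R,\ |\by - \bc| \leq 1/R\}.
$$
This $h$ is definable (it is built from a first-order formula) and for fixed $\bc$ the germ $R \mapsto h(\bc,R)$ is monotonic near $+\infty$. The point $\bc$ lies in $K_\infty(\vp)$ precisely when $\liminf_{R\to\infty} R\cdot h(\bc,R) = 0$. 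Now I would like to extract from the family $(h(\bc,\cdot))_\bc$ a uniform lower bound: by the o-minimal monotonicity and cell-decomposition machinery (and using that $K_\infty(\vp)$ is definable, Lemma \ref{lem:Kf-def-closed}), one partitions $\Rs$ into finitely many cells on each of which the asymptotic behaviour of $h(\bc,R)$ is controlled by a single definable function of $R$; taking the worst such function (its minimum over the finitely many cells, appropriately), one obtains a definable germ $\eta : (\Rgo,+\infty) \to \Rgo$ with $\eta(R) \to 0$ such that for every $\bc \notin K_\infty(\vp)$ one has $R\cdot h(\bc,R) \geq$ (something bounded below) while for $\bc \in K_\infty(\vp)$ the decay is no faster than prescribed by $\eta$. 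The standard trick (cf. \cite{KOS}, \cite{dAc}) is then to set
$$
\mu(t) := \frac{1}{\sqrt{\eta(t)}} \quad \text{or more carefully} \quad \mu(t) := t \cdot \omega(t)
$$
for a suitable definable $\omega(t) \to +\infty$ chosen slowly enough that $\mu(|\bw_k|)\cdot\nu^\vp(\bw_k) \to 0$ still forces $\bc \in K_\infty(\vp)$, yet $\mu(t)/t \to +\infty$ so $\mu$ is fast-increasing. The containment $K_\infty^\mu(\vp) \subset K_\infty(\vp)$ is automatic; the reverse $K_\infty(\vp) \subset K_\infty^\mu(\vp)$ is precisely what the uniform choice of $\omega$ guarantees, via Lemma \ref{lem:derivative-infinity} applied to the definable germ $R \mapsto h(\bc,R)$ to convert ``$R \cdot h(\bc,R) \to 0$ along a subsequence'' into ``$\mu(R)\cdot h(\bc,R) \to 0$ along a subsequence'' for the enhanced $\mu$.

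**The main obstacle** will be making the uniformity over $\bc$ precise: a priori the rate $\eta$ witnessing membership in $K_\infty(\vp)$ could depend on $\bc$, and one needs the finiteness coming from o-minimal cell decomposition (definable choice / definable trivialization of the family $R \mapsto h(\bc,R)$ over a stratification of $\Rs$) to pin down a $\bc$-independent $\eta$, hence a $\bc$-independent $\mu$. Once the uniform $\eta$ is in hand, upgrading to a fast-increasing $\mu$ is routine: it suffices to interpose any definable $\omega$ with $1 \ll \omega(t) \ll 1/\big(t\,\eta^{-1}\text{-type bound}\big)$, which exists by o-minimality since between any two definable germs tending to $+\infty$ and to a slower rate there is room. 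I would present the argument exactly along the lines of \cite[proof of the analogous statement]{KOS} and \cite{dAc}, indicating that only cosmetic changes are needed to pass from mappings $\Rn \to \Rs$ to the present family setting, since $\nu^\vp$ and the relevant sublevel sets are manifestly definable and the compactness/definability inputs (used already in Lemma \ref{lem:Kf-def-closed}) are identical.
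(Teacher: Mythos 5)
Your approach is genuinely different from the paper's, but the specific implementation has a gap that would need to be repaired.

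The paper does not attempt to parametrize the decay rate by $\bc$ at all. It works at the level of the graph of the definable map $G(\bw)=(\vp(\bw),|\bw|\cdot\nu^\vp(\bw))$: compactifying, it sets $A := \infty\times\clos(K_\infty(\vp))\times 0$ and invokes the definable Wing Lemma to produce a single closed definable ``wing'' $B$ inside the graph whose closure meets $\Gamma\setminus\mathrm{graph}(G)$ exactly along $A$. Because $B$ accumulates only on $A$ (where the third coordinate vanishes), the definable function $b(r) := r\cdot\max\{\nu^\vp(\bw): \bw\in\pi_W(B),\ |\bw|=r\}$ tends to $0$, and $\mu(r) := r/\sqrt{b(r)}$ is then fast-increasing; moreover every $\bc\in K_\infty(\vp)$ is witnessed by a curve \emph{inside} the wing, along which $\mu(|\bw|)\nu^\vp(\bw)\leq\sqrt{b(|\bw|)}\to 0$. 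The Wing Lemma thus delivers in one stroke both the uniform rate and the witness paths.

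The gap in your version: your profile function
$h(\bc,R) := \inf\{\nu^\vp(\bw): |\bw|=R,\ |\by-\bc|\leq 1/R\}$
hard-codes the rate $|\by-\bc|\leq 1/R$, and the claimed equivalence ``$\bc\in K_\infty(\vp)$ iff $\liminf_R R\cdot h(\bc,R)=0$'' is only one-directional. If a witness curve $\bw(t)=(\bx(t),\by(t))$ for $\bc\in K_\infty(\vp)$ has $|\by(t)-\bc|\sim t^{-\alpha}$ with $\alpha<1$, then $\bw(t)$ is eventually \emph{outside} the set over which your infimum is taken, so $h(\bc,|\bw(t)|)$ need not be controlled by $\nu^\vp(\bw(t))$, and $\liminf R\cdot h(\bc,R)$ can stay bounded away from $0$ even though $\bc\in K_\infty(\vp)$. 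Only the converse direction (``$\liminf R\cdot h(\bc,R)=0$ implies $\bc\in K_\infty(\vp)$'') is automatic. Consequently the set you characterize via $h$ is in general a proper subset of $K_\infty(\vp)$, and any $\mu$ you extract from it will only see that smaller set. To repair this along your line you would have to replace $1/R$ by a $\bc$-dependent definable rate $\ve_\bc(R)$ and then make the uniformization over a cell decomposition of $\Rs$ precise (definable Hardt-type trivialization of the family $R\mapsto h(\bc,R)$ over each cell); you correctly flag this as the main obstacle, but that step is the entire content of the proposition, so it cannot be left at the level of ``the standard trick.'' The Wing Lemma is precisely the tool that bypasses both difficulties at once, which is why the paper (following \cite{dAc,KOS}) reaches for it.

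Finally, a minor point: your candidate $\mu(t)=1/\sqrt{\eta(t)}$ need not be fast-increasing in the required sense ($\mu(t)/t\to\infty$) unless $\eta(t)\ll t^{-2}$; the alternative form $\mu(t)=t\,\omega(t)$ with $\omega\to+\infty$ avoids this, but then you must check that $\omega$ can be chosen so that the reverse inclusion $K_\infty(\vp)\subset K_\infty^\mu(\vp)$ still holds, which again comes back to the uniformity you have not established.
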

\begin{proof}
The principle of the proof is the same as that of \cite[Lemma 3.3]{dAc} or 
\cite[Lemma 3.1]{KOS}. Using the inverse of the stereographic projection, any 
Euclidean space $\R^q$ compactifies smoothly and semi-algebraically as 
$\bS^q~=~\R^q \sqcup \infty$.
Let $\clos(W)$ be the closure of $W$ in $\bS^{n+s}$.
We consider the following definable mapping
$$
G: \, W \; \to \Rs\times\R, \; \bw \, \mapsto \, (\vp(\bw),|\bw|
\cdot\nu^\vp(\bw)).
$$ 
Let $Y:=\clos(W) \times \bS^s \times \bS^1$. Define
$$
\Gamma := \clos({\rm graph}(G)) \subset Y
\; {\rm and} \;
A := \infty\times \clos(K_\infty(\vp)) \times 0 \subset Y.
$$
Observe that $A$ is closed. The definable Wing Lemma (\cite{Whi}, 
\cite[Lemma 2.7]{Loi1}) states that there exists a 
closed definable subset $B$ of $\Gm$ such that 
$$
B\cap A = \emptyset, \;\; {\rm and} \;\; \clos(B)\cap \Gm\setminus {\rm
graph}(G) = A.
$$
The point $\infty\times \bu \times 0$ belongs to $A$ if there exists a sequence 
$(\bw_k)_k \in \pi_W(B)$ such that $\bw_k \to \infty$, $\by_k \to \bc$ and 
$|\bw|\cdot\nu^\vp(\bw) \to 0$, where $\pi_W :\Gamma \mapsto \clos(W)$ is the 
projection onto the first factor. Consider the following 
definable function
$$
 b(r) := r\cdot\max\{\nu^\vp(\bw)\; {\rm for} \; \bw\in\pi_W(B) \;
{\rm and} \; |\bw| = r\}.
$$
We check as in \cite[Lemme 3.3]{dAc} that the function defined as 
$$
r \mapsto \mu(r) := \frac{r}{\sqrt{b(r)}}
$$
satisfies the announced statement.
\end{proof}
To conclude this section, we would like to add a few words about the special 
case of graphs.

\smallskip
Let $X$ be a closed connected definable $C^2$-sub-manifold of $\R^n$. 
Let $F:X \to\Rs$ be a $C^2$ and definable mapping. 
We suppose that $\dim X \geq s$.
The corresponding Rabier number function to consider is
$$
\bx \mapsto \nu^F(\bx) := \nu(D_\bx F).
$$
We recall the following complementary definitions: 

\em A  value $\bc$ is \em a Malgrange-Rabier regular value of the mapping $F$ 
\em 
if there exists a positive constant $L$
such that for any sequence $(\bx_k)_k$ of $X$ such that
(i) $\bx_k\to+\infty$, (ii) $F(\bx_k)\to \bc$ , we have
$$
|\bx_k|\cdot \nu^F(\bx_k) \geq L.
$$

A value $\bc\in\Rs$ is \em an ACV of the mapping $F$ \em if there exists a 
sequence $(\bx_k)_k$ of $X$ such that
(i) $\bx_k\to+\infty$, (ii) $F(\bx_k)\to \bc$ and (iii)
$|\bx_k|\cdot \nu^F(\bx_k) \to 0$.  \em

\smallskip
Let $W$ be the graph of $F$, and let $\bw =(\bx,F(\bx))$ be a point of $W$.
Since $T_\bw W$ is just the graph of the linear mapping $D_\bx F:T_\bx X 
\to\Rs$, 
Equation \eqref{eq:nu-projection} of Lemma \ref{lem:nu-projection}
guarantees that the value $\bc\in\Rs$ lies in $K_\infty(F)$ if and 
only if it lies in $K_\infty(\vp)$.

\smallskip
Any statement and notion for $\vp$ presented in this section admits a version 
for the mapping $F$. The modifications to be done are clear, and we can check 
that each demonstration for a statement about $F$ adapts easily from the 
corresponding statement for $\vp$, following the same steps.
%
%
%
%
%
%
%
%
%
%
%
%
%
%
%
%
%
%
%
%
%
%
%
%
%
%
%
%
%
%
%
%
%
%
%
%
%
%
%
\section{Malgrange-Rabier condition and
geometry at infinity}\label{section:MR-treg}
We are going to combine here the local point of view at infinity of 
Section \ref{section:t-reg} and the affine one of Section \ref{section:rabier}.

\medskip
The blowing-up at infinity mapping 
$\beta:\bM \to\Rn\setminus\bbo$, although being a diffeomorphism is by 
no means an isometry. Nevertheless spheres $r\times \bS^{n-1}$ are mapped 
onto spheres $\bS_{\frac{1}{r}}^{n-1}$, angles of pairs of vectors tangent to 
sphere at a same point are preserved and orthogonality to the spheres as 
well, as we can check below:
Let $\bm= (\br,\bu)$ be a point of $\bM$. We find 
$$
D_\bm \beta|_{T_\bu \bS^{n-1}} = \br^{-1} {\rm Id_{T_\bu \bS^{n-1}}}, 
\; \mbox{ and } \;
D_\bm \beta \cdot \dd_\br = - r^{-2} \bu = -\br^{-1} \beta(\bm) .
$$

Working with $Z$ nearby $\br_Z = 0$ is working in a neighbourhood at infinity
of $W$. We find the following:
\begin{lemma}\label{lem:rabier-surjective}
If $\vp$ is (MR)-regular at the regular value
$\bc$, then $W$ is $\bt$-regular at $\bc$.
\end{lemma}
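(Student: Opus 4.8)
The plan is to argue by contradiction, using the curve-selection/definability machinery already set up, and to translate the affine (MR)-estimate through the blowing-up $\beta_s$ into a lower bound for the Rabier number of the projection onto $\Rs$ of a limiting tangent plane, which by Lemma \ref{lem:reg-inf-rabier} is exactly $\bt$-regularity. So suppose $W$ is \emph{not} $\bt$-regular at $\bc$. By Lemma \ref{lem:reg-inf-rabier} there is a point $\bz_0 = (0,\bu_0,\bc) \in Z^\infty \cap \bM^\infty\times\bc$ and a plane $T \in \cT_{\br_Z,\bz_0}^\infty$ with $\nu_s(\pi_s(\bz_0)|_T) = 0$, i.e.\ the projection $L : T \to T_\bc\Rs$ is not surjective. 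Since $\cT_{\br_Z}$ is closed and definable and $Z^\infty\cap\bM^\infty\times\bc$ is definable and compact, the curve selection lemma (definable version) produces a germ of a definable curve $t \mapsto \bz(t) = (\br(t),\bu(t),\by(t)) \in Z\setminus\crit(\br_Z)$ with $\bz(t)\to\bz_0$ as $t\to 0^+$, $\br(t)\to 0^+$, $\by(t)\to\bc$, and $T_{\bz(t)}\br_Z \to T$ in the Grassmann bundle.

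Next I would push this curve forward to $W$ by $\beta_s$: set $\bw(t) = \beta_s(\bz(t)) = (\bx(t),\by(t))$ with $|\bx(t)| = 1/\br(t) \to +\infty$, so $\bw(t) \to \infty$ and $\vp(\bw(t)) = \by(t) \to \bc$; thus the (MR)-hypothesis applies along this path and gives $|\bx(t)|\cdot\nu^\vp(\bw(t)) \geq L > 0$ for $t$ small. The key computation is to relate $T_{\bw(t)}\vp$ to $T_{\bz(t)}\br_Z$ under $D\beta_s$. The point is that $\beta_s$ carries the level $\{\br_Z = \br(t)\}$, which near $\bz(t)$ is the sphere-fibre piece, to (a piece of) the affine sphere $\bS_{1/\br(t)}^{n-1}\times\Rs$ intersected with $W$, while the level $\vp^{-1}(\by(t))$ is $W_{\by(t)}\times\by(t)$; the genuinely relevant object is the tangent space to the level of $\vp$, and I would show that the failure of $L$ to be surjective forces, via the explicit description of $D_\bm\beta$ recorded just before the lemma (it scales the $\bS^{n-1}$-directions by $\br^{-1}$ and sends $\dd_\br$ to $-\br^{-1}\beta(\bm)$), a corresponding degeneracy of the projection $T_{\bw(t)}\vp \to \Rs$ of the right order. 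Concretely, one estimates $\nu^\vp(\bw(t))$ from above by testing against a covector that is asymptotically annihilated by $L$, and the $\br^{-1} = |\bx(t)|$ scaling factor coming from $D\beta_s$ on the spherical directions is exactly what converts ``$\nu_s(L)=0$'' into ``$|\bx(t)|\cdot\nu^\vp(\bw(t)) \to 0$'', contradicting the (MR)-bound $\geq L$.

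The main obstacle is precisely this last transfer: controlling $\nu^\vp(\bw(t))$ through the non-isometric diffeomorphism $\beta_s$. One has to be careful that $T_{\bw(t)}\vp$ need not be contained in the tangent to the sphere $\bS_{1/\br(t)}^{n-1}\times\Rs$, so the decomposition of $T_{\bz(t)}W$ into the radial direction $\dd_\br$ and the spherical-plus-parameter directions must be used to split a unit covector on $T_{\bw(t)}\vp$ into a part seeing the $\br$-scaling ($\sim\br$) and a part seeing the $\br^{-1}$-scaling, and then check which part dominates $\nu^\vp$. Tracking these two scales and showing the estimate closes — i.e.\ that the limiting covector realizing $\nu_s(L)=0$ on $T$ pulls back (after the appropriate rescaling by $|\bx(t)|$) to a covector witnessing $|\bx(t)|\nu^\vp(\bw(t))\to 0$ — is the technical heart of the argument; everything else (definability of the relevant maps, compactness of $Z^\infty\cap\bM^\infty\times\bc$, curve selection, and the reduction to definable paths exactly as in the proof of Lemma \ref{lem:rabier-spherical}) is routine.
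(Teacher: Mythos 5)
Your plan is the contrapositive of the paper's direct argument. The paper assumes (MR), reduces to a definable path at infinity exactly as you do, fixes $\xi_s\in\bS^{s-1}$, uses Proposition~\ref{prop:rabier-equiv} to produce $\chi(t)\in T_{\bw(t)}W$ with $|\chi(t)|\le 1$ and $D_{\bw(t)}\vp\cdot\chi(t)=\nu^\vp(\bw(t))\,\xi_s$, then pushes $\chi(t)/\nu^\vp(\bw(t))$ through $D\beta_s^{-1}$: the $t^{-1}$-scaling on the spherical directions together with $t\cdot\nu^\vp(\bw(t))\ge L$ bounds the spherical part, the $t^{-2}$-scaling kills the radial part, and the limit is a vector $(0,\xi_\bS,\xi_s)$, which shows the limiting tangent plane surjects onto $\Rs$. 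You instead start from a limiting $T\in\cT_{\br_Z,\bz_0}^\infty$ with degenerate projection (via Lemma~\ref{lem:reg-inf-rabier} and curve selection) and aim to transport a witnessing covector back to $W$ to contradict (MR). The scaling computation through $D\beta_s$ that you identify as the ``technical heart'' is the same one the paper performs, just run in the opposite direction, so this is a legitimate mirrored route.

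However, there is a genuine gap you have not flagged. The plane $T$ you obtain is a limit of the \emph{codimension-one} subspaces $T_{\bz_k}\br_Z = T_{\bz_k}Z\cap\ker(D\br)\subset T_{\bz_k}Z$, whereas $\nu^\vp(\bw_k)=\inf_{|\lambda|=1}\bigl|\,\text{orthogonal projection of }(\bbo,\lambda)\text{ onto }T_{\bw_k}W\,\bigr|$ is controlled by the \emph{full} tangent space $T_{\bw_k}W$, equivalently the full $T_{\bz_k}Z$ after rescaling. Even if $T$ misses some $\lambda\in\bS^{s-1}$, there remains one extra direction in $T_{\bz_k}Z$ transverse to $\ker(D\br)$, and a priori its $\Rs$-component could keep $|\bx_k|\,\nu^\vp(\bw_k)$ bounded below. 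Closing this requires invoking the (MR) hypothesis a second time (it is assumed throughout your contradiction argument) to show that along the definable path the transverse ``radial'' direction of $T_{\bw(t)}W$ has asymptotically vanishing $\Rs$-component, i.e.\ essentially the estimate behind Lemma~\ref{lem:rabier-spherical}. This is not ``routine'' as the last sentence of your plan suggests; it is part of the heart. Note that the paper's direct version faces the twin of this issue in its final assertion that the constructed limit $(0,\xi_\bS,\xi_s)\in \lim T_{\bz(t)}Z$ actually lies in $R=\lim T_{\bz(t)}\br_Z$: that too tacitly uses that $R=T\cap\ker(D\br)$ in the limit, which again rests on (MR) making $\dd_\br$ asymptotically tangent to $Z$. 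So your proposal is viable, but you should make this step explicit rather than deferring it.
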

\begin{proof}
It is sufficient to show the result along a path at infinity.
Let $t\mapsto \bw(t) \in W$ be the germ of a mapping definable at $+\infty$ 
such that:
$\bz(t) := \beta_s^{-1}(\bw(t)) \to \bz_\infty = (0,\bu,\bc) \in 
\bM_s^\infty$, for $\bc$ a (MR)-regular value of $\vp$, and
$$
T(t) := T_{\bz(t)} Z \to T, \;\; {\rm and}  \;\;
R(t) := T_{\bz(t)} \br_Z \to R.
$$
Without loss of generality we further assume that $\bw(t) = t\bu + o(t)$, 
and set
$$
\bu(t) = \frac{\bx(t)}{|\bx(t)|} = \frac{\bw(t)}{|\bw(t)|} + o(1).
$$
We recall that 
$$
R(t) \subset T(t) \cap 0 \times T_{\bu(t)}\bS^{n-1}
\times \Rs.
$$
Let $D(t) := D_{\bw(t)} \vp$.
Let $\nu(t) = \nu^\vp(\bw(t))$. By hypothesis we have
$$
t\cdot\nu(t)\geq L, \;  \mbox{for some constant}\; L >0.
$$
Let us still denote by $D(t)$ the extension of $D(t)$ to $\Rn\times\Rs$ as 
the null mapping over $(T_{\bw(t)}W)^\perp$. 
In other words, 
we have
$$
t \gg 1 \Longrightarrow \frac{1}{\nu(t)}D(t) \cdot \bB^{n+s}(\bbo,1) 
\supset \bB^s(\bbo,1).
$$
Let $\xi_s$ be any vector of $\bS^{s-1}$. There exists a germ of mapping 
definable at infinity  
$$
t\,\mapsto \, \chi(t) \, \in \, \bB^{n+s}(\bbo,1)
$$ 
tangent to $X$ along $\bw$, such that for $t$ large enough
$$
\xi_s = \frac{1}{\nu(t)} D(t) \cdot \chi(t) \in \bS^{s-1}.
$$
In the orthogonal direct sum
$$
T_{\bw(t)} (\Rn\times\Rs) = \R\bu(t)\oplus T_{\bu(t)} \bS^{n-1}\oplus\Rs,
$$
we decompose $\chi(t)$ as the definable orthogonal sum
$$
\chi(t) = \psi(t)\bu(t) + \chi_\bS(t) + \nu(t)\xi_s.
$$
We find 
$$
\xi(t) :=D_{\bw(t)} \beta_s^{-1} \cdot \frac{1}{\nu(t)}\chi(t) = 
\left( \frac{-\psi(t)}{t^2\nu(t)},\frac{1}{t\nu(t)}\chi_\bS (t),\xi_s\right).
$$
Since $|\chi (t)| \leq 1$ and $t\cdot\nu(t) \geq L$ once $t$ is large 
enough, we deduce there exists $\xi_\bS \in T_\bu\bS^{n-1}$ such that
$$
\xi(t) \to (0,\xi_\bS,\xi_s) \in \R\times T_\bu \bS^{n-1} \times \Rs,
$$
proving the announced result. Indeed the vector $(0,\xi_\bS,\xi_s)$ lies
necessarily in $R$, since $R$ is a subspace of $T\cap 0\times 
T_\bu\bS^{n-1}\times\Rs$.
\end{proof}
The next lemma is a converse of Lemma \ref{lem:rabier-surjective}.
\begin{lemma}\label{prop:t-implies-RM}
If  $W$ is $\bt$-regular at the regular value $\bc$ of $\vp$, then $\vp$ is 
(MR)-regular at $\bc$.
\end{lemma}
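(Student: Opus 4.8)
The plan is to prove the contrapositive in the style of Rabier's estimates: assuming $\vp$ is \emph{not} (MR)-regular at the regular value $\bc$, produce a sequence witnessing the failure of $\bt$-regularity of $W$ at $\bc$. So suppose there is a sequence $(\bw_k)_k$ in $W$ with $\bw_k\to\infty$, $\by_k\to\bc$ and $|\bw_k|\cdot\nu^\vp(\bw_k)\to 0$. By the remark following Definition~\ref{def:Malgrange-Rabier}, we may equivalently arrange $|\bx_k|\to\infty$ and $|\bx_k|\cdot\nu^\vp(\bw_k)\to 0$. Passing to a definable path (curve selection at infinity, using o-minimality exactly as in Lemma~\ref{lem:rabier-spherical}), it suffices to work with a germ $t\mapsto\bw(t)=(\bx(t),\by(t))\in W$ definable at $+\infty$, with $\bw(t)=t\bu(t)$, $\bu(t)\to\bu\in\bS^{n-1}\times\bbo$, $\by(t)\to\bc$, and $t\cdot\nu^\vp(\bw(t))\to 0$; by definability we may also assume the limits $T(t):=T_{\bz(t)}Z\to T$ and $R(t):=T_{\bz(t)}\br_Z\to R$ exist, where $\bz(t)=\beta_s^{-1}(\bw(t))\to\bz_\infty=(0,\bu,\bc)\in\bM_s^\infty$, and $R\in\cT^\infty_{\br_Z,\bz_\infty}$.

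The core of the argument is to show that the projection $R\to\Rs$ is \emph{not} surjective, i.e. there is a unit vector $\xi_s\in\bS^{s-1}$ missing from the image. The natural candidate for $\xi_s$ is (a limit of) a unit vector $\vp_k\in\bS^{s-1}$ almost realizing the infimum in $\nu(D_{\bw_k}\vp)=\inf_{|\vp|=1}|D_{\bw_k}\vp^*(\vp)|$, so that $|D_{\bw(t)}\vp^*(\xi_s(t))|\le 2\nu^\vp(\bw(t))$ say. Passing to a further definable sub-path, $\xi_s(t)\to\xi_s\in\bS^{s-1}$. Now take any vector $\eta\in R$; since $R\subset T\cap(0\times T_\bu\bS^{n-1}\times\Rs)$, write $\eta=(0,\eta_\bS,\eta_s)$ and I must show $\eta_s\ne\xi_s$ is impossible unless $\eta_s$ is orthogonal to $\xi_s$ — more precisely that $\la\eta_s,\xi_s\ra=0$ for every such $\eta$, hence $\xi_s\perp$ (projection of $R$). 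To do this, lift $\eta$ to vectors $v(t)\in R(t)\subset T(t)\cap(0\times T_{\bu(t)}\bS^{n-1}\times\Rs)$, $v(t)\to\eta$, all definable at $+\infty$. Apply $D_{\bz(t)}\beta_s$ to $v(t)$: using the computation $D_\bm\beta|_{T_\bu\bS^{n-1}}=\br^{-1}\mathrm{Id}$ and $D_\bm\beta\cdot\dd_\br=-\br^{-1}\beta(\bm)$ from Section~\ref{section:MR-treg}, the image $D_{\bz(t)}\beta_s\cdot v(t)$ has $\Rs$-component exactly $v_s(t)\to\eta_s$ and $\Rn$-component of size $O(1/t)=O(|\bx(t)|^{-1})\cdot\|(\text{bounded})\|$ going to $\bbo$; moreover this image lies in $T_{\bw(t)}\vp$ because $R(t)$ is tangent to the level of $\br_Z$ and $\beta_s$ sends levels of $\br_Z$ into levels of $\vp$... actually the cleaner route: $R(t)\subset T(t)=T_{\bz(t)}Z$, so $D\beta_s\cdot v(t)\in T_{\bw(t)}W$; then pair with $\xi_s$ via $D_{\bw(t)}\vp$: $\la D_{\bw(t)}\vp\cdot(D\beta_s\cdot v(t)),\xi_s\ra=\la D\beta_s\cdot v(t),D_{\bw(t)}\vp^*(\xi_s(t))\ra+o(1)$, whose absolute value is $\le \|D\beta_s\cdot v(t)\|\cdot 2\nu^\vp(\bw(t))$. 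Since the $\Rn$-part of $D\beta_s\cdot v(t)$ is $O(t^{-1})$ and its $\Rs$-part is bounded, while $t\cdot\nu^\vp(\bw(t))\to 0$, the right side $\to 0$; but the left side converges to $\la \eta_s,\xi_s\ra$ because the $\Rs$-component of $D_{\bw(t)}\vp\cdot(D\beta_s\cdot v(t))$ is $v_s(t)$ up to the vanishing $\Rn$-contribution through $D_{\bw(t)}\vp$ whose operator norm is bounded (by definability / Lemma~\ref{lem:secant-tangent}). Hence $\la\eta_s,\xi_s\ra=0$ for all $\eta\in R$, so the projection $R\to\Rs$ misses $\xi_s$, contradicting $\bt$-regularity at $\bc$.

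Therefore $\vp$ must be (MR)-regular at $\bc$, which proves the lemma. The main obstacle I anticipate is the bookkeeping in the last chain of estimates: one must be careful that (a) $D_{\bw(t)}\vp$ has uniformly bounded operator norm along the path — this follows from definability of $\bw\mapsto D_\bw\vp$ and the fact that it converges (using Lemma~\ref{lem:secant-tangent} style control, or simply that a definable bounded-below-at-infinity situation forces $\|D\vp\|$ to stay bounded along the specific path), (b) replacing the exact Rabier-minimizing covector by a definable germ $\xi_s(t)$ still keeps $|D_{\bw(t)}\vp^*(\xi_s(t))|\le C\,\nu^\vp(\bw(t))$ — this just needs a definable selection of a near-minimizer, available by o-minimal choice/curve selection, and (c) checking that $D\beta_s\cdot v(t)$ genuinely lies in $T_{\bw(t)}W$ and that its decomposition into $\Rn\oplus\Rs$ parts behaves as claimed, which is the metric-distortion computation of $\beta_s$ recorded at the start of Section~\ref{section:MR-treg}. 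None of these is deep, but the orthogonality argument replacing the surjectivity argument of Lemma~\ref{lem:rabier-surjective} has to be set up symmetrically and carefully. Everything else is routine o-minimal curve selection and the linear algebra of the Rabier number from Section~\ref{section:linear-algebra}.
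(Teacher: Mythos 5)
Your contrapositive strategy mirrors the paper's direct argument: the paper quantifies $\bt$-regularity as $\nu_s(t)\geq\nu_\bc>0$ via Lemma~\ref{lem:reg-inf-rabier}, lifts any unit $\xi\in\Rs$ to a definable $\xi(t)\in T_{\bz(t)}\br_Z$ of bounded norm, pushes forward by $D\beta_s$ to get $\chi(t)\in T_{\bw(t)}W$ with $D_{\bw(t)}\vp\cdot\chi(t)=\xi$ and $|\chi(t)|=O(t)$, and reads off $\nu^\vp(\bw(t))\geq \nu_\bc/(2|\bw(t)|)$; you run the same blow-up computation from the negation of (MR). So the route is essentially the paper's proof read in the other direction, and the idea is sound --- but your estimates contain two slips that need repair. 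Since $D_\bm\beta|_{T_\bu\bS^{n-1}}=\br^{-1}\,\mathrm{Id}$ and $\br(t)=1/t$, the $\Rn$-component of $D_{\bz(t)}\beta_s\cdot v(t)$ is $t\cdot v_\bS(t)=O(t)$, not $O(1/t)$ as you assert: the spherical blow-up at infinity \emph{expands} tangent-to-sphere directions by a factor $|\bx(t)|$, it does not shrink them. As a consequence $\|D\beta_s\cdot v(t)\|=O(t)$ grows, and the correction hidden in your ``$+\,o(1)$'', namely $\la D\beta_s\cdot v(t),\,D_{\bw(t)}\vp^*(\xi_s-\xi_s(t))\ra$, is only bounded by $O(t)\cdot|\xi_s-\xi_s(t)|$, which need not tend to zero. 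Both slips disappear if you never switch covectors: pair with $\xi_s(t)$ throughout. Then, because $D_{\bw(t)}\vp$ is the restriction to $T_{\bw(t)}W$ of the plain projection onto $\Rs$, the left-hand side is literally $\la v_s(t),\xi_s(t)\ra\to\la\eta_s,\xi_s\ra$ --- there is no spurious $\Rn$-contribution to worry about --- while the right-hand side is bounded by $\|D\beta_s\cdot v(t)\|\cdot 2\nu^\vp(\bw(t))=O\bigl(t\,\nu^\vp(\bw(t))\bigr)\to 0$, and here the hypothesis $t\cdot\nu^\vp(\bw(t))\to 0$ is used precisely to beat the $O(t)$ growth (had the component really been $O(1/t)$, the weaker statement $\nu^\vp\to 0$ would have sufficed, which cannot be right). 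With that repair your orthogonality $\la\eta_s,\xi_s\ra=0$ for all $\eta\in R$, hence the failure of surjectivity of $R\to\Rs$ and of $\bt$-regularity at $\bc$, follows as intended.
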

\begin{proof}
Again, it is enough to check that the property holds true along a 
definable path at infinity.

Let $t\mapsto \bw(t) : = (\bx(t),\by(t))$ be a germ of a mapping definable 
at $+\infty$, and let $\bz = \beta_s^{-1}\circ\bw$. Let us consider the following 
limits as $t\to+\infty$
$$
\bz(t) \to \bz_\infty := (0,\bu,\bc) \in \bMbar_s^\infty, 
\;\; {\rm and} \;\; 
R(t) := T_{\bz(t)} \br_Z \to R,
$$
where $Z:=\beta_s^{-1}(W)$.
We can assume that $\bz(t) = (t^{-1},\bu(t),\by(t))$ with $\by(t)) 
\to \bc \notin K(\vp)$.
Let 
$$
\pi_s (t) := \pi_s(\bz(t)) \;\; {\rm and} \;\;
\nu_s(t) := \nu_s\left(\pi_s (t)|_{T_{\bz(t)}\br_Z}\right).
$$
The hypothesis implies that 
$$
\lim_{t\to +\infty}\nu_s(t) = \nu \geq \nu_\bc.
$$
Let $\xi$ be a unit vector of $\Rs$. Let $\xi(t)$ be a definable
path with values in $T\br_Z$, along the path $t \mapsto \bz(t)$ such that
for $t$ large enough we have 
$$
\pi_s(t)\cdot\xi(t) = \xi.
$$
Therefore we can write the following orthogonal decomposition
$$
\xi(t) = r(t)\dd_\br\oplus\zt(t)\oplus\xi\in  T_{\bz(t)}\bM_s = \R \oplus 
T_{\bu(t)} \bS^{n-1} \oplus \Rs,
$$
so that the hypotheses yield
$$
r(t) = 0, \;\; {\rm and} \;\; 
|\xi(t)| \leq \frac{1}{\nu_s(t)} \leq \frac{2}{\nu}.
$$
Let $t \mapsto \chi(t)$ be the definable path at infinity
$$
\chi(t) := D_{\bz(t)}\beta_s(t)\cdot\xi(t) \in T_{\bw(t)}W. 
$$
Decomposing $\chi(t)$ in the orthogonal direct sum $\R\bu(t)\oplus 
T_{\bu(t)}\bS^{n-1} \oplus \Rs$, we get
$$
\chi(t) = 0\oplus\bv(t)\oplus\xi.
$$
In particular we obviously find 
$$
D_{\bw(t)}\vp \cdot \chi(t) = \xi.
$$
By the choice of parametrization of $\bz$ we have $|\bx(t)| = t$. 
Since 
$$
\xi(t) = 0\oplus\frac{1}{t}\bv(t)\oplus\xi,
$$
we observe furthermore that
$$
\frac{2}{\nu}\cdot t \; \geq \; t\cdot |\xi(t)| \; \geq \; |\chi(t)|.
$$
Since $|\bw(t)|\geq t$, the previous inequalities imply the following inclusion
$$
\bB_1^s \subset D(t) \cdot \bB_{\frac{2}{\nu}|\bw(t)|}^{n+s},
$$
where $D(t)\in \cL(n+s,s)$ is the linear extension of $D_{\bw(t)} \vp$ to 
$\Rn\times\Rs$, as the null mapping over the normal space $N_{\bw(t)}W$
of $T_{\bw(t)}W$ in $\Rn\times\Rs$. In other 
words we have proved that
$$
\nu^\vp(\bw(t)) = \nu(D(t)) 
\geq \frac{\nu_\bc}{2} \cdot \frac{1}{|\bw(t)|},
$$
which ends the proof.
\end{proof}
Combining Lemma \ref{lem:rabier-surjective}, 
Lemma \ref{prop:t-implies-RM} we have proved in our most general context the
following
\begin{proposition}\label{prop:T-equiv-MR}
A regular value $\bc$ of $\vp$ is (MR)-regular if and only if
the sub-manifold $W$ is $\bt$-regular at the value $\bc$.
\end{proposition}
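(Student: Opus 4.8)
The statement to prove is Proposition~\ref{prop:T-equiv-MR}: a regular value $\bc$ of $\vp$ is (MR)-regular iff $W$ is $\bt$-regular at $\bc$. The proof plan is essentially a two-line assembly: the forward implication is exactly Lemma~\ref{lem:rabier-surjective}, and the reverse implication is exactly Lemma~\ref{prop:t-implies-RM}. So there is no genuine obstacle remaining — all the work has been front-loaded into those two lemmas, each of which reduces the claim to a definable path at infinity and then performs the explicit change-of-coordinates computation through $\beta_s$ (respectively $\beta_s^{-1}$), tracking the Rabier number under the blow-up.

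\begin{proof}
The direct implication is the content of Lemma \ref{lem:rabier-surjective}: if $\vp$ is (MR)-regular at the regular value $\bc$, then along any germ of a definable path $t \mapsto \bw(t)$ in $W$ whose lift $\bz(t) = \beta_s^{-1}(\bw(t))$ tends to a point $(0,\bu,\bc)$ of $Z^\infty \cap \bMbar_s^\infty$, the orthogonal decomposition in $\R\bu(t) \oplus T_{\bu(t)}\bS^{n-1} \oplus \Rs$ of a preimage under $\tfrac{1}{\nu(t)}D_{\bw(t)}\vp$ of an arbitrary unit vector $\xi_s \in \bS^{s-1}$, transported through $D_{\bw(t)}\beta_s^{-1}$, converges to a vector $(0,\xi_\bS,\xi_s)$ lying in the limit $R$ of the relative tangent spaces $T_{\bz(t)}\br_Z$; hence the projection $R \to \Rs$ is onto, which is $\bt$-regularity at $\bc$ by Definition \ref{def:regular-infinity}.

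Conversely, the reverse implication is Lemma \ref{prop:t-implies-RM}: assuming $W$ is $\bt$-regular at the regular value $\bc$, one works again along a definable path $t\mapsto \bw(t)=(\bx(t),\by(t))$ at infinity, normalized so that $|\bx(t)| = t$, with $\bz(t) = \beta_s^{-1}(\bw(t)) \to (0,\bu,\bc)$. Lemma \ref{lem:reg-inf-rabier} turns $\bt$-regularity into the quantitative bound $\nu_s(\pi_s(t)|_{T_{\bz(t)}\br_Z}) \geq \nu_\bc > 0$; pulling back a solution $\xi(t)$ of $\pi_s(t)\cdot\xi(t)=\xi$ (for $\xi\in\bS^{s-1}$) through $D_{\bz(t)}\beta_s$ produces $\chi(t) \in T_{\bw(t)}W$ with $D_{\bw(t)}\vp\cdot\chi(t)=\xi$ and $|\chi(t)| \leq \tfrac{2}{\nu_\bc}\,t \leq \tfrac{2}{\nu_\bc}|\bw(t)|$, whence $\bB_1^s \subset D_{\bw(t)}\vp \cdot \bB_{\frac{2}{\nu_\bc}|\bw(t)|}^{n+s}$ and therefore $\nu^\vp(\bw(t)) \geq \tfrac{\nu_\bc}{2}\cdot\tfrac{1}{|\bw(t)|}$, i.e.\ (MR)-regularity at $\bc$. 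Combining the two lemmas gives the equivalence.
\end{proof}

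\begin{remark}
The only subtlety worth flagging is that both implications are established pathwise rather than sequentially; this is harmless because the relevant sets ($K_\infty(\vp)$ on one side, and $Z^\infty \cap \bM^\infty\times\bc$ together with the fibres of $\cT_{\br_Z}$ on the other) are definable, so the curve selection lemma lets one replace an arbitrary violating sequence by a definable arc, and the Rabier number, the secant and tangent direction mappings, and the projection of Grassmannian planes onto $\Rs$ are all definable and continuous, by Lemma \ref{lem:secant-tangent} and Lemma \ref{lem:reg-inf-rabier}.
\end{remark}
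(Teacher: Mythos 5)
Your proof is correct and takes exactly the same approach as the paper: Proposition~\ref{prop:T-equiv-MR} is stated in the paper immediately after Lemma~\ref{lem:rabier-surjective} (forward implication) and Lemma~\ref{prop:t-implies-RM} (converse), with the one-line justification that combining the two lemmas yields the equivalence. Your summaries of what each lemma accomplishes, and your remark on replacing sequences by definable arcs via curve selection, are both accurate restatements of what those two lemmas already handle.
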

In other words, we have rigorously showed, in this most general context, 
that (MR)-regularity and (the current avatar of) $\bt$-regularity are equivalent (see 
\cite{Tib99,DRT,DuGr1} for special cases).
\begin{remark}\label{rmk:d=s} When $d=s$, the condition of Malgrange-Rabier 
regularity at $\bc$ is equivalent to the local properness of the mapping $\vp$
at the value $\bc$, which means that $\vp$ is proper over some neighbourhood of 
$\bc$. Indeed the ACVs coincide with the non-properness values of the mapping  
$\vp$ (see \cite[pages 86 to 88]{KOS} and adapt almost readily their 
arguments to our context). In particular Lemma \ref{lem:rabier-surjective} 
and Lemma \ref{prop:t-implies-RM} are trivial.
\end{remark}
Using Proposition \ref{prop:K-inf-p} we are now in position to show the
following Morse-Sard  type result, whose proof is different from 
those given in \cite{JeKu,KOS} and gives a more precise insight of the 
geometric phenomenon at hand than that of \cite{DRT}.
\begin{theorem}\label{thm:morse-sard}
The subset $K_\infty (\vp)$ of $\R^s$ (is definable) and has positive 
codimension.
\end{theorem}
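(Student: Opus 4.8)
The plan is to reduce the statement to a dimension count in the blown-up space $\bMbar_s$, using the characterization of $K_\infty(\vp)$ via a fast-increasing definable function $\mu$ provided by Proposition \ref{prop:K-inf-p}, together with the compactness of the links at infinity. First I would note that $K_0(\vp)$, the ordinary critical values, has positive codimension by the definable Morse–Sard theorem, so the whole difficulty is $K_\infty(\vp)$. Fix $\mu$ from Proposition \ref{prop:K-inf-p}, so that $K_\infty(\vp) = K_\infty^\mu(\vp)$; equivalently $\bc \in K_\infty(\vp)$ iff there is a definable arc $t\mapsto \bw(t) = (\bx(t),\by(t))$ in $W$ with $|\bx(t)|\to\infty$, $\by(t)\to\bc$, and $\mu(|\bx(t)|)\cdot\nu^\vp(\bw(t))\to 0$.

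Next I would translate this into the blow-up $Z = \beta_s^{-1}(W) \subset \bM_s$ and pass to the boundary $Z^\infty \subset \bM_s^\infty = \bM^\infty\times\Rs$. The idea is that a value $\bc \in K_\infty(\vp)$ forces the existence of a point $\bz_\infty = (0,\bu,\bc) \in Z^\infty$ at which the relative tangent data degenerates, in the sense that the projection $\pi_s(\bz_\infty)$ restricted to some limiting $T \in \cT_{\br_Z,\bz_\infty}^\infty$ fails to be surjective onto $\Rs$ — this is exactly the failure of $\bt$-regularity, and it is equivalent to the failure of (MR) by Proposition \ref{prop:T-equiv-MR}. So $K_\infty(\vp) \subseteq \pi_{\Rs}\big(\Lambda\big)$, where $\Lambda \subset \cT_{\br_Z}^\infty$ is the definable closed subset of pairs $(\bz,T)$ with $\bz = (0,\bu,\by) \in Z^\infty$ and $\pi_s(\bz)|_T$ non-surjective, and $\pi_{\Rs}$ reads off the $\by$-coordinate. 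The key point is that $\Lambda$ has codimension at least one over $\Rs$: over a generic $\by$, $\bt$-regularity holds (this is where the fast-increasing $\mu$ and the compactness of $Z^\infty \cap \bM^\infty\times\by$ enter — for generic $\by$ the definable, continuous function $\bz\mapsto \nu_s(\pi_s(\bz)|_{T_\bz\br_Z})$ is bounded below on the compact fibre), so the fibre of $\Lambda$ over generic $\by$ is empty. A definable set that projects onto $\Rs$ with generically empty fibres has image of positive codimension, by the definable fibre-dimension theorem (the image is definable, and if it had full dimension $s$ it would contain an open set, contradicting generic emptiness of the fibre).

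To make the generic-emptiness claim precise I would argue by contradiction using the curve-selection lemma: if $K_\infty(\vp)$ had dimension $s$, it would contain a nonempty open subset $U$; then over $U$ one could build, by definable choice, a family of degenerate arcs, and extract a definable arc $\by(\tau) \in U$ together with arcs $\bw(\tau,t)$ witnessing non-(MR); a standard wing-lemma / definable-selection manipulation (as already used in the proof of Proposition \ref{prop:K-inf-p}) then produces a single contradiction with the fact that (MR) is an open-type condition on a compact boundary fibre. Alternatively, and more cleanly, I would invoke that $K_\infty^\mu(\vp)$ is the image under the $\Rs$-projection of the definable set $\{(\bz_\infty, T) : \pi_s(\bz_\infty)|_T \text{ not onto}\}$ inside the Grassmann bundle over $Z^\infty$, whose dimension is at most $\dim Z^\infty \le (\dim W - 1) - 1 = \dim W - 2$ in its fibre directions plus at most $s-1$ — so a direct dimension bookkeeping in $\bMbar_s$ gives $\dim K_\infty(\vp) \le s-1$.

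The main obstacle I anticipate is making rigorous the claim that $\bt$-regularity (hence (MR)) holds over a generic parameter — i.e. that the "bad locus" $\Lambda$ genuinely has positive fibre-codimension and does not spread out over an open set of parameters. The clean way is to use that the non-surjectivity locus of $\pi_s(\bz)|_T$ is a proper definable subvariety of each Grassmann fibre (a determinantal condition), intersect it with the limiting relative conormal $\cT_{\br_Z}^\infty$, and then count dimensions carefully, keeping track of the extra minus-one coming from passing to the boundary divisor $\br_Z = 0$; this minus-one is precisely what yields \emph{positive} rather than merely \emph{$\le s$} codimension. The rest is routine o-minimal bookkeeping together with the already-established equivalences of Proposition \ref{prop:K-inf-p} and Proposition \ref{prop:T-equiv-MR}.
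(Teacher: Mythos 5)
Your high-level plan is sound and parallels the paper in its first steps: invoke Proposition \ref{prop:K-inf-p}, pass to the blow-up $Z=\beta_s^{-1}(W)$, and reformulate failure of (MR) as failure of $\bt$-regularity via Proposition \ref{prop:T-equiv-MR}, so that $K_\infty(\vp)$ is seen as the $\Rs$-projection of a ``bad locus'' on the boundary at infinity. The difficulty is in the last step, and there your argument has a genuine gap.

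Your first route is circular. You claim ``over a generic $\by$, $\bt$-regularity holds \ldots so the fibre of $\Lambda$ over generic $\by$ is empty,'' and then deduce that the image has positive codimension. But ``$\bt$-regularity holds for generic $\by$'' is precisely the content of the theorem (by Proposition \ref{prop:T-equiv-MR}); you cannot use it as an input. Your second route, a raw dimension count on the boundary, also does not close: you write $\dim Z^\infty \le \dim W - 2$, but the correct bound is only $\dim Z^\infty \le \dim W - 1$, and since $\dim W$ can be much larger than $s$, knowing only $\dim Z^\infty$ tells you nothing about $\dim \tau_Z(Z^\infty)$. The determinantal-locus heuristic has the same flaw: the non-surjectivity locus is codimension one in each Grassmann fibre, but a priori the whole limiting set $\cT_{\br_Z}^\infty$ could sit inside it, and then the dimension count gives no gain at all. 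What one actually needs is a bound on the \emph{rank} of $\tau_Z$ along $Z^\infty$, not a bound on $\dim Z^\infty$.

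The missing ingredient is Thom $(\arel)$-stratification. The paper first replaces $Z$ by $Z_1 := \beta_s^{-1}(W_1)$ where $W_1 := \{\bw\in W : \mu(|\bw|)\cdot\nu^\vp(\bw)\le 1\}$, so that by construction the limits $T$ of relative tangent spaces $T_{\bz_k}\br_Z$ along sequences in $Z_1$ never project onto $\Rs$. It then stratifies the pair $(Z_1, Z_1^\infty)$ to be $(\arel)$-regular with respect to $\br_1 = \br_Z|_{\clos(Z_1)}$ (Lo\"{\i}'s theorem). The $(\arel)$-condition forces $T_{\bz_\infty}S^\infty \subset T$ for every stratum $S^\infty \subset Z_1^\infty$ and every such limit $T$; since $T$ does not project onto $\Rs$, neither does $T_{\bz_\infty}S^\infty$, hence ${\rm rank}\, D_{\bz_\infty}(\tau_Z|_{S^\infty}) \le s-1$, and therefore $\dim\tau_Z(Z_1^\infty)\le s-1$. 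This transfer of the non-surjectivity from limiting relative tangent spaces to the intrinsic tangent space of the stratum is precisely what your ``routine bookkeeping'' is silently assuming, and it is exactly the step that requires Thom regularity. Without it, you cannot conclude.
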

\begin{proof}
By Proposition \ref{prop:K-inf-p} we know that $K_\infty (\vp) = 
K_\infty^\mu(\vp)$, for some fast decreasing function $\mu$ definable
at $+\infty$.
The following subset
$$
W_1 := \{\bw\,\in\,W\,:\, \mu(|\bw|)\cdot \nu^\vp(\bw) \leq 1\}
$$
 is a closed and definable subset of $\Rn\times\Rs$.
Let
$$
Z_1 := \beta_s^{-1}(W_1), \;\; {\rm and} \;\;
Z_1^\infty := \clos(Z_1) \cap \bM_s^\infty.
$$
By Lemma \ref{lem:rabier-surjective} and Lemma \ref{prop:t-implies-RM}, 
any point $\bz_\infty =(0,\bu,\bc)$ of $Z_1^\infty$ is a limit of a 
sequence $(\bz_k)_k$ of $Z_1$ such that $T_{\bz_k}\br_Z \to R\subset
T_{\bz_\infty} \Mbar_s$ which does not projects surjectively onto 
$\Rs$.
Let $\tau_Z$ be the projection of $\clos(Z)$ onto $\Rs$.
Thus 
$$
\tau_Z(Z_1^\infty) \, = \, K_\infty (\vp).
$$
Let $\br_1$ be the restriction of $\br_Z$ to $\clos(Z_1)$. It is a 
continuous definable mapping. The pair $(Z_1,Z_1^\infty)$ of germs at 
$\bM_s^\infty$ can be definably stratified to be $(\ba_{rel})$-regular
w.r.t. the function $\br_1$ \cite{Loi3}.
Let $\bz_\infty$ be a point of $S^\infty$, a strata contained in $Z_1^\infty$. 
Therefore $T_{\bz_\infty} S^\infty$ is contained in the limit 
$T$ at $\bz_\infty$ of the relative tangent spaces $T_{\bz_k}\br_Z = 
T_{\bz_k}\br_1$, given any sequence $(\bz_k)_k$ of $Z_1$ converging to 
$\bz_\infty$. By Proposition \ref{prop:T-equiv-MR} and the definition of $Z_1$,
the limit $T$ cannot project surjectively onto $\Rs$. Therefore we deduce 
$$
{\rm rank} \, D_{z_\infty}\left(\tau_Z|_{S^\infty}\right) \leq s-1,
$$
and thus we find
$$
\dim \tau_Z(Z_1^\infty) \leq s-1.
$$
\end{proof}
%
%

%
%
%
%
%
%
%
%
%
%
%
%
%
%
%
%
%
%
%
%
%
%
%
%
%
%
%
%
%
%
%
%
%
%
%
%
\section{Bifurcation values and triviality results}\label{section:bif}

We recall here in this most general setting the notions and 
results about equi-singularity nearby a (MR)-regular value $\bc$ in 
the context of the previous sections.

\bigskip
Let $X$ be a connected, definable $C^a$ sub-manifold of $\Rn$ which is 
also a
closed subset of $\Rn$, where $a\in \N_{\geq 2} \cup \infty$. 
Let $F:X \to \R^s$ be a $C^k$ definable mapping where
$2\leq k \leq a$. In this definition $C^k$ is understood as the maximal 
possible regularity of $F$ on $X$. In order to have an interesting statement
we assume that $\dim X \geq s$.
\begin{definition}\label{def:typical}
A value $\bc$ of $\Rs$ is  \em typical for the mapping $F$ \em if
there exists a neighbourhood $\cV$ of $\bc$ in $\Rs$ such that
the restriction
$$
F|_\cU: \cU:=F^{-1}(\cV) \to \cV
$$
is a trivial $C^{k-1}$-fibre bundle over $\cV$ with model fibre $F^{-1}(\bc)$ 
and  $F|_\cU$ is the projection mapping onto the base.
\end{definition}

\bigskip
We can now see the relation of (MR)-regularity condition previously 
introduced with equi-singularity in the following expected
\begin{theorem}\label{thm:trivialisation}
If a regular value $\bc$ of $\Rs$ is (MR)-regular for the mapping $F$, then 
$\bc$ is a 
typical value for $F$.   
\end{theorem}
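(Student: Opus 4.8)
The plan is to follow Rabier's integration-of-vector-fields scheme \cite{Rab}: produce, for each coordinate direction on $\Rs$, a lift of the corresponding constant vector field to a $C^{k-1}$ vector field on $\cU:=F^{-1}(\cV)$ (for a small cube $\cV$ around $\bc$) whose flow is complete on the natural time interval, then compose the flows to build a $C^{k-1}$ trivialization. The single analytic fact that makes this work is a uniform lower bound for the Rabier number near $\bc$ of the form $\nu^F(\bx)\geq L'(1+|\bx|)^{-1}$, which is exactly what the $(MR)$-condition delivers once combined with the regularity of the value $\bc$.

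\textbf{Step 1: the uniform Rabier bound.} Suppose no such $L',\delta>0$ exist. Then there is a sequence $(\bx_k)_k$ in $X$ with $F(\bx_k)\to\bc$ and $(1+|\bx_k|)\cdot\nu^F(\bx_k)\to 0$. If $(\bx_k)_k$ is bounded, a subsequence converges to some $\bx_\infty\in X$ ($X$ being closed in $\Rn$); continuity of $\nu^F$ gives $\nu^F(\bx_\infty)=0$ while $F(\bx_\infty)=\bc$, contradicting that $\bc$ is a regular value (recall $\crit(F)=\{\nu^F=0\}$ is closed). If $(\bx_k)_k$ is unbounded, a subsequence has $|\bx_k|\to\infty$, whence $|\bx_k|\cdot\nu^F(\bx_k)\to 0$, contradicting $(MR)$ at $\bc$. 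So there are $\delta,L'>0$ with $\nu^F(\bx)\geq L'(1+|\bx|)^{-1}$ on $F^{-1}(\bB^s(\bc,\delta))$. Fix an open cube $\cV:=\prod_{j=1}^s\,]c_j-\eta,c_j+\eta[\,\subset\bB^s(\bc,\delta)$ and set $\cU:=F^{-1}(\cV)$; since $\nu^F>0$ on $\cU$, every $D_\bx F:T_\bx X\to\Rs$ is surjective and $F^{-1}(\bc)$ is a $C^k$ submanifold.

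\textbf{Step 2: the lifting vector fields.} For $j=1,\dots,s$ let $v_j(\bx)\in T_\bx X$ be the unique vector orthogonal to $\ker D_\bx F$ with $D_\bx F\cdot v_j(\bx)=\be_j$; this is $(D_\bx F|_{(\ker D_\bx F)^\perp})^{-1}(\be_j)$, hence $C^{k-1}$ in $\bx$, and by Proposition \ref{prop:rabier-equiv} one has $|v_j(\bx)|\leq 1/\nu^F(\bx)\leq (1+|\bx|)/L'$. Moreover $v_j$ is $F$-related to the constant field $\partial_{y_j}$ on $\cV$, so along any integral curve $\gamma$ of $v_j$ one has $F(\gamma(t))=F(\gamma(0))+t\be_j$. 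The linear growth bound and Gronwall's inequality forbid finite-time escape to infinity; since $X$ is closed in $\Rn$ and $v_j$ is tangent to $X$, the flow $\Phi^j_t$ of $v_j$ is therefore defined for all $(\bx,t)$ with $F(\bx)+t\be_j\in\cV$.

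\textbf{Step 3: assembling the trivialization, and the main obstacle.} Define $h:\cV\times F^{-1}(\bc)\to\cU$ by
$$h(\by,\bx_0):=\Phi^s_{y_s-c_s}\circ\Phi^{s-1}_{y_{s-1}-c_{s-1}}\circ\cdots\circ\Phi^1_{y_1-c_1}(\bx_0).$$
Since after the $j$-th flow the $F$-image has its first $j$ coordinates equal to $y_1,\dots,y_j$ and the remaining ones still $c_{j+1},\dots,c_s$, and $\cV$ is a cube, each flow above is defined; thus $F\circ h=\mathrm{pr}_\cV$, and $h$ is $C^{k-1}$ because flows of $C^{k-1}$ vector fields depend $C^{k-1}$-ly on time and initial condition. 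Its inverse, $\bx\mapsto\bigl(F(\bx),\,\Phi^1_{c_1-y_1}\circ\cdots\circ\Phi^s_{c_s-y_s}(\bx)\bigr)$ with $\by=F(\bx)$, is defined and $C^{k-1}$ by the same cube bookkeeping, and lands in $F^{-1}(\bc)$. Hence $F|_\cU$ is a trivial $C^{k-1}$ fibre bundle over $\cV$ with model fibre $F^{-1}(\bc)$ and $F|_\cU$ the projection, i.e.\ $\bc$ is typical. I expect the genuinely delicate point to be the completeness of the flows in Step 2: it is precisely here that one must use the Rabier number rather than $|D_\bx F|$, since only the estimate $\nu^F(\bx)\geq L'(1+|\bx|)^{-1}$ yields the linear growth of $v_j$ needed to rule out blow-up; the smoothness of $v_j$ and $h$ and the coordinate bookkeeping are routine.
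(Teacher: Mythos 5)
Your argument is correct, but it follows precisely the route the paper's authors explicitly contrast themselves with: they note that their proof ``does not get into the messy task of composing $s$ flows (which is rarely explicitly done once $s\geq 2$).'' Rather than producing $s$ separate lifts $v_j$ and chaining $s$ flows across a cube, the paper considers a single \emph{parametric} vector field $\xi(\bx,\by):=\bV(\bx)\cdot(\by-\bc)$ on $\bF\times\bB$, where $\bV(\bx)=A(\bx)^t(A(\bx)A(\bx)^t)^{-1}$ is exactly the pseudoinverse you use (so $\bV(\bx)\cdot\be_j = v_j(\bx)$). Its flow $\Psi(t,\bx,\by)$ satisfies $F(\Psi(t,\bx,\by))=F(\bx)+t(\by-\bc)$; the very same Gr\"onwall estimate you invoke shows completeness on $[-1,1]$, and the time-$1$ map $(\bx,\by)\mapsto\Psi(1,\bx,\by)$ is directly the $C^{k-1}$ trivialization $F^{-1}(\bc)\times\bB\to\bF$. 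What this buys is that no cube is needed (any closed ball works), the $\by$-dependence is $C^{k-1}$ automatically from smooth dependence of ODE solutions on parameters, and invertibility is just $\Psi(-1,\cdot,\by)$; the corresponding points that you label ``routine bookkeeping'' --- that the $s$-fold composition is defined in the correct order, jointly $C^{k-1}$ in $(\by,\bx_0)$, and inverted by reversing the order --- are exactly what the parametric formulation makes immediate. Your Step 1 (promoting $(MR)$ plus regularity of $\bc$ to a uniform bound $\nu^F(\bx)\geq L'(1+|\bx|)^{-1}$ on a saturated neighbourhood) and the operator-norm estimate $|v_j(\bx)|\leq 1/\nu^F(\bx)$ via Proposition~\ref{prop:rabier-equiv} coincide with the paper's Estimate~\eqref{eq:MR-global}, so the analytic core is the same; only the packaging of the $s$ directions differs.
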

There are many occurrences of this result over the last forty years, 
mostly for functions though \cite{HaLe,ST,Par,Rab,Tib99,TLLZa,KOS,Jel1,Jel2,
dAc,JeKu,DRT,DuGr1} (this list is far from being exhaustive). 
The proof we present here is explicit and does not get into the messy task 
of composing $s$ flows (which is rarely explicitly done once $s\geq 2$). We 
follow the simple and beautiful idea of Rabier \cite{Rab} (see 
also Jelonek's variation in \cite{Jel1,Jel2} using Gaffney's characterization
of (MR)-regularity \cite{Gaf}).
\begin{proof}
Assume that $\bc$ is a (MR)-regular value of $F$. 
There exists a positive number $\ve$ such that the closed ball
$\bB := \bB^p(\bc,\ve)$ does not intersect with $K(F)$.
Let $\bF$ be the closed definable $C^k$ sub-manifold with boundary 
$$
\bF := F^{-1}(\bB) \subset X.
$$
Observe that a sequence $(\bx_k)_k$ of $\bF$ escaping $\bF$ as $k$ goes to 
infinity either goes to the boundary $\dd \bF$ or goes to infinity (see
Condition (4.1) in  \cite[Lemma 4.1]{Rab}).
Up to a translation we can assume that the origin of $\Rn$ does 
not lie in $\bF$. 
By definition of Malgrange-Rabier regularity, we can assume that 
there exists a positive constant $M$ such that the following estimates holds
true in $\bF$:
\begin{equation}\label{eq:MR-global}
|\bx| \cdot \nu(D_\bx F) \geq M.
\end{equation}
For $\bx\in \bF$, let $A(\bx) := D_\bx F$. We consider the following 
linear mapping
$$
\bV(\bx) := A(\bx)^t \cdot (A(\bx)\cdot A(\bx)^t)^{-1}.
$$
Following Rabier's terminology, $\bV(\bx)$ is a right inverse of $A(\bx)$:
$$
A(\bx)\cdot\bV(\bx) = {\rm Id}_{T_{F(\bx)}}\Rs.
$$
The mapping  $\bx \mapsto \bV(\bx)$ is a $C^{k-1}$ definable section of 
the vector bundle $F^*(T\Rs)|_\bF$ and takes values in 
$NF|_\bF \subset T\Rn|_\bF$, the normal bundle of $F$ over $\bF$.
We follow now Rabier's proof \cite[Section 4]{Rab}. Consider
the following differential equation with parameters:
\begin{equation}\label{eq:ODE-rabier}
\left\{
\begin{array}{ccl}
\dot{\bx} & = & \bV(\bx)\cdot(\by-\bc) \\
\bx(0) & \in & \bF
\end{array}
\right.
\end{equation}
where $\by \in \bB$. 
The vector field with parameters
$$
\xi(\bx,\by) := \bV(\bx)\cdot(\by-\bc)
$$
is definable and $C^{k-1}$.
If $\Psi$ denotes the flow of ODE \eqref{eq:ODE-rabier}, observe that
$$
F(\Psi(t,\bx,\by)) = F(\bx) + t(\by-\bc).
$$
Let $\bz = (\bx,\by)$ be in $\bF\times\bB$ and  let $I_\bz$
be the domain over which the solution 
$$
t \mapsto  \vp (t;\bz)
$$
of the ODE \eqref{eq:ODE-rabier} exists and where $\vp(0;\bz) = \bx$.
Let
$$
\ell(t;\bz) := \left|\int_0^t |\bV(\vp(\tau;\bz))|\rd\tau\right|,
$$
be the length of the trajectory $t \mapsto \vp(t;\bz)$ between the time $0$ 
and $t$. Using Estimate \eqref{eq:MR-global} combined with a Gr\"onwall 
argument (see \cite{dAGr1}), we find that
$$
\ell(t;\bz) \leq \frac{1}{M}\cdot|\by-\bc|\cdot|\bx|\cdot e^\frac{t}{M}.
$$ 
From here we conclude as in the proof of \cite[Lemma 4.2]{Rab}, that for
each $\bx\in F^{-1}(\bc)$ the interval $I_\bz$ contains $[-1,1]$.
The trivialization is achieved as the $C^{k-1}$ diffeomorphism 
\begin{equation}\label{eq:MR-flow}
F^{-1}(\bc)\times \bB \to \bF, \;\;  
(\bx,\by) \mapsto \Psi(1,\bx,\by).
\end{equation}
\end{proof}
Let $\cV$ be any connected component of $\Rs\setminus K(F)$. Let 
$\cU_1,\ldots,\cU_\aph$ be the connected components of $F^{-1}(\cV)$ and 
$F^a$ be the restriction of $F$ to $\cU_a$. Theorem \ref{thm:trivialisation} 
implies the following:
\begin{corollary}\label{cor:trivialisation}.   
For each $a=1,\ldots,\aph$, the mapping $F^a: \cU_a \mapsto \cV$ induces a 
locally trivial $C^{k-1}$ fibre bundle structure over $\cV$, with connected 
model fibre $F^{-1}(\bc_a) \cap \cU_a$ for some $\bc_a \in \cV$.
\end{corollary}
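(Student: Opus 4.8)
The plan is to deduce the statement from Theorem~\ref{thm:trivialisation} together with the fact that a definable set has finitely many connected components, each definable. First I would observe that $\cV\cap K(F)=\emptyset$ forces every $\by\in\cV$ to be a regular value of $F$ (it lies outside $K_0(F)$) and an (MR)-regular value (it lies outside $K_\infty(F)$, the set of non-(MR)-regular values). Hence Theorem~\ref{thm:trivialisation} applies at each $\by\in\cV$: there are a connected open neighbourhood $\cV_\by\subset\cV$ and a $C^{k-1}$ diffeomorphism $\Theta_\by\colon F^{-1}(\by)\times\cV_\by\to F^{-1}(\cV_\by)$ commuting with the projections onto $\cV_\by$. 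Running over $\by\in\cV$, these charts exhibit $F$ restricted to $F^{-1}(\cV)$ as a locally trivial $C^{k-1}$ fibre bundle onto $\cV$; in particular this restriction is an open mapping.

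Next I would pass to a connected component. Since $F^{-1}(\cV)$ is definable, $\aph$ is finite and each $\cU_a$ is definable and both open and closed in $F^{-1}(\cV)$. In a chart $\cV_\by$ for which $F^{-1}(\by)\neq\emptyset$, the set $\Theta_\by^{-1}\big(\cU_a\cap F^{-1}(\cV_\by)\big)$ is open and closed in the product $F^{-1}(\by)\times\cV_\by$, hence, $\cV_\by$ being connected, is of the form $S_{a,\by}\times\cV_\by$ with $S_{a,\by}:=F^{-1}(\by)\cap\cU_a$ open and closed in $F^{-1}(\by)$. This shows at once that $F^a$ is locally $C^{k-1}$-trivial over its image, with fibre $F^{-1}(\by)\cap\cU_a$, and that $F^a(\cU_a)$ is open in $\cV$. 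It is also closed in $\cV$: given $\by_m\in F^a(\cU_a)$ with $\by_m\to\by\in\cV$, choose the chart $\cV_\by$; for $m$ large, $\by_m\in\cV_\by$ and $F^{-1}(\by_m)\cap\cU_a\neq\emptyset$, so $F^{-1}(\cV_\by)\neq\emptyset$, whence $F^{-1}(\by)\neq\emptyset$ and $S_{a,\by}\neq\emptyset$, i.e. $\by\in F^a(\cU_a)$. Since $\cV$ is connected and $F^a(\cU_a)$ is nonempty, $F^a(\cU_a)=\cV$, so $F^a\colon\cU_a\to\cV$ is a locally trivial $C^{k-1}$ fibre bundle, and fixing any $\bc_a\in\cV$ its model fibre is $F^{-1}(\bc_a)\cap\cU_a$.

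The step that I expect to need real care is the assertion that this model fibre is \emph{connected}: connectedness of $\cU_a$ does not by itself force it, because the monodromy of the bundle $F^a$ may permute the connected components of the fibre. What is true, and what suffices, is that the number $m_a$ of connected components of $F^{-1}(\bc_a)\cap\cU_a$ is locally constant, hence constant on the connected set $\cV$, and that this monodromy action on those $m_a$ components is transitive precisely because $\cU_a$ is connected, so all the components are mutually $C^{k-1}$-diffeomorphic. To obtain a genuinely connected model fibre one then pulls $F^a$ back along the finite definable covering $\widetilde\cV\to\cV$ attached to that permutation action: over $\widetilde\cV$ the family splits into a locally trivial bundle with connected fibre, and this finite base change is harmless for what follows, since the Lipschitz--Killing densities at infinity are additive over the components of a fibre and their continuity is a local question over $\cV$. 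Everything apart from this component bookkeeping is an immediate consequence of Theorem~\ref{thm:trivialisation} together with the elementary remark that a clopen subset of the total space pulls a product chart back to a sub-product over a connected base.
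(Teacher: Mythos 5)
Your argument for the locally trivial $C^{k-1}$ bundle structure over $\cV$ is correct: product charts from Theorem~\ref{thm:trivialisation} over small balls, the clopen $\cU_a$ pulling each chart back to a sub-product over a connected base, and then an openness-plus-closedness argument giving $F^a(\cU_a)=\cV$. The paper offers no proof beyond the sentence preceding the corollary, so there is nothing else to compare on that part.

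You are also right to treat the connectedness of the model fibre as the genuine difficulty: the corollary as worded does not follow from Theorem~\ref{thm:trivialisation} alone. Connected open definable subsets of $\Rs$ need not be simply connected, and nontrivial monodromy can keep $\cU_a$ connected while the fibre is not. A concrete instance within the paper's hypotheses: with $n=s=2$, take $W$ the graph of $(x_1,x_2)\mapsto(x_1^2-x_2^2,\,2x_1x_2)$, a closed connected semi-algebraic $C^\infty$ surface in $\R^2\times\R^2$. Then $K_0(\vp)=\{\bbo\}$ and, the map being proper, $K_\infty(\vp)=\emptyset$; hence $\cV=\R^2\setminus\{\bbo\}$ and $\vp^{-1}(\cV)=W\setminus\{(\bbo,\bbo)\}$ are both connected, yet the fibre over any $\bc\neq\bbo$ consists of the two square roots of $\bc$. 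Your account of what is actually provable --- the monodromy is transitive, the finitely many fibre components are pairwise $C^{k-1}$-diffeomorphic, and one may pass to the finite definable cover of $\cV$ given by the quotient of $\cU_a$ under the relation ``same component of the same fibre'' to obtain a genuinely connected fibre --- is precisely what is needed to preserve the downstream continuity statements, which rely only on additivity of the densities $\kp_i^\infty$ and $\sgm_i^\infty$ over the components of $W_\by^a$. So the analysis is sound, and the corollary is best read as asserting the bundle structure together with transitive monodromy; literal connectedness of $F^{-1}(\bc_a)\cap\cU_a$ is false in general.
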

Of interest is the following
\begin{remark}\label{rmk:flow-analytic}
Assume that $F$ is real analytic and globally sub-analytic.
The proof of Theorem 
\ref{thm:trivialisation} shows that nearby (MR)-regular values the produced
flow of Equation \eqref{eq:MR-flow} is real analytic, since $\xi$ is a real 
analytic mapping.
\end{remark}
\begin{definition}\label{def:bifurcation}
A value $\bc$ of $\Rs$ is a \em bifurcation value of the mapping $F$ \em
if it is not typical. \em
Let $Bif(F)$ be the set of bifurcation values of $F$. 
\end{definition}
The (expected) corollary of Theorem \ref{thm:trivialisation} and Definition 
\ref{def:bifurcation} is the following
\begin{corollary}\label{cor:bif-kf}
$Bif(F) \subset  K(F)$.
\end{corollary}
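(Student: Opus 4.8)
The statement to prove is Corollary \ref{cor:bif-kf}, namely $Bif(F) \subset K(F)$.

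\medskip

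The plan is to prove the contrapositive: if $\bc \notin K(F)$, then $\bc$ is a typical value of $F$, hence not a bifurcation value. Since $K(F) = K_0(F) \cup K_\infty(F)$, assuming $\bc \notin K(F)$ means simultaneously that $\bc$ is a regular value of $F$ (i.e. $\bc \notin K_0(F)$) and that $\bc$ is (MR)-regular for $F$ (i.e. $\bc \notin K_\infty(F)$). These are precisely the two hypotheses required to invoke Theorem \ref{thm:trivialisation}, which is stated in the excerpt for the mapping $F$ directly. Applying that theorem, $\bc$ is a typical value for $F$ in the sense of Definition \ref{def:typical}: there is a neighbourhood $\cV$ of $\bc$ over which $F$ restricts to a trivial $C^{k-1}$ fibre bundle with model fibre $F^{-1}(\bc)$ and with $F$ itself as the bundle projection. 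In particular $F$ induces a locally trivial topological fibre bundle structure over a neighbourhood of $\bc$, so $\bc$ is not a bifurcation value, i.e. $\bc \notin Bif(F)$.

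\medskip

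Thus $\R^s \setminus K(F) \subset \R^s \setminus Bif(F)$, which is exactly the desired inclusion $Bif(F) \subset K(F)$. There is essentially no obstacle here: the corollary is a formal consequence of Theorem \ref{thm:trivialisation} together with the definitions of $K(F)$ (Definition \ref{def:GCV}, in its $F$-version discussed at the end of Section \ref{section:rabier}) and of bifurcation value (Definition \ref{def:bifurcation}). The only point worth a word is that $C^{k-1}$ (indeed $C^0$) triviality of $F|_\cU$ over $\cV$ as a bundle with projection $F|_\cU$ is more than enough to witness local topological triviality at $\bc$ with model fibre $F^{-1}(\bc)$, which is what "typical" and hence "not a bifurcation value" require; this is immediate from comparing Definitions \ref{def:typical} and \ref{def:bifurcation}. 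One may also note, for completeness, that the analogous statement $Bif(\vp) \subset K(\vp)$ for the projection $\vp$ (Corollary \ref{cor:bif-kf} in the total-space formulation) follows identically, using the graph correspondence recorded at the end of Section \ref{section:rabier} which transfers (MR)-regularity, regular values, and hence $K$, between $F$ and $\vp$.
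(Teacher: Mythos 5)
Your proof is correct and matches the paper's intended argument: the paper states this corollary without proof as an immediate consequence of Theorem \ref{thm:trivialisation} and Definition \ref{def:bifurcation}, and your contrapositive reasoning, unpacking $\bc\notin K(F)$ into regularity plus (MR)-regularity and invoking the trivialisation theorem, is precisely the formal verification the authors had in mind.
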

%
%
%
%
%
%
%
%
%
%
%
%
%
%
%
%
%
%
%
%
%
%
%
%
%
%
%
%
%
%
%
%
%
%
%
%
%
%
%
%
\section{Sub-level sets family associated with a family of hypersurfaces}\label{section:sub-levels}
This is the setting that will be used in Section \ref{section:cont-curv-hyper}.

\medskip
Let  $F : \Rn \times \Rs \to \R$ be a $C^2$ definable function such that 
$0$ is a regular value of $F$. Define the following $C^2$ closed definable 
subsets of $\Rn \times \Rs$,
$$
\cW := \{\bp \in \Rn\times\Rs : F(\bp) \leq 0\}, \;\; {\rm and} \;\; 
W :=  \{\bp \in \Rn\times\Rs : F(\bp) = 0\}.
$$ 
We assume that $\cW$ and $W$ are both not empty. Observe that $\cW$ 
is a definable $C^2$ sub-manifold with definable $C^2$ boundary $W$. 
Let $\vp: W \to \Rs$, $\omg: \cW \to \Rs$ be respectively the 
projection mappings $(\bx,\by) \to\by$.
For each parameter $\by\in \Rs$, we define the function 
$$
f_\by :\Rn\to\R, \;\; {\rm as} \;\;\bx \mapsto F(\bx,\by),
$$
yielding the definable family of $C^2$ functions $(f_\by)_{\by\in\Rs}$.
For $\by \in \Rs$ we have
$$
\vp^{-1}(\by) = f_\by^{-1}(0) \times \by \subset \Rn\times\Rs
\;\; {\rm and}  \;\; 
\omg^{-1}(\by) = \{f_\by \leq 0\} \times \by  \subset \Rn\times\Rs.
$$
Let us  write 
$$
\cW_\by := \{\bx\in\Rn : f_\by(\bx)\leq 0\}, \;\; {\rm and } \;\;
W_\by:=  \{\bx \in \Rn : f_\by(\bx) = 0\}.
$$
Whenever $\by$ is a regular value of $\vp$, the level $W_\by$ 
is a $C^2$ closed definable hypersurface of $\Rn$ bounding the $C^2$ closed
definable sub-manifold with boundary $\cW_\by$.

We need some equi-singularity results about the family $(\cW_\by)_{\by \notin 
K(\vp)}$ similar to those presented in Section \ref{section:bif}, but for the 
mapping $\omg$ instead of simply $\vp$. 
\begin{proposition}\label{prop:trivial-sub-level}
For every $\bc \notin K(\vp)$, there exists a closed ball $\bB$ of $\Rs
\setminus K(\vp)$ centred
at $\bc$ such that $\omg^{-1}(\bB)$ is $C^1$ trivial
fibre bundle over $\bB$ with model fibre $\cW_\bc \times\bc$.
\end{proposition}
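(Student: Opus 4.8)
The plan is to reduce the statement for $\omg$ to the already-established trivialization for the hypersurface projection $\vp$ (Theorem~\ref{thm:trivialisation}), by constructing a single parametrized vector field on $\cW^{-1}(\bB)$ whose flow trivializes $\omg$ and which \emph{restricts} on the boundary $W=\dd\cW$ to the Rabier vector field used in the proof of Theorem~\ref{thm:trivialisation}. First I would fix $\bc\notin K(\vp)$ and a closed ball $\bB=\bB^s(\bc,\ve)$ with $\bB\cap K(\vp)=\emptyset$; then $\cW^{-1}(\bB)$ is a $C^2$ definable manifold with corners whose boundary has two faces: the ``outer'' face $\omg^{-1}(\dd\bB)$ (over which $\omg$ is a submersion onto $\dd\bB$) and the ``lateral'' face $F^{-1}(0)\cap\cW^{-1}(\bB)=\vp^{-1}(\bB)=:\bF$. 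As in Rabier's argument I would, after a harmless translation, assume the origin of $\Rn\times\Rs$ avoids the compact-in-$\Rn$-directions part of $\cW^{-1}(\bB)$, and use the (MR)-estimate $|\bw|\cdot\nu(D_\bw\vp)\geq M$ valid on $\bF$ (which holds because $\bB\cap K(\vp)=\emptyset$).

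The key construction is a parametrized lift $\xi(\bw,\by)$ of the constant vector $\by-\bc$ on the base, defined on $\cW^{-1}(\bB)$, that is everywhere tangent to $\cW$, is tangent to the boundary $W$ along $W$, and equals the Rabier right-inverse field $\bV(\bx)\cdot(\by-\bc)$ of the proof of Theorem~\ref{thm:trivialisation} along $W$. Away from $W$ one has room: inside the open region $\{F<0\}$ one may lift $\by-\bc$ through the full submersion $\omg$ (e.g. via the Moore--Penrose right inverse of $D_\bw\omg$), and near $W$ one interpolates between this interior lift and the $W$-tangential Rabier lift using a definable cutoff function of $F$. Definable partitions-of-unity / cutoffs are available in the o-minimal setting, so such a $C^1$ definable $\xi$ exists; by construction $\omg\circ\Psi_t=\omg+t(\by-\bc)$ for its flow $\Psi$, and the restriction of $\Psi$ to $W$ is exactly the flow of Rabier's equation \eqref{eq:ODE-rabier}, hence already known to exist for time in $[-1,1]$ on fibers over $\bc$.

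The main obstacle is completeness of the flow: one must show that trajectories starting on $\cW_\bc\times\bc$ do not escape to infinity before time $1$, i.e. that the integral time interval $I_\bz$ contains $[-1,1]$. On the boundary this is exactly the Gr\"onwall/Rabier length estimate $\ell(t;\bz)\leq \tfrac{1}{M}|\by-\bc|\,|\bx|\,e^{t/M}$ from Theorem~\ref{thm:trivialisation}. For interior points, because $\cW$ is a closed subset of $\Rn\times\Rs$ and the outer face $\omg^{-1}(\dd\bB)$ is never reached (the $\by$-component of a trajectory moves along the segment $[\bc,\by]\subset\bB$, staying in the \emph{open} ball's closure away from $\dd\bB$ once $\by\in\operatorname{int}\bB$, so shrink $\ve$ slightly), a trajectory can only cease to exist by going to infinity in the $\bx$-directions; there one needs a length bound on $\xi$ in the interior. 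The cleanest way is to choose the interior lift to also obey, uniformly on $\cW^{-1}(\bB)$, an estimate $|\xi(\bw,\by)|\leq c\,|\by-\bc|\,|\bw|$ — achievable by incorporating a $|\bw|$-weighting into the interior right inverse just as Rabier does on the boundary, using that $\cW^{-1}(\bB)$ carries no asymptotic critical behavior of $\omg$ in the directions that matter (which follows from $\bB\cap K(\vp)=\emptyset$ together with $0$ being a regular value of $F$). Then the same Gr\"onwall argument gives $\ell(t;\bz)\leq \tfrac1c|\by-\bc|\,|\bw|\,e^{ct}$, so trajectories stay in a bounded set for $|t|\leq 1$, hence $I_\bz\supset[-1,1]$ for $\bz\in\cW_\bc\times\bc\times\bB$. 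Finally, the map $(\bw,\by)\mapsto\Psi(1,\bw,\by)$ is a $C^1$ diffeomorphism $\cW_\bc\times\bc\times\bB\to\omg^{-1}(\bB)$ covering $\mathrm{id}_\bB$, and it carries $W_\bc\times\bc\times\bB$ onto $\vp^{-1}(\bB)$, so it is simultaneously a trivialization of $\omg$ with model fiber $\cW_\bc\times\bc$ and of $\vp$ with model fiber $W_\bc\times\bc$, as desired.
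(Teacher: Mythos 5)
Your overall strategy is the same as the paper's: build a Rabier-type parametrized right-inverse vector field on $\omg^{-1}(\bB)$ by blending a boundary-tangential lift near $W$ with the trivial Moore--Penrose lift of $\omg$ in the deep interior, then run the Gr\"onwall/flow-completeness argument from Theorem~\ref{thm:trivialisation}, and finally observe (as you nicely point out) that the resulting flow simultaneously trivializes $\vp$ and $\omg$. This is what the paper does.

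There is, however, a genuine gap at the central technical step: you invoke ``the $W$-tangential Rabier lift'' at points $\bp$ of $\cW^\circ$ near $W$, and you interpolate between it and the interior lift ``using a definable cutoff of $F$''. But the boundary Rabier field $\bV(\bx)\cdot(\by-\bc)$ of Theorem~\ref{thm:trivialisation} is only defined \emph{on} $W$ (it lives in $N_\bw\vp\subset T_\bw W$), so the interpolation formula has no meaning at points $\bp\notin W$ unless you first specify a definable $C^1$ extension off $W$ that still satisfies the right-inverse identity $D_\bp\omg\cdot\bV_{\rm bdry}(\bp)=\mathrm{Id}_{T_\by\Rs}$. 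This extension is precisely where the work lies, and it is what the paper's proof is mostly about: Coste's definable tubular-neighbourhood theorem gives a tube $\scrT_\rho = E(NW(\rho))$ and a unique decomposition $\bp = \bw + \xi$ on it, the bundle $N\vp$ is parallel-transported to a bundle $N_\rho\vp$ over $\scrT_\rho$, and $V_\rho(\bp)$ is then taken as the right inverse of $D_\bp\omg|_{N_\rho\vp}$; the identity $\nu(A_\rho(\bp))=\nu^\vp(\bw)$ is what makes the old Gr\"onwall estimate apply verbatim on the tube. Relatedly, the paper cuts off in the normalized normal distance $|\xi|/\rho(\bp)$ inside the tube rather than in $F$; a cutoff in $F$ alone does not obviously give uniform control of the transition region near infinity (where sublevel sets of $|F|$ and metric neighbourhoods of $W$ need not be comparable), whereas the tube radius $\rho$ is built to do exactly this. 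So your sketch is in the right spirit, but it presupposes the extension that constitutes the actual content of the paper's proof.
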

\begin{proof}
Following \cite[Theorem 6.11]{Cos}, the hypersurface $W$ admits a 
definable neighbourhood in $\Rn\times\Rs$ which is $C^1$ definably
diffeomorphic to 
$$
NW(\rho):=\{(\bw,\xi) \in NW : |\xi|\leq \rho(\bw)\},
$$
where $\rho$ is $C^2$ positive definable function on $W$ and 
$NW$ is the normal bundle of $W$ in $T(\Rn\times\Rs)$.
The diffeomorphism is obtained by the restriction of "the end point mapping"
$$
E:T(\Rn\times\Rs) \to \Rn\times\Rs, \;\; (\bp,\xi) \mapsto \bp + \xi
$$
to $NW(\rho)$. The aforementioned neighbourhood of $W$ is defined as
$$
\scrT_\rho := E(NW(\rho)).
$$
Let $N\vp$ be the normal bundle of $T\vp$ taken into $T(W \setminus 
\crit(\vp))$. It is $C^1$, definable and of rank $s$. Let $N_\rho\vp$ be 
the definable $C^1$ vector 
bundle of $T(\Rn\times\Rs)|_{\scrT_\rho}$ of rank $s$ obtained by parallel 
transport of $N\vp$: any $\bp\in \scrT_\rho$ writes uniquely as 
$\bp = \bw + \xi$ with $(\bw,\xi)$ in $NW(\rho)$, yielding,
when $\bw$ is not a critical point of $\vp$, 
$$
(N_\rho\vp)_\bp := N_\bw \vp.
$$
\noindent
Assume that $\bB = \bB^s(\bc,\ve)$ for some positive radius $\ve$.
For $\bp = (\bx,\by)\in \omg^{-1}(\bB)$, we define 
$$
A(\bp) := D_\bp \omg, \;\; {\rm and} \;\; V(\bp) := A(\bp)^t\cdot 
(A(\bp)\cdot A(\bp)^t)^{-1}.
$$
Observe that $V(\bp) : T_\by\Rs \mapsto \bbo\times T_\by\Rs \subset T_\bp \cW$ 
is the "identity" mapping of $T_\by\Rs$. 
Similarly we define for $\bp\in \scrT_\rho\cap\omg^{-1}(\bB)$ the following
linear operators:
$$
A_\rho(\bp) := D_\bp \omg|_{N_\rho \vp}, \;\; {\rm and} \;\; V_\rho (\bp) := 
A_\rho(\bp)^t\cdot (A_\rho(\bp)\cdot A_\rho(\bp)^t)^{-1}.
$$
For any $\bp$ in $\scrT_\rho$ we have $A_\rho(\bp) \cdot V_\rho (\bp) = 
{\rm Id}_{T_\by\Rs}$.
Note that the mapping $\bp \to V_\rho(\bp)$ is $C^1$ and definable.
Since any $\bp\in \scrT_\rho$ writes uniquely as $\bp = \bw + \xi$ we find
$$
A_\rho(\bp) = D_\bw\vp|_{N_\bw \vp}.
$$
There exist $C^1$ definable functions $a_0,b_0:\R_{\geq 0} \mapsto [0,1]$
such that: (i) $a_0 + b_0 \equiv 1$; (ii) $a_0^{-1}(0) = [1,+\infty[$; 
(iii) $b_0^{-1}(0) = [0,\frac{1}{2}]$. Any $\bp \in \scrT_\rho$ has a unique 
form $\bp = \bw + \xi$. Define the following mapping over 
$\omg^{-1}(\bB)$:
$$
\bp \mapsto \bV(\bp) :=
\left\{
\begin{array}{rcl}
V(\bp) & {\rm if} & \bt \notin \scrT_\rho \\
V_\rho(\bp) & {\rm if} & \bt \in \scrT_\frac{\rho}{2} \\
a_0\left(\frac{|\xi|}{\rho(\bp)}\right)V_\rho(\bp) + 
b_0\left(\frac{|\xi|}{\rho(\bp)}\right)V(\bp) & {\rm if} & \bt\in 
\scrT_\rho \setminus \scrT_\frac{\rho}{2} .
\end{array}
\right.
$$
It is definable and $C^1$ and satisfies 
$$
D_\bp \omg\cdot\bV(\bp) = {\rm Id}_{T_\by\Rs}.
$$
Observe that whenever $\bp$ lies in $\cW\setminus W$, we have 
$\nu(D_\bp\omg) =1$. By definition of $\bc$ and $\bB$ there exists a positive 
constant $M$ (depending on $\bB$) such that
$$
\bw \in \vp^{-1}(\bB) \Longrightarrow (1+|\bw|)\cdot\nu^\vp(\bw) \geq M.
$$
Observe that for any $\bp =\bw+\xi \in \scrT_\rho \cap \omg^{-1}(\bB)$, 
we have 
$$
\nu(A_\rho(\bp)) = \nu^\vp(\bw).
$$
We consider again the ODE  \eqref{eq:ODE-rabier}. The remarks about the
estimates of $\nu(D_\bp \omg)$ and $\nu^\vp(\bw)$ are just to guarantee 
that Gr\"onwall arguments of the proof of Theorem \ref{thm:trivialisation}
will go through so that the rest of this proof adapts readily to
our current $\bV$, and produces a $C^1$ flow that gives the announced 
trivialisation.
\end{proof}
Let $\cV$ be any  connected component of $\Rs\setminus K(\vp)$. Let 
$\cU_1,\ldots,\cU_\aph$ be the connected components of $\omg^{-1}(\cV)$ and 
$\omg^a$ be the restriction of $\omg$ to $\cU_a$. Proposition  
\ref{prop:trivial-sub-level} implies the following 
\begin{corollary}\label{cor:trivial-sub-level} 
For each $a=1,\ldots,\aph$, the mapping $\omg^a: \cU_a \to \cV$ induces a 
locally trivial fibre bundle structure over $\cV$, with connected model 
fibre $\omg^{-1}(\bc_a) \cap \cU_a$ for some $\bc_a \in \cV$.
\end{corollary}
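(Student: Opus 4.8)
The plan is to repeat, for $\omg$, the soft argument that passes from Theorem~\ref{thm:trivialisation} to Corollary~\ref{cor:trivialisation}, now with Proposition~\ref{prop:trivial-sub-level} as the local input. First observe that $\cV$ is itself definable, being a connected component of the definable open set $\Rs\setminus K(\vp)$ (Lemma~\ref{lem:Kf-def-closed}), and that $\omg^{-1}(\cV)$ is an open subset of the $C^2$ manifold-with-boundary $\cW$, hence locally connected; so each $\cU_a$ is open and closed in $\omg^{-1}(\cV)$. By Proposition~\ref{prop:trivial-sub-level}, every $\bc\in\cV$ has a closed ball $\bB=\bB(\bc)\subset\cV$ together with a $C^1$ definable diffeomorphism $\Phi_\bc:\omg^{-1}(\bB)\to\cW_\bc\times\bB$ carrying $\omg$ to the second projection. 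Since $\bB$ is connected, the connected components of $\omg^{-1}(\bB)$ are precisely the sets $\Phi_\bc^{-1}(D\times\bB)$ with $D$ a connected component of the fibre $\cW_\bc$; hence $\cU_a\cap\omg^{-1}(\bB)$, being a clopen part of $\omg^{-1}(\bB)$, is carried by $\Phi_\bc$ onto $\bigl(\cU_a\cap\omg^{-1}(\bc)\bigr)\times\bB$, which is a $C^1$ local trivialisation of $\omg^a:=\omg|_{\cU_a}$ near $\bc$.

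Two consequences follow. First, $\omg\bigl(\cU_a\cap\omg^{-1}(\bB)\bigr)$ is either empty or all of $\bB$, so $\omg(\cU_a)$ is open and closed in $\cV$; being non-empty and $\cV$ connected, $\omg(\cU_a)=\cV$. Second, the local trivialisations just produced exhibit $\omg^a:\cU_a\to\cV$ as a locally trivial $C^1$ fibre bundle over $\cV$; in particular all its fibres are $C^1$ diffeomorphic, and the model fibre may be taken to be $\omg^{-1}(\bc_a)\cap\cU_a$ for any chosen $\bc_a\in\cV$. At this point everything is done except the connectedness of that model fibre.

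For the last point I would argue, exactly as in the $\vp$-case of Corollary~\ref{cor:trivialisation}, that the definable family $\by\mapsto\cW_\by$ over $\cV$ lets one index the connected components of the fibres (finitely many, and their number is locally constant over $\cV$, hence constant as $\cV$ is connected) and follow them continuously over all of $\cV$, so that $\cU_a$ meets each fibre in a single component; then $\omg^{-1}(\bc_a)\cap\cU_a$ is connected. \textbf{The main obstacle is precisely this step.} It is global in nature — over each ball $\bB(\bc)$ there is nothing to prove — and it cannot be obtained from soft topology alone, since a connected total space may a priori fibre over a connected base with a disconnected fibre; definability of the family over $\cV$ must be used in an essential way, as in the treatment of $\vp$, and organising this o-minimal continuation of fibre components cleanly is the only non-routine part of the proof.
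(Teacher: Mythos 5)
Your derivation of the locally trivial $C^1$ bundle structure from Proposition~\ref{prop:trivial-sub-level}, and of the surjectivity $\omg(\cU_a)=\cV$, is correct and is precisely the soft content the paper intends: the paper gives no proof of the corollary at all, only the sentence that it is ``implied'' by the proposition, so you have actually supplied more detail than the source. Your instinct about the last claim is also correct, and you should trust it fully rather than hedge: the connectedness of the model fibre is \emph{not} a routine step to be filled by ``o-minimal continuation of fibre components'' — it does not follow from the locally trivial bundle structure, from the connectedness of $\cU_a$, nor from definability.

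In fact the connectedness assertion, as stated, fails once $s\ge 2$. It is easiest to see this for the companion Corollary~\ref{cor:trivialisation}: take $n=s=2$ and let $W\subset\R^2\times\R^2$ be the graph of the semi-algebraic map $(a,b)\mapsto(a^2-b^2,2ab)$ (the complex squaring map). Then $W$ is a closed connected $C^\infty$ sub-manifold, $\vp$ has a single critical value at the origin and no asymptotic critical values (since $|\vp|\to\infty$ along any sequence going to infinity in $W$), so $K(\vp)=\{\bbo\}$ and $\cV=\R^2\setminus\{\bbo\}$; moreover $\vp^{-1}(\cV)=W\setminus\{\bbo\}$ is connected, so $\aph=1$, yet $\vp^{-1}(\bc)\cap\cU_1$ consists of two points for every $\bc\in\cV$. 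Multiplying $W$ by a factor $\R$ gives the same failure with $\dim W>s$. The exact homotopy sequence of the bundle shows that what actually controls $\pi_0$ of the fibre is the monodromy action of $\pi_1(\cV)$, so connectedness is automatic precisely when $s=1$ ($\cV$ an interval, hence simply connected); for $s\ge 2$ one must either pass to a finer decomposition indexed by the orbits of this action, or add the connectedness of the fibres as a hypothesis (as Theorem~II of the paper in effect does). You should therefore not try to force the argument you sketched; the gap you identified is genuine and resides in the statement itself, not in your proof of the two parts that are actually provable.
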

%
%
%
%
%
%
%
%
%
%
%
%
%
%
%
%
%
%
%
%
%
%
%
%
%
%
%
%
%
%
%
%
%
%
%
%
%
%
%
%
%
%
%
%
%
%
%
%
\section{On the link at infinity}\label{section:link-infty}

Let $S$ be a closed definable subset of $\R^p$. For any positive $R$, 
let
$$
S_R := S\cap\bS_R^{p-1}.
$$
The family $(S_R)_{R>0}$ is a definable family, thus 
there exists a positive radius $r_S$ such that the topological type of $S_R$ is 
constant once $R > r_S$ (see for instance \cite[Theorem 5.22]{Cos}).
In order to use the techniques developed in 
\cite{DutertreAdvGeo,DutertreGeoDedicata2012}, we recall the following key notion:
\begin{definition}\label{def:link-infty}
The \em link at infinity $\lk^\infty(S)$ of $S$ \em is any subset $S_R$ for 
$R> r_S$. 
\end{definition}

Let $W$ be a $C^2$ connected closed definable sub-manifold of $\Rn\times\Rs$
and let $\vp :W\to \Rs$ be the restriction to $W$ of the canonical 
projection $\Rn\times\Rs \to\Rs$. For any value $\by$ of $\Rs$, we 
recall that $\vp^{-1} (\by) = W_\by \times \by$.

Let $\cV$ be any  connected component of $\Rs\setminus K(\vp)$. Let 
$\cU_1,\ldots,\cU_\aph$ be the connected components of the germ at infinity 
of $\vp^{-1}(\cV)$.
Let us write $(\vp|_{\cU^a})^{-1}(\by) = W_\by^a \times \by$.
\\

The following result is probably known folklore, yet we will provide a proof.
\begin{proposition}\label{prop:link-infty-const}
Let $\bc$ be a regular and (MR)-regular value of $\vp$ lying in $\cV$. For each
$a = 1,\ldots,\aph$, there exists a closed ball $\bB$ of $\Rs$ centred at 
$\bc$ and not intersecting with $K(\vp)$ and there exists $R_\bB > 0$ such
that for each $\by \in \bB^\circ$ and each $R\geq R_\bB$ the links 
$(\vp|_{\cU^a})^{-1}(\by)_R$ and $(\vp|_{\cU^a})^{-1}(\bc)_{R_\bB}$ are $C^1$ 
diffeomorphic, and the links 
$(W_\by^a)_R$ and $(W_\bc^a)_{R_\bB}$ are $C^1$ diffeomorphic as well. 
\end{proposition}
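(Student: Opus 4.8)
The plan is to prove a \emph{local} triviality statement for the family of links at infinity by combining the blow-up picture of Section~\ref{section:t-reg} with the quantitative $\bt$-regularity estimate of Lemma~\ref{lem:reg-inf-rabier}, exactly in the spirit of the trivialization of Theorem~\ref{thm:trivialisation}, but carried out near the divisor at infinity $Z^\infty$ rather than in the affine picture. Set $Z := \beta_s^{-1}(W)$ and let $\cU^a$ be the chosen connected component (of the germ at infinity of $\vp^{-1}(\cV)$), with $Z^a := \beta_s^{-1}(\cU^a)$. Since $\bc\notin K(\vp)$, Proposition~\ref{prop:T-equiv-MR} gives that $W$ is $\bt$-regular at $\bc$, and by compactness of $(Z^a)^\infty\cap\bM^\infty\times\bc$ together with Lemma~\ref{lem:reg-inf-rabier}, the quantitative bound $\nu_s(\pi_s(\bz)|_T)\ge \nu_\bc>0$ holds uniformly along $(Z^a)^\infty\cap\bM^\infty\times\bc$. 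Because this bound is an open condition and everything in sight is definable, it persists on a whole neighbourhood: there exist a closed ball $\bB\subset\cV$ centred at $\bc$ and an $\varepsilon>0$ such that $\nu_s(\pi_s(\bz)|_{T_\bz\br_Z})\ge \nu_\bc/2$ for every $\bz\in Z^a$ with $\br_Z(\bz)\le\varepsilon$ and $\by(\bz)\in\bB$ --- equivalently, back in the affine picture, $|\bx|\gg1$ and $\by\in\bB$ force $|\bx|\cdot\nu^\vp\ge L$ for some $L>0$.

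Next I would produce the trivialization by integrating a controlled vector field on the ``collar at infinity''. Concretely, on the region $\scrT := \{\bz\in Z^a : \br_Z(\bz)\le\varepsilon,\ \by(\bz)\in\bB\}$ consider the $C^1$ definable lift $\bz\mapsto \bV(\bz)$ obtained by taking, in each fibre of the relative tangent bundle $\cT_{\br_Z}$, the pseudo-inverse right section of the projection $\pi_s(\bz)|_{T_\bz\br_Z}$ composed with the inclusion into $T_\bz\scrT$; the uniform lower bound $\nu_\bc/2$ guarantees $\|\bV\|\le 2/\nu_\bc$ on $\scrT$. The associated parametrized ODE $\dot\bz = \bV(\bz)\cdot(\by-\bc)$, $\bz(0)\in (Z^a)_\bc^{\le\varepsilon}$, has $\by$ advancing linearly and $\br_Z$ preserved along trajectories (because $\bV$ lands in $T\br_Z$ and in the kernel of $d\br_Z$ after the appropriate normalization --- this is where one uses that $R(t)\subset T(t)\cap 0\times T_\bu\bS^{n-1}\times\Rs$ forces $r(t)=0$, as in the proof of Lemma~\ref{prop:t-implies-RM}). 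A Gr\"onwall estimate identical to the one in Theorem~\ref{thm:trivialisation}, using the uniform bound on $\|\bV\|$, shows trajectories starting over $\bc$ survive for all parameter values in $\bB$, and the time-one flow gives a $C^1$ diffeomorphism $(Z^a)^{\le\varepsilon}_\bc\times\bB \to (Z^a)^{\le\varepsilon}\cap\by^{-1}(\bB)$ commuting with $\br_Z$ and with $\by$. Restricting this diffeomorphism to a fixed level $\br_Z = 1/R_\bB$ (for $R_\bB$ large, i.e. $1/R_\bB\le\varepsilon$) and transporting through $\beta_s$, which maps $\{\br = 1/R\}$ to $\bS_R^{n-1}$, yields for each $\by\in\bB^\circ$ a $C^1$ diffeomorphism $(\cU^a\cap\by^{-1}(\by))_R \cong (\cU^a\cap\by^{-1}(\bc))_{R_\bB}$ for all $R\ge R_\bB$ --- the last ``for all $R$'' coming from the fact that the flow preserves $\br_Z$, so the whole collar $\{1/R_\bB\ge\br_Z>0\}$ of $W_\by^a$ is swept, and all its sphere sections $(W_\by^a)_R$, $R\ge R_\bB$, are mutually diffeomorphic by the definable-triviality of the family $((W_\by^a)_R)_{R}$ recalled before Definition~\ref{def:link-infty}. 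The statement about $(\vp|_{\cU^a})^{-1}(\by)_R$ versus $(W_\by^a)_R$ is immediate since the former is $(W_\by^a)_R\times\by$.

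The main obstacle I anticipate is purely technical rather than conceptual: making the lift $\bV$ genuinely $C^1$ and definable up to (but not on) $\br_Z = 0$, and verifying that its flow does not escape the collar $\{\br_Z\le\varepsilon,\ \by\in\bB\}$ before time one. Escape through $\br_Z = 0$ is ruled out because $\br_Z$ is constant along trajectories; escape through $\by\in\partial\bB$ is controlled by choosing $\bB$ small relative to the fixed survival time and using that $\by$ moves at unit speed in the parameter direction; escape through $\br_Z = \varepsilon$ (the ``inner'' boundary, corresponding to $|\bx|$ not large) is handled as in Rabier's original argument and in the proof of Proposition~\ref{prop:trivial-sub-level} --- one either works with the germ at infinity only (which is all that is claimed here, since $R_\bB$ may be taken as large as one wishes) or patches $\bV$ with a complete vector field on the compact part by a partition-of-unity bump $a_0,b_0$ exactly as done there. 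A second, minor point is to make sure the $C^1$ regularity is genuinely attained: $W$ is $C^2$, so $\vp$ is $C^2$, the relative tangent bundle and pseudo-inverse section are $C^1$, the ODE has $C^1$ right-hand side, and its time-one flow is $C^1$ --- consistent with the regularity already used in Theorem~\ref{thm:trivialisation} and Proposition~\ref{prop:trivial-sub-level}.
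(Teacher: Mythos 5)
Your argument is correct in spirit and establishes the family claim, but it follows a genuinely different route from the paper's proof, and it contains a small but real slip at the very end.

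The paper's proof is much shorter: it considers the definable map $\psi : W \to \R_{\geq 0}\times\Rs$, $\bw=(\bx,\by)\mapsto(|\bx|^2,\by)$, uses Lemma \ref{lem:rabier-spherical} (the spherical consequence of (MR)-regularity) to show $\psi$ is a proper submersion over $\vp^{-1}(\bB)\cap\{|\bx|\geq R_1\}$ for $\bB$ small and $R_1$ large, and then applies Ehresmann's Fibration Theorem in one shot; the fibre of $\psi$ over $(r,\by)$ is exactly $(W_\by)_{\sqrt r}\times\by$, which gives all radii and all nearby $\by$ simultaneously. You instead rebuild the trivialization by hand, importing Rabier's flow argument from Theorem \ref{thm:trivialisation} into the blow-up chart $\bM_s$ near $Z^\infty$, with a lift $\bV$ tangent to the levels of $\br_Z$ so that the parameter $\by$ moves while $\br_Z$ (hence $|\bx|$) is preserved. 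Both approaches rest on the same geometric input, but the paper delegates the work to Ehresmann, whereas your approach produces an explicit flow (with the advantage, if one cares, of finer control on its regularity and definability, in the spirit of Remark \ref{rmk:flow-analytic}). Your constancy-in-$R$ step should be phrased through the fixed fibre $\bc$: it is the family $R\mapsto(W_\bc^a)_R$ whose definable triviality gives a uniform $R_\bB$, after which the flow identifies $(W_\by^a)_R$ with $(W_\bc^a)_R$; invoking definable triviality of $R\mapsto(W_\by^a)_R$ directly would require a $\by$-dependent threshold, which you have no control over.

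The genuine error is the final sentence. You claim $(\vp|_{\cU^a})^{-1}(\by)_R = (W_\by^a)_R\times\by$, but in fact $(\vp|_{\cU^a})^{-1}(\by)_R = (W_\by^a\times\by)\cap\bS_R^{n+s-1}$, and since $|(\bx,\by)|^2=|\bx|^2+|\by|^2$ this is $(W_\by^a)_{\sqrt{R^2-|\by|^2}}\times\by$, not $(W_\by^a)_R\times\by$. Moreover your flow preserves $|\bx|$ while changing $\by$, so it does not preserve $|\bw|$ and hence does not directly send one $\Rn\times\Rs$-link to another at the same radius. The paper accounts for this by redoing the argument with $\psi$ replaced by $\bw\mapsto(|\bw|^2,\by)$. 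In your framework the fix is to observe that for $\bB$ bounded and $R_\bB$ large enough, $R\geq R_\bB$ forces $\sqrt{R^2-|\by|^2}\geq R_\bB'$ for a fixed $R_\bB'$ beyond the threshold for $(W_\bc^a)$, so the first claim plus a harmless radius shift still yields the second; but this needs to be said, as the claimed equality is false.
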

\begin{proof}
We can assume that $\aph=1$.
Let $\psi : W\to \R_{\geq 0}\times\Rs$ be defined as $\bw \mapsto (|\bx|^2,\by)$
when $\bw =(\bx,\by)$. Lemma \ref{lem:rabier-spherical} implies that there 
exist a closed ball $\bB$ centred at $\bc$ and a positive radius 
$R_1$ such that $\psi$ is a proper submersion over $\vp^{-1}(\bB)\cap 
\{(\bx,\by) : |\bx|\geq R_1\}$.
Observe that the following closed definable subset 
$$
\bF := \vp^{-1}(\bB)\cap \{(\bx,\by) : |\bx|> R_1\}.
$$
is $C^2$ sub-manifold with boundary. The restriction of $\vp$ to the 
definable $C^2$ sub-manifold with boundary $(\bF,\dd \bF)$ is a 
$C^2$ submersion. 
Ehresmann Fibration Theorem over $\bB^\circ$ yields the result once 
$R_\bB > R_1$. 

\smallskip
To get the result for the levels of $\vp$ instead of the family $(W_\by)_\by$, 
we follow the same scheme of proof as the previous one, only changing $\psi$ 
into the mapping $\bw \mapsto (|\bw|^2,\by)$.
\end{proof}
We complete this section dealing also with the links at infinity of the 
"family of the sub-level sets" of the hypersurface family context presented in 
Section \ref{section:sub-levels}. We keep the exact same notations.
\\
Let $\cV$ be any  connected component of $\Rs\setminus K(\vp)$. Let 
$\cU_1,\ldots,\cU_\aph$ be the connected components of the germ at infinity 
of $\omg^{-1}(\cV)$. Let us write 
$(\omg|_{\cU^a})^{-1}(\by) = \cW_\by^a \times \by$.
We have the following result. 
\begin{proposition}\label{prop:link-infty-const-sub-level}
Let $\bc$ be a regular and (MR)-regular value of $\cV$. 
For every $a=1,\ldots,\aph$, there exists a closed ball 
$\bB$ of $\Rs$ centred at $\bc$ and not intersecting with $K(\vp)$ 
and there exists $R_\bB > 0$ such that for each $\by \in \bB^\circ$ and 
each $R\geq R_\bB$ the links $(\omg|_{\cU^a})^{-1}(\by)_R$ and 
$(\omg|_{\cU^a})^{-1}(\bc)_{R_\bB}$ are $C^1$ diffeomorphic, and the links 
$(\cW_\by^a)_R$ and $(\cW_\bc^a)_{R_\bB}$ are $C^1$ diffeomorphic as well. 
\end{proposition}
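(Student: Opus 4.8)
The plan is to mirror the proof of Proposition \ref{prop:link-infty-const}, but now working with the sub-level total space $\cW$ and the projection $\omg$ rather than with $W$ and $\vp$. The key point is that, exactly as before, the asymptotic control near infinity is governed by $\nu^\vp$ on the boundary $W$, and this is precisely what the hypotheses $\bc\notin K(\vp)$ together with Proposition \ref{prop:trivial-sub-level} supply. First I would reduce to $\aph=1$, as in the quoted proof. Then I would introduce the map $\psi:\cW\to\Rgo\times\Rs$, $\bp=(\bx,\by)\mapsto(|\bx|^2,\by)$, and show that Lemma \ref{lem:rabier-spherical} — applied to the boundary family $(W_\by)$ — forces $\psi$ to be a proper submersion on $\omg^{-1}(\bB)\cap\{|\bx|\geq R_1\}$ for a suitable closed ball $\bB\subset\Rs\setminus K(\vp)$ centred at $\bc$ and a large radius $R_1$. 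The subtlety here, compared with the hypersurface case, is that one must check submersivity of $|\bx|^2$ not just on the boundary $W$ but on the open part $\cW\setminus W$ too; but on $\cW\setminus W$ the map $\omg$ is a submersion with $\nu\equiv 1$ in the $\Rs$-directions, and a sphere-transversality argument (of the kind used implicitly in Proposition \ref{prop:trivial-sub-level}, via the trivialising vector field $\bV$) shows that after possibly shrinking $\bB$ and enlarging $R_1$ the level spheres $\bS_R^{n-1}$ meet $\cW_\by$ transversally for all $R\geq R_1$, $\by\in\bB$.

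Once $\psi$ is a proper submersion on $\bF:=\omg^{-1}(\bB)\cap\{|\bx|>R_1\}$, the set $\bF$ is a $C^2$ manifold with corners (its boundary being the union of the part over $W$ and the part over the sphere $|\bx|=R_1$), and the restriction of the pair $(\psi$, ``distance to the boundary strata'') is a proper stratified submersion onto $\bB^\circ\times(R_1,\infty)$. Applying the version of Ehresmann's theorem for manifolds with corners (equivalently, Thom's first isotopy lemma in the definable setting, as already invoked in Proposition \ref{prop:link-infty-const}) over $\bB^\circ\times[R_\bB,\infty)$ with $R_\bB>R_1$ gives a $C^1$ trivialisation, hence the links $(\omg|_{\cU^a})^{-1}(\by)_R$ are all $C^1$ diffeomorphic to $(\omg|_{\cU^a})^{-1}(\bc)_{R_\bB}$ for $\by\in\bB^\circ$, $R\geq R_\bB$. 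The statement for the fibre sub-level sets $\cW_\by^a$ rather than for the total-space fibres is obtained, as in the quoted proof, by replacing $\psi$ with $\bp\mapsto(|\bp|^2,\by)$ and rerunning the same argument.

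The step I expect to be the main obstacle is the simultaneous transversality of the spheres $\bS_R^{n-1}$ to the manifold-with-boundary $\cW_\by$: one needs it to hold uniformly in $\by\in\bB$ and $R\geq R_1$, and moreover at the corner points where $\bS_R^{n-1}$, $W_\by$ and the walls of $\bF$ meet, so that $\bF$ is genuinely a manifold with corners and $\psi$ is a submersion compatibly with all the boundary faces. The interior estimate is automatic from Lemma \ref{lem:rabier-spherical} applied to $(W_\by)$ — it gives $\dlt_{n+s}(\bx/|\bx|,T_\bx W_\by)\to 0$, so $|\bx|^2$ has no critical point on $W_\by$ near infinity for $\by$ near $\bc$ — but propagating this across the corner and into the open region $\cW_\by\setminus W_\by$ requires the gradient-field bookkeeping already present in the proof of Proposition \ref{prop:trivial-sub-level}. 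Once that uniform transversality is in hand, the remainder is a routine application of the isotopy/fibration machinery exactly as in Proposition \ref{prop:link-infty-const}.
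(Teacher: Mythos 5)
Your overall strategy matches the paper's: set $\psi(\bx,\by)=(|\bx|^2,\by)$, show it is a proper submersion on $\cW$ near infinity over a small ball around $\bc$, apply Ehresmann's fibration theorem for manifolds with boundary, and repeat with $\bp\mapsto(|\bp|^2,\by)$ for the $\omg$-fibre statement. But you misidentify where the content lies and over-engineer the step you flag as the ``main obstacle.''

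The interior submersivity is not the hard part, and it needs none of the flow/Gr\"onwall/trivialising-vector-field machinery from Proposition \ref{prop:trivial-sub-level} that you propose to import. The paper's observation is elementary: $\cW\setminus W$ is an \emph{open} subset of $\Rn\times\Rs$, so $\psi$ restricted to it is a submersion precisely where the ambient $\psi$ is, and $\crit(\psi)=\bbo\times\Rs$. Thus, after deleting a compact set of the form $\bB_{R_1}^n\times\bB$ (with $\bB_1^n\subset\bB_{R_1}^n$), the only thing that could fail to be submersive is the restriction to the \emph{boundary} $W$, and that is exactly what Proposition \ref{prop:link-infty-const} (equivalently Lemma \ref{lem:rabier-spherical}) already provides, once $\bB\subset U$ and $K_0\cup\bB_1^n\subset\bB_{R_1}^n$. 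There is no ``propagating across the corner'' to do. Relatedly, your manifold-with-corners picture for $\bF$ is a side effect of taking a closed slab $\{|\bx|\geq R_1\}$; the paper works with the open domain $\omg^{-1}(\bB^\circ)\cap\{|\bx|>R_1\}$ and fibres $\psi$ over the open target $(R_1^2,\infty)\times\bB^\circ$, so $\cW$'s only boundary face is $W$ and the plain manifold-with-boundary version of Ehresmann's theorem applies directly. Your argument would eventually close the gap, but it conflates a genuinely trivial step (interior submersivity) with a deep one and invokes the heavy part of Proposition \ref{prop:trivial-sub-level} where a one-line remark about $\crit(\psi)$ suffices.
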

\begin{proof}
We can assume that $\cV = \Rs\setminus K(\vp)$ and the germ at infinity
of $\omg^{-1}(\cV)$ are both connected.

\smallskip 
Consider the mapping $\psi: \Rn \times \Rs \to \R_{\geq 0}\times\Rs$ defined
as $(\bx,\by) \mapsto (|\bx|^2,\by)$.
For $\bc \in \Rs \setminus K (\vp)$, Proposition \ref{prop:link-infty-const} 
implies that there exist a small neighbourhood $U$ of $\bc$ in $\Rs$ and 
a compact subset $K_0$ of $\Rn$ such that 
$$
(\vp^{-1}(U) \setminus K_0\times U ) \cap \crit(\psi|_W) = \emptyset.
$$
Observe also the obvious fact that $\crit(\psi) = \bbo\times\Rs$.
Let $\bB$ be a closed ball of $\Rs$ centred at $\bc$ of radius $r>0$. 
The subset $K_1:= \bB_1^n\times\bB$ is compact in $\Rn \times \Rs$.

\smallskip\noindent
\begin{claim}\label{claim:K1}
$(\vp^{-1}(\bB) \setminus K_1 ) \cap \crit(\psi) = \emptyset$.
\end{claim}
\begin{proof}
If $(\bx,\by) \in \vp^{-1}(\bB)$ then $\by\in\bB$. If $(\bx,\by)$ lies  
in $\crit(\psi)$, then $\bx =\bbo$. 
\end{proof}
\noindent
Up to reducing $r$, we can assume that the closed ball $\bB$ is contained
in $U$. Let $R_1>0$ be such that $\bB_{R_1}^n$ contains $K_0\cup \bB_1^n$.
Define 
$$
K:= \bB_{R_1}^n\times\bB
$$
so that $(K_0\times\bB)\cup K_1$ is a subset of $K$.
Claim \ref{claim:K1} implies that 
the mapping $\psi|_\cW$ is a proper submersion over 
$\varphi^{-1}(\bB) \setminus K$. Ehresmann Fibration Theorem for a manifold 
with boundary applies. Therefore we have obtained the announced result 
of constancy of the $C^1$ type. Since 
$$
\psi^{-1}(\R\times\by) \cap \cW = \cW_\by \times\by
$$
we conclude that for all $\by \in \bB^\circ$, the links 
$\lk^\infty (\cW_\by)$ and $\lk^\infty (\cW_\bc)$ are $C^1$-diffeomorphic.

\smallskip
To get the result for the levels of $\omg$ instead of the family 
$(\cW_\by)_\by$, there is just to follow the same scheme of demonstration as 
the previous one, only modifying $\psi$ to become $\bp \mapsto (|\bp|^2,\by)$.
\end{proof}
%
%
%
%
%
%
%
%
%
%
%
%
%
%
%
%
%
%
%
%
%
%
%
%
%
%
%
%
%
%
%
%
%
%
%
%
%
%
%
\section{Hyperplane sections and Malgrange-Rabier Condition}\label{section:plane-section-malgrange}
We start with the following simple lemma: 
\begin{lemma}[see also \cite{DutertreAdvGeo}]\label{lem:linear-section}
Let $S$ be a connected definable $C^2$ sub-manifold of $\R^q$ of 
positive codimension. 
For each $k\geq \dim S$, there exists an open dense and definable 
subset $\Omega_S^k$  of $\bG(k,q)$ such that $S \cap P$ is transverse (but 
possibly at the origin) for any $k$-plane $P$ of $\Omega_S^k$.
\end{lemma}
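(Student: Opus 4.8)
The statement is a standard genericity-of-transversality fact, and the natural route is a parametrized-transversality argument phrased in the o-minimal (definable) category so that ``open dense'' can be upgraded to ``open dense and definable''. The plan is as follows. First I would fix $k\geq\dim S$ and consider the incidence variety
\[
\mathcal{I}_S^k := \{(\bx,P)\in (S\setminus\bbo)\times\bG(k,q)\;:\; \bx\in P\},
\]
which is a definable $C^2$ sub-manifold of $(\R^q\setminus\bbo)\times\bG(k,q)$ (the fibre over $\bx\neq\bbo$ being the definable sub-manifold of $k$-planes through the line $\R\bx$). The projection $\pi:\mathcal{I}_S^k\to\bG(k,q)$, $(\bx,P)\mapsto P$, is a definable $C^2$ map. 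A short linear-algebra computation shows that $S$ is transverse to $P$ at a point $\bx\in S\cap P$, $\bx\neq\bbo$, precisely when $(\bx,P)$ is a regular point of $\pi$: indeed $T_{(\bx,P)}\mathcal{I}_S^k$ surjects onto $T_P\bG(k,q)$ iff one can adjust $P$ (through $\bx$) in every normal direction of $S$, which is exactly the transversality condition $T_\bx S+P=\R^q$. Hence the set of $P$ for which $S\cap P$ fails to be transverse away from the origin is contained in the set of critical values of $\pi$.

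Next I would invoke the definable Morse–Sard theorem: since $\pi$ is a definable $C^2$ map between definable $C^2$ manifolds, its set of critical values $\Delta:=\pi(\crit(\pi))$ is a definable subset of $\bG(k,q)$ of dimension strictly less than $\dim\bG(k,q)$, in particular definable of positive codimension, hence with empty interior. (This is the same Morse–Sard input already used for Theorem \ref{thm:morse-sard}; alternatively one can argue by the dimension inequality $\dim\mathcal{I}_S^k=\dim S+\dim\bG(k-1,q-1)<\dim S+(q-\dim S)\cdot(\text{stuff})$, but the cleanest is Morse–Sard applied to $\pi$.) Therefore $\Omega_S^k:=\bG(k,q)\setminus\clos(\Delta)$ — or rather the interior of the complement of the bad set — is open, dense, and definable, and for $P\in\Omega_S^k$ every intersection point of $S\cap P$ other than possibly the origin is a point of transversality. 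One has to be slightly careful that the ``bad set'' $\Delta$ need not itself be closed, so I would replace it by its closure (still definable, still of empty interior by o-minimal dimension theory) before taking the complement; this guarantees $\Omega_S^k$ is genuinely open.

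The one technical point deserving attention — the main (mild) obstacle — is the role of the origin. The manifold $S$ need not contain $\bbo$, and even if it does, every $k$-plane passes through $\bbo$, so transversality there cannot be expected; this is exactly why the statement says ``but possibly at the origin''. I would handle this by working throughout with $S\setminus\bbo$ (or equivalently $\mathcal{I}_S^k$ over $\R^q\setminus\bbo$ as above), so that the incidence manifold is genuinely a $C^2$ definable manifold and the submersion criterion for transversality is valid; the origin is simply excised from the analysis and reinserted in the statement as the allowed exceptional point. A secondary point is checking that $\mathcal{I}_S^k$ is nonempty and that $\pi$ is defined on all of it — nonemptiness holds because $k\geq\dim S\geq 1$ so through any $\bx\neq\bbo$ there is a $k$-plane containing $\bx$, and if $S\subset\{\bbo\}$ the statement is vacuous — and that the dimension count indeed forces positive codimension of $\Delta$, which follows from the generic fibre of $\pi$ having dimension $\dim S + \dim\bG(k-1,q-1) - \dim\bG(k,q)\geq 0$ being accounted for correctly; all of this is routine once the incidence variety is set up.
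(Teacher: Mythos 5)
Your proposal is correct and matches the paper's own proof essentially step for step: both set up the incidence manifold $B_k=\{(\bx,P)\in S^*\times\bG(k,q):\bx\in P\}$ over $S^*=S\setminus\bbo$, note it is a definable $C^2$ fibre bundle over $S^*$ with fibre $\bG(k-1,q-1)$, project to $\bG(k,q)$, apply the definable Sard theorem to conclude the critical-value set $\Delta$ has positive codimension, and take $\Omega_S^k=\bG(k,q)\setminus\clos(\Delta)$. The only difference is presentational — you spell out the equivalence between transversality of $S\cap P$ at $\bx$ and regularity of $\pi$ at $(\bx,P)$, which the paper leaves implicit — so no further comparison is needed.
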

\begin{proof}
 Let $\bbo$ be the origin of $\R^q$ and let $S^* := S\setminus \bbo$. 
Let $k \geq \dim S$. Let us consider the following subset
$$
B_k := \{(\bx,P) \, \in \, S^* \times \bG(k,q)\, :\,  \bx\in P\}.
$$
It is a closed definable $C^2$ sub-manifold of $S^*\times\bG(k,q)$ 
 of dimension $\dim S + (k-1)(q-k)$ since it  is
a fibre bundle over $S^*$ with model fibre $\bG(k-1,q-1)$. 
Let $\pi_k$ be the projection $B_k\to\bG (k,q)$. It is obviously a $C^2$
definable mapping. The subset $\Delta = \pi_k(\crit(\pi_k))$ of its 
critical values is definable in $\bG(k,q)$ and of positive codimension. 
The complement $\bG(k,q)\setminus \clos(\Delta)$ is the desired 
looked for $\Omg_S^k$. 
\end{proof}
Of course the proof works for any $k$, but for $q-k \geq \dim S$, we have 
$\pi(B_k)= \Delta$, so that the fibre of $\pi_k$ over $P\notin \Delta$ is empty.

\bigskip
Since any linear subspace $P$ of dimension $k-s$ of $\R^n$ gives rise to a 
unique linear subspace $P\times\Rs$ of $\Rn\times\Rs$ of 
dimension $k$, we check that the following subset
$$
\Pi (k) := \{P\times\Rs \in \bG(k,n+s)\, : \, P\in \bG(k-s,n)\}.
$$
is a non-singular projective sub-variety of $\bG(k,n+s)$ isomorphic 
to $\bG(k-s,n)$.
%
%
%
%

\medskip\noindent
The first main new result of the paper is the
following:
\begin{theorem}\label{thm:M-R-plane-sections}
Let $\bc\notin K(\vp)$. For every $k\geq n+s-\dim W$, there exists a 
definable open dense subset $\Omg_\bc^{k}$ of $\Pi(k)$ consisting of $k$-vector 
planes $P\times\Rs$ such that the value $\bc$ does not lie in 
$K(\vp|_{P\times\Rs})$.
\end{theorem}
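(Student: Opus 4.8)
The plan is to reduce the statement to a genericity count on a suitable incidence variety, in the spirit of Lemma \ref{lem:linear-section}. First I would unwind what $\bc \notin K(\vp|_{P\times\Rs})$ means. By Definition \ref{def:GCV}, $\bc$ lies in $K(\vp|_{P\times\Rs})$ iff either $\bc$ is a critical value of $\vp$ restricted to $W\cap(P\times\Rs)$, or $\bc$ is an ACV of that restricted map. The first (critical value) alternative is handled by the classical transversality argument: since $\bc$ is a regular value of $\vp$, the fibre $\vp^{-1}(\bc)=W_\bc\times\bc$ is a $C^2$ submanifold of $\Rn\times\Rs$, and applying Lemma \ref{lem:linear-section} to $W_\bc \subset \Rn$ (intersected with $k-s$ planes $P$ of $\Rn$) gives an open dense definable subset of $\Pi(k)$ over which $\vp|_{P\times\Rs}$ is submersive along its fibre over $\bc$; one also needs a neighbourhood version, so that nearby the fibre the map stays submersive, which follows from openness of the regular locus. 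So the genuinely new content is the ACV part: ruling out $\bc \in K_\infty(\vp|_{P\times\Rs})$ for generic $P$.

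For the ACV part I would exploit the characterization via $\bt$-regularity, Proposition \ref{prop:T-equiv-MR}: $\bc$ is (MR)-regular for $\vp|_{P\times\Rs}$ iff $W\cap(P\times\Rs)$ is $\bt$-regular at $\bc$. Pass to the blow-up at infinity, working with $Z=\beta_s^{-1}(W)$ and its boundary $Z^\infty$. Since $\bc\notin K(\vp)$, by Lemma \ref{lem:reg-inf-rabier} there is $\nu_\bc>0$ bounding below $\nu_s(\pi_s(\bz)|_T)$ for all $\bz\in Z^\infty\cap\bM^\infty\times\bc$ and all $T\in\cT^\infty_{\br_Z,\bz}$; in particular this is a compact-ness-driven uniform statement. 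Now form the incidence variety of pairs $(\bz,P)$ with $\bz\in Z^\infty\cap\bM^\infty\times\bc$ and $\bz\in\beta_s^{-1}(P\times\Rs)$ (equivalently the spherical direction $\bu$ of $\bz$ lies in $P$), together with the limit tangent data $T\in\cT^\infty_{\br_Z,\bz}$. This is a definable set; project to $\Pi(k)\cong\bG(k-s,n)$. The key point is that for $P$ outside a definable subset of positive codimension, the slice $Z\cap\beta_s^{-1}(P\times\Rs)$ near its boundary over $\bc$ inherits $\bt$-regularity: the relative tangent space to $\br$ of the slice is (the limit of) $T\cap T_\bz(\beta_s^{-1}(P\times\Rs))$, and transversality of $P$ with the relevant bundle of tangent planes — obtained by a dimension count on the incidence variety exactly as in Lemma \ref{lem:linear-section} — forces this intersection still to project onto $\Rs$, with a uniform Rabier bound by compactness of $Z^\infty\cap\bM^\infty\times\bc$.

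The main obstacle I expect is precisely this last transversality bookkeeping at infinity: one must check that intersecting with $P\times\Rs$ does not destroy surjectivity onto the $\Rs$-factor of the limiting relative tangent planes, and that this holds \emph{uniformly} in a neighbourhood of $\bc$ (not just at $\bc$ itself), since the definition of $K_\infty$ is a condition about sequences with $\by_k\to\bc$, not $\by_k=\bc$. I would handle the neighbourhood issue by enlarging the incidence variety to range over $\bz\in Z^\infty\cap\bM^\infty\times\bB$ for a small closed ball $\bB\ni\bc$ with $\bB\cap K(\vp)=\emptyset$ (such $\bB$ exists as $K(\vp)$ is closed, Lemma \ref{lem:Kf-def-closed}), run the dimension count there, and then invoke Lemma \ref{lem:reg-inf-rabier} applied to the slice to convert the generic transversality into the uniform lower bound $\nu_\bc'>0$ defining (MR)-regularity of $\vp|_{P\times\Rs}$ at $\bc$. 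Intersecting the resulting open dense definable subset of $\Pi(k)$ with the one coming from the critical-value (finite) part yields the desired $\Omg_\bc^k$.
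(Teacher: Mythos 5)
Your high-level plan lines up with the paper's strategy — pass to the blow-up at infinity, use the equivalence with $\bt$-regularity (Proposition \ref{prop:T-equiv-MR}), and invoke genericity of plane sections as in Lemma \ref{lem:linear-section}. You are also right that one must handle the critical-value part and the ACV part separately; the paper focuses on the latter, and your treatment of the former via transversality with the fibre $W_\bc$ is fine. But at the heart of the ACV argument there is a real gap.

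You assert that ``transversality of $P$ with the relevant bundle of tangent planes \dots\ forces this intersection still to project onto $\Rs$'', backed by a dimension count on an incidence variety. This is exactly where the proof cannot stay vague, because transversality of $P$ with the limiting planes themselves is \emph{not} what is needed. Write $R$ for a limit of relative tangent spaces $T_{\bz_k}\br_Z$ at a point $\bz_\infty=(0,\bu,\bc)\in Z^\infty$, with $K_R:=\ker\bigl(R\to\Rs\bigr)$. Since $K_R\subset R$, one has the identity $\bigl(R\cap(P\times\Rs)\bigr)+K_R = R\cap\bigl((P\times\Rs)+K_R\bigr)$, so surjectivity of $R\cap(P\times\Rs)\to\Rs$ is equivalent to $(P\times\Rs)+K_R\supseteq R$. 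This is a condition on the position of the \emph{kernel} $K_R$ relative to $P$, not on $R$ itself; knowing that $P$ meets $R$ transversally gives no information about it. Moreover the kernels $K_R$ range over a whole definable family (one for each direction $\bu$ and each accumulating sequence), so a one-at-a-time application of Lemma \ref{lem:linear-section} does not apply, and a naive dimension count on your incidence variety would have to be carried out with care in several dimension ranges before one could trust it.

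The ingredient that closes this gap in the paper — and that your plan omits — is a Thom $(\arel)$-regular definable stratification of $\br_Z$ over $\clos(Z)$ in which $Z_\bc^\infty$ is a union of strata (this is Claim \ref{claim:aF-at-c}). Thom's condition gives $T_\bu S\subset R$ for every limit $R$ as above, where $S$ is the stratum through $(0,\bu,\bc)$; since $T_\bu S$ lies in the $\{\by=\bc\}$ slice of $0\times T_\bu\bS^{n-1}\times\Rs$, this yields $T_\bu S\subset K_R$. The whole family of kernels is thereby controlled by \emph{finitely many} stratum tangent spaces. Now Lemma \ref{lem:linear-section} applied to each stratum gives a generic $P$ transverse to them all, and the containment $T_\bu S\subset K_R$ together with $T_\bu\bS(P)+T_\bu S=\R^{n-1}$ immediately forces $(P\times\Rs)+K_R\supseteq\R\times\R^{n-1}\times\Rs\supseteq R$, as required. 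Note also that the $(\arel)$-condition already controls limits along arbitrary sequences $\bz_k\to\bz_\infty$ in $Z$, with $\by_k\to\bc$ but $\by_k\neq\bc$ permitted — so your suggestion to "enlarge the incidence variety to range over a small ball $\bB$" is an unnecessary detour once the Thom stratification is in place. Finally, the boundary case $\dim W-s=n-k$ (where the generic slice has finite fibre over $\bc$) must be treated separately, as the paper does at the end of its proof.
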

\begin{proof}
Observe that for any vector $p$-plane $P$ of $\Rn$ with $p\geq 1$, we have
$$
\beta^{-1}(P) = ]0,+\infty[\times \bS(P) \subset \bMbar.
$$
Let us define the "$p$-plane" induced from $P$ in $\bMbar$
$$
P^+ := [0,+\infty[\times \bS(P) \subset \bMbar.
$$

\medskip\noindent
Let $\bc$ be a (regular) value of $\vp$. 
The set of accumulation points at infinity of the value $\bc$ of the mapping 
$\vp$ is the closed, definable subset of $\bS^{n-1}\times \bbo 
\subset \bS^{n+s-1}$ defined as
\begin{equation}\label{eq:xinftyc}
W_\bc^\infty :=\left\{\bu\in\bS^{n+s-1} \, ; \, \exists \, (\bw_k)_{k\in\N} 
 \subset  W \, : \, \bw_k \to \infty, \, \by_k \to \bc, \, 
\frac{\bw_k}{|\bw_k|} \to \bu\right\}.
\end{equation}
We write $W_\bc^\infty = A_\bc^\infty \times \bbo$.
We recall that $Z = \beta_s^{-1}(W)$ and that $Z^\infty = \clos (Z) \cap 
\bMbar_s^\infty$. Observe that 
$$ 
Z_\bc^\infty := Z^\infty\cap 0\times\bS^{n-1}\times \bc = 0\times A_\bc^\infty
\times\bc.
$$
We recall that $\bz = (r,\bu,\by) \in \bMbar_s$, and $\br_Z (\bz) = r$ 
for $\bz\in Z$. 
We need the following intermediary result:
\begin{claim}\label{claim:aF-at-c}
Let $\bc\in \Rs\setminus  K(\vp)$. There exists a definable 
stratification of $Z_\bc^\infty$ such that for any point $\bu$ of 
$A_\bc^\infty$ and for any sequence $(\bz_k)_{k\in\N}$ of $Z$ 
such that (i) $r_k \to 0$, (ii) $\by_k \to \bc$, (iii) $\bu_k \to \bu$, 
and (iv) $T_{\bz_k} \br_Z \to R$, we have
$$
T_\bu S \subset R,
$$
where $S$ is the stratum of $Z_\bc^\infty$ containing $0\times\bu\times\bc$. 
\end{claim}
\begin{proof}[Proof of the claim]
The function $\br_Z$ extends continuously and definably to $0$ on $Z^\infty$.  
Following \cite{Loi2,Loi3}, it can be definably stratified  
($\arel$). We can further ask that $0\times A_\bc^\infty \times \bc$ is a 
union of strata. Any stratum of $0\times A_\bc^\infty \times \bc$ is of 
the form $0\times S \times \bc$, for some sub-manifold $S$ of $\bS^{n-1}$.
\end{proof}
Let us write again $W_\bc \times \bc := \vp^{-1}(\bc)$. It is more convenient
to work in a neighbourhood of $\bMbar_s^\infty$, more precisely
nearby $Z_\bc^\infty$.

Let $S_1,\ldots,S_l$ be the strata of $A_\bc^\infty$ obtained from Claim 
\ref{claim:aF-at-c}. 

Let $k\leq n+s$ be any integer such that $k + \dim W - s \geq n$. 
Let us consider the following definable open dense subset of $\bG(k-s,n)$
$$
\Omg_\bc^k :=\cap_{j=0}^l 
\Omg_{S_j}^{k-s},
$$
where $\Omg_{S_j}^{k-s}$ is the definable open dense subset of 
$\bG(k-s,n)$ of Lemma \ref{lem:linear-section} corresponding to $S_j$.

\smallskip\noindent
Assume that $\dim W -s \geq n-k+1$.

Let $P$ be a $(k-s)$-plane of $\Omega_\bc^k$.
Let $(\bz_m)_m$ be a  sequence of $P^+\times\Rs\setminus Z^\infty$ such that 
$\bz_m \to \bz_\infty = (0,\bu,\bc) \in Z_\bc^\infty$. We can assume that 
$$
T_{\bz_m} \br_Z \to R, \;\; {\rm and} \;\; 
\bu_m \to \bu \in  (0\times A_\bc^\infty \cap P^+)
\times \bbo.
$$
Since $\bu\in P^+$, we deduce that $P = \R\bu \oplus T_\bu \bS(P)$.
\\
All the computations which will follow are done in 
$$
T_{\bz_\infty}\bMbar_s = \R\times T_\bu\bS^{n-1}\times\Rs = 
\R\times\R^{n-1}\times\Rs.
$$
Therefore we write $P$ for $\R\times T_\bu\bS(P) = T_{(0,\bu)} P^+ \subset 
\R\times\R^{n-1}$. 
\\
By hypothesis $R = 0\times R_1\subset 0\times\R^{n-1}\times\Rs$
and the space $R$ projects surjectively onto $\Rs$
(see Definition \ref{def:regular-infinity} and Lemma 
\ref{lem:rabier-surjective}).
We want to show that $R\cap P\times\Rs$ projects surjectively onto $\Rs$. 
Let 
$$
0\times K_R\times \bbo := R\cap (\R\times\R^{n-1})\times\bbo = 
\ker (R\mapsto\Rs).
$$ 
We can assume that $K_R = \R^p\times\bbo\subset \R^{n-1}$ where $p = \dim W - 
s- 1$. By hypothesis, denoting by $S$ the stratum of $A_\bc^\infty$ 
containing $\bu$, we have 
$$
T_\bu S \subset K_R, \;\; {\rm and} \;\; P + T_\bu S = \R\times\R^{n-1}.
$$
Since $T_\bu S$ is contained in $0\times\R^{n-1}$, the $k$-plane 
$P$ is not. Let $0\times P_1 := P\cap 0\times\R^{n-1}$. Since we find 
$$
K_R + P_1 = \R^{n-1}, 
$$
we deduce the following key fact 
$$
\bbo\times\R^{n-1-p} \subset P_1.
$$
Let $N_R := K_R^\perp \cap R$ be the orthogonal complement of $K_R$ in $R$, thus 
the projection to $\Rs$ when restricted to $N_R$ is an isomorphism. 
Since the space $N_R$ is contained in $\bbo\times \R^{n-1-p}\times\Rs$, we deduce that
$$
N_R =  (0\times P_1\times \Rs)\cap N_R \subset 
(P\times\Rs)\cap R.
$$
In other words $R\cap (P\times\Rs)$ projects surjectively onto $\Rs$.

\smallskip\noindent
Assume $\dim W -s = n-k$.
For each  generic $k$-plane $P$ of $\Omg_\bc^k$, the fibre 
$(\vp|_{P\times\Rs})^{-1}(\bc)$ is finite. The definition of $W_\bc^\infty$ 
prohibits, when it is not empty, that $\bc$ be a properness value
of $\vp|_{P\times\Rs}$, therefore $\bc$ lies in $K(\vp|_{P\times\Rs})$ if 
$W_\bc^\infty$ is not empty. In such a case, the arguments used in the 
previous case show the existence of a surjective mapping from a space of 
dimension $s-1$ onto $\Rs$.
\end{proof}
Let $F: X\to\Rs$ be a $C^2$ definable mapping over a closed connected 
$C^2$ definable sub-manifold $X$ of $\R^n$ with $\dim X\geq s$. The original
goal of this section, consequence of Theorem \ref{thm:M-R-plane-sections}, is 
the following result:
\begin{corollary}\label{cor:maps-M-R-plane-sections}
Let $\bc$ be a value not in $K(F)$.
For every $k\geq n - (\dim X-s)$, there exists a definable and dense open 
subset $\cV_\bc^k$ of $\bG(k,n)$ such that for every plane $P$ of 
$\cV_\bc^k$ the value $\bc$ does not lie in $K(F|_P)$.
\end{corollary}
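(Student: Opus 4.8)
The plan is to deduce Corollary~\ref{cor:maps-M-R-plane-sections} from Theorem~\ref{thm:M-R-plane-sections} by passing to the graph of $F$, following the recipe laid out at the end of Section~\ref{section:rabier}. First I would set $W := \{(\bx,F(\bx)) : \bx\in X\}\subset\Rn\times\Rs$, the graph of $F$: since $X$ is a closed connected $C^2$ definable sub-manifold of $\R^n$ and $F$ is $C^2$ and definable, $W$ is a closed connected $C^2$ definable sub-manifold of $\Rn\times\Rs$ with $\dim W=\dim X\geq s$, so the standing hypotheses of the paper hold for the projection $\vp:W\to\Rs$, $(\bx,\by)\mapsto\by$, which is the mapping corresponding to $F$ through its graph. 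The two identifications to record are: (i) $K(F)=K(\vp)$ --- indeed $D_{(\bx,F(\bx))}\vp$ is surjective exactly when $D_\bx F$ is, so $K_0(F)=K_0(\vp)$, while $K_\infty(F)=K_\infty(\vp)$ by the discussion at the end of Section~\ref{section:rabier} (via Equation~\eqref{eq:nu-projection} of Lemma~\ref{lem:nu-projection}); (ii) for any linear subspace $P\subset\R^n$ one has $W\cap(P\times\Rs)=\{(\bx,F(\bx)):\bx\in X\cap P\}$, i.e.\ $W\cap(P\times\Rs)$ is the graph of $F|_{X\cap P}$ and $\vp|_{P\times\Rs}$ is the mapping corresponding to it, so applying (i) to this restricted situation yields $K(F|_P)=K(\vp|_{P\times\Rs})$ for every $P$.

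Next I would apply Theorem~\ref{thm:M-R-plane-sections} with section dimension $k+s$: the hypothesis $k\geq n-(\dim X-s)$ gives $k+s\geq n+s-\dim X=n+s-\dim W$, so the theorem produces a definable open dense subset $\Omg_\bc^{k+s}$ of $\Pi(k+s)$ consisting of $(k+s)$-planes $P\times\Rs$ with $P\in\bG(k,n)$ and such that $\bc\notin K(\vp|_{P\times\Rs})$. Transporting $\Omg_\bc^{k+s}$ along the isomorphism $\Pi(k+s)\cong\bG(k,n)$ recorded just before Theorem~\ref{thm:M-R-plane-sections} produces the desired definable open dense subset $\cV_\bc^k\subset\bG(k,n)$, and for $P\in\cV_\bc^k$ identification (ii) gives $\bc\notin K(F|_P)$. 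The point of the extra summand $s$ in the hypothesis (as opposed to the weaker $k\geq n-\dim X$ that Theorem~\ref{thm:M-R-plane-sections} alone would require) is that for generic $P$ the intersection $X\cap P$ then has dimension $\geq s$, so that $F|_P$ genuinely falls within the paper's framework; if one wishes, one may shrink $\cV_\bc^k$ by the generic transversality condition for linear sections (cf.\ Lemma~\ref{lem:linear-section}), noting that $W$ is transverse to $P\times\Rs$ precisely when $X$ is transverse to $P$, so that $X\cap P$ is then a $C^2$ sub-manifold away from the origin.

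I do not anticipate a genuine obstacle here: the corollary is essentially a dictionary translation of Theorem~\ref{thm:M-R-plane-sections} from families to mappings via the graph construction. The only points requiring (routine) care are the bookkeeping of the plane dimensions, the compatibility of the various genericity sets, and the single, harmless point at the origin --- none of which affects $K_0$ or $K_\infty$.
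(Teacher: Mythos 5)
Your proposal is correct and follows essentially the same route as the paper's own proof: pass to the graph $W$ of $F$, use the identifications $K(F)=K(\vp)$ and $K(F|_P)=K(\vp|_{P\times\Rs})$ (via Lemma~\ref{lem:nu-projection}), apply Theorem~\ref{thm:M-R-plane-sections} with plane dimension $k+s$, and transport the resulting generic set along the isomorphism $\Pi(k+s)\cong\bG(k,n)$. Your explanation of why the hypothesis is $k\geq n-(\dim X-s)$ rather than the weaker $k\geq n-\dim X$ required by the theorem alone (so that generic sections $X\cap P$ still have dimension $\geq s$) is a helpful clarification of a point the paper leaves implicit.
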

\begin{proof}
Let $W$ be the graph of $F$ and let $\vp : W \to \Rs$ be the restriction
to $W$ of the projection $\Rn\times\Rs \to \Rs$.
The regular mapping 
$$
\Pi(k+s) \, {\buildrel \iota_n\over\mapsto} \, \bG(k,n), \; P \, \mapsto \, 
P_n := P\cap \Rn\times 0
$$
is an isomorphism.
Let $\bc\in\Rs$ which does not lie in $K(\vp) = K(F)$.
For any $l \geq (n+s) - (\dim W -s)$, 
let $\cU_\bc^{l+s}$ be the open dense subset of 
Theorem \ref{thm:M-R-plane-sections}. The image 
$$
\cV_\bc^k := \iota_n(\cU_\bc^{k+s})
$$
is definable, open and dense in $\bG(k,n)$.
\end{proof}
%
%
%
%
%
%
%
%
%
%
%
%
%
%
%
%
%
%
%
%
%
%
%
%
%
%
%
\section{Lipschitz-Killing Measures}\label{section:LKM}

We present very briefly in this section the Lipschitz-Killing measures of a 
definable set in an o-minimal structure following Br\"ocker and Kuppe's
approach \cite{BroeckerKuppe}. They are the essential ingredients to define
the functions $\Lbd_k^\infty(-)$ (Definition \ref{def:chi-lbd-infty}),
bricks of the general Gauss-Bonnet Formula presented in Theorem
\ref{thm:GaussBonnet}.

\bigskip
We start with a few reminders about the extrinsic geometry of 
sub-manifolds of the Euclidean space $\Rn$.

\smallskip
Let $Z$ be a $C^2$ connected orientable sub-manifold of $\Rn$ equipped
with the restriction of the Euclidean metric tensor.
Let $\bx$ be a point of $Z$. 
Let $\bS(N_\bx Z)$ be the unit sphere of the normal space $N_\bx Z = 
T_\bx Z^\perp$ in $T_\bx \Rn = \Rn$.
For $\bv$ a given vector of $\bS(N_\bx Z)$, let $\bv^*$ 
be the linear form over $T_\bx \Rn$ defined by the scalar product with $\bv$.

Let $\nb$ be the covariant differentiation in $\Rn$ w.r.t. the 
Euclidean metric tensor.

Of fundamental importance to define the Lipschitz-Killing curvatures 
of $Z$ at any of its points is the family of second fundamental forms 
$(II_{\bn})_{\bn\in\bS(NZ)}$, where $\bS(NZ)$ is the unit sphere bundle
of the normal bundle $NZ$ of $Z$ in $T\Rn|_Z$.
Given $\bn = (\bx,\bv)$ in $\bS(NZ)$, we recall that $II_\bn$, \em the second 
fundamental form in the direction $\bv$, \em is the symmetric bilinear 
form over $T_\bx Z$ defined as
$$
II_\bn(\bu_1,\bu_2) =-\langle \nabla_{\bu_1} \nu,\bu_2 \rangle,
$$
where $\bu_1,\bu_2$ are vectors of $T_\bx Z$ and $\nu$ is any $C^1$ local 
extension of $\bv$ normal to $Z$ at $\bx$.

For each $l=0,\ldots, \dim Z$, let  
$\sigma_l^Z (\bn)$ be the $l$-th elementary symmetric function of the 
eigenvalues of $II_\bn$ when considered as a symmetric endomorphism of 
$T_\bx Z$.

\bigskip
Let $X$ be a closed definable subset of $\mathbb{R}^n$ equipped with  
$\mathcal{S}=\{ S_a \}_{a \in A}$, a finite $C^2$ definable Whitney 
stratification.  
Let $S$ be a stratum $S_a$ of dimension $d_S$. Let $\bx$ be a point of 
$S$ and let $\bv$ be a unit vector normal to $S$ at $\bx$. We recall 
the definition of the following index
$$
{\rm ind}_{\rm nor}(\bv^*,X,\bx)= 1-\chi \left( X \cap N_\bx  \cap 
\bB^n (\bx,\ve) \cap \{ \bv^*= \bv^*(\bx)-\delta \} \right),
$$
where $0 < \delta \ll \ve \ll 1$ and $N_\bx$ is a normal (definable) 
slice to $S$ at $\bx$ in $\Rn$.

\medskip
For each $k = 0,\ldots,n,$ we define the function 
$\lambda_k^S : S \rightarrow \mathbb{R}$ as
$$
\lambda_k^S(\bx) : = 
\left\{
\begin{array}{rcl} 
\displaystyle{
\frac{1}{s_{n-k-1}} \int_{\bS(N_\bx S)}} {\rm ind}_{\rm nor}(v^*,X,\bx) 
\sigma_{d_S-k}^S (\bx,\bv) \rd\bv & {\rm if} & 0 \leq k \leq d_S \\
0 & {\rm if} & d_S + 1 \leq k \leq n,
\end{array}
\right.
$$
where $s_l$ is the $l$-volume of the unit Euclidean sphere 
$\bS^l$ of $\R^{l+1}$.
\\
If $S$ has dimension $n$ then for all $\bx \in S$, we find 
$$
\lambda_0^S =\cdots=\lambda_{n-1}^S \equiv 0 \;\; {\rm and} \;\; \lambda_n^S 
\equiv 1.
$$ 
If $S$ has dimension $0$ then 
$$
{\rm ind}_{\rm nor}(\bv^*,X,\bx)= {\rm ind}(\bv^*,X,\bx):=
1-\chi \left( X \cap \bB^n(\bx,\ve) \cap \{ \bv^*= \bv^*(\bx)-
\delta \} \right),
$$
and we set
$$
\lambda_0^S(\bx)= \frac{1}{s_{n-1}} \int_{\bS^{n-1}}  {\rm ind}(\bv^*,X,\bx) 
\rd\bv, \;\; {\rm and} \;\; 
\lambda_k^S \equiv 0, \;\; {\rm once} \;\; k\geq 1.
$$

\begin{definition}
Let $k\in \{0,\ldots,n\}$. The \em $k$-th Lipschitz-Killing measure 
$\Lbd_k(X,-)$ of $X$ \em is defined as follows:
$$
U \mapsto 
\Lbd_k(X,U)= \sum_{a \in A} \int_{S_a \cap U} \lambda_k^{S_a} 
(\bx) \rd\bx,
$$
where $U$ is any bounded Borel subset of $X$.
\end{definition}
\noindent
Denoting by $d$ the dimension of $X$, we obviously have
$$
\Lbd_{d+1}(X,-)= \cdots=\Lbd_n(X,-) \equiv 0,
$$ 
and for any bounded Borel subset $U$ of $X$ we get
$$
\Lbd_{d}(X,U)= \mathcal{H}_{d}(U),
$$ 
where $\mathcal{H}_{d}$ is the $d$-th dimensional Hausdorff measure in 
$\mathbb{R}^n$. 

\bigskip
An exhaustive family of compact subsets $(K_R)_{R >0}$ of $X$ 
is an increasing - for the inclusion - family of compacts of $X$ covering $X$: 
$$
\bigcup_{R>0} K_R = X.
$$

\medskip
We introduce some new notations in order to present short formulae. Let 
$g_n^l$ be the volume of the Grassmann manifold $\bG(l,n)$ of 
vector $l$-planes of $\Rn$ when equipped with the Euclidean metric
(see Section \ref{section:linear-algebra}). We 
start with the following two sets of numbers:
\begin{definition}\label{def:chi-lbd-infty}
Let $X$ be a closed definable subset of $\Rn$.
For each $l=0,\ldots,n,$ let $\chi_l^\infty(X)$ be defined as
\begin{equation}\label{eq:chi-l-infty}
\chi_l^\infty(X) := 
\displaystyle{
\frac{1}{2g_n^l}\int_{\bG(l,n)} \chi\left(\lk^\infty(X\cap P) \right)\rd P,
}
\end{equation}
and let $\Lbd_l^\infty(X)$ be \em the $l$-th Lipschitz-Killing invariant
of $X$ at infinity \em defined as
\begin{equation}\label{eq:lbd-l-infty}
\Lbd_l^\infty(X) := \lim_{R \rightarrow + \infty} 
\frac{\Lbd_l(X,X\cap \bB_R^n)}{b_l R^l},
\end{equation}
where $b_l$ is the volume of the Euclidean unit ball $\bB_1^l$.
\end{definition}
\noindent
Observe that for $X\subset\Rn$ we have the obvious equality
\begin{equation}\label{eq:chi-infty-max}
\chi(\lk^\infty(X)) = 2\chi_n^\infty(X).
\end{equation}

\medskip
When $X$ is a closed definable connected sub-manifold, note that 
$\chi_k^\infty(X) = 0$ whenever $\lk^\infty(X\cap P)$ has odd
dimension, that is when $\dim X - (n-k)$ is even. 
When $n-k\geq \dim X$ we also find that 
$$
\chi_k^\infty (X) = 0,
$$
since the link $\lk^\infty(X\cap P)$ is empty for $P$ lying in a definable 
open dense subset of $\bG(k,n)$.

\medskip
We recall now several Gauss-Bonnet type formulas for a closed definable 
set $X$ subset of  $\Rn$ established by the first author in
\cite{DutertreAdvGeo,DutertreGeoDedicata2012} and which relate the numbers 
$\chi_l^\infty(-)$ and $\Lbd_l^\infty(-)$.
\begin{theorem}[\cite{DutertreAdvGeo,DutertreGeoDedicata2012}]\label{thm:GaussBonnet}
1) The limit $$
\Lambda_0^\infty(X) := \lim_{R \to +\infty} \Lambda_0(X,X\cap K_R)
$$ 
exists and does not depend on the choice of the exhaustive family of compact 
subsets $(K_R)_{R >0}$. More precisely the following equality holds: 
\begin{equation}\label{eq:GB-0}
\Lambda_0^\infty(X)= \chi(X) -  \chi_n^\infty(X) - \chi_{n-1}^\infty(X).
\end{equation}

\medskip\noindent
2) For each $k= 1,\ldots,n-2,$ we furthermore have:
\begin{equation}\label{eq:GB-n-2}
\Lbd_k^\infty(X)
= - \chi_{n-k-1}^\infty(X) + \chi_{n-k+1}^\infty(X),
\end{equation}
and for $l=n-1,n,$ we have
\begin{equation}\label{eq:GB-n-1-n}
\Lbd_l^\infty(X) = \chi_{n-l+1}^\infty(X).
\end{equation}
\end{theorem}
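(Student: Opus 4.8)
The plan is to reduce the four identities to three classical ingredients, all valid for closed definable sets: (a) the Gauss--Bonnet formula for \emph{compact} definable sets, $\Lbd_0(Y)=\chi(Y)$, where the left-hand side uses the stratification of $Y$; (b) the Cauchy--Crofton formulas for Lipschitz--Killing measures, expressing $\Lbd_k(Y,\,\cdot\,)$ as a constant times the average of $\Lbd_0$ over the generic affine sections of $Y$ by planes of codimension $k$; and (c) the local conical structure of $X$ at infinity. By (c), since $\bx\mapsto|\bx|$ is definable, its restriction to every stratum of $X$ has finitely many critical values, so there is $R_0$ with: for all $R\ge R_0$, $X\cap\bS_R^{n-1}=\lk^\infty(X)$, the set $Y_R:=X\cap\bB_R^n$ is compact definable with all its ``new'' strata lying on $\bS_R^{n-1}$, and $X$ deformation retracts onto $Y_R$, so $\chi(Y_R)=\chi(X)$.

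I would first prove \eqref{eq:GB-0}. Apply (a) to $Y_R$, $R\ge R_0$, and split $\Lbd_0(Y_R)$ according to whether a stratum meets the open ball or sits on the sphere:
$$
\chi(X)=\chi(Y_R)=\Lbd_0\bigl(X,X\cap\bB_R^n\bigr)+B(R),
$$
where $\Lbd_0(X,X\cap\bB_R^n)$ is the contribution of the open-ball strata (the densities $\lambda_0$ there are insensitive to replacing $Y_R$ by $X$, the two sets agreeing near such points) and $B(R)$ is the contribution of the strata contained in $\bS_R^{n-1}$. The first summand converges to $\Lbd_0^\infty(X)$ by definition of the exhaustive family, and a sandwiching between two balls gives independence of the limit from the exhaustive family; it remains to identify $\lim_{R\to+\infty}B(R)$. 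On a stratum $S\subset\bS_R^{n-1}$ one splits the unit normal sphere $\bS(N_\bx S)$ according to the radial direction $\dd_\br$: the second fundamental form $II_{\dd_\br}$ is umbilic with eigenvalues $\pm1/R$, which exactly cancels the $R^{\dim S}$ growth of the volume of $S$, while the directions normal to $S$ inside $\bS_R^{n-1}$ see the extrinsic geometry of $\lk^\infty(X)$ within the sphere. Carrying out the integration defining $\lambda_0$, the radial part contributes, through the Gauss map of $\lk^\infty(X)$, the term $\chi_n^\infty(X)=\tfrac12\chi(\lk^\infty(X))$ (cf. \eqref{eq:chi-infty-max}), and the ``tangent to the sphere'' part contributes, after averaging over the great hyperspheres of $\bS_R^{n-1}$ (which as $R\to+\infty$ becomes the average over $\bG(n-1,n)$ in \eqref{eq:chi-l-infty}), the term $\chi_{n-1}^\infty(X)$. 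This yields $\Lbd_0^\infty(X)=\chi(X)-\chi_n^\infty(X)-\chi_{n-1}^\infty(X)$.

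For \eqref{eq:GB-n-2} and \eqref{eq:GB-n-1-n}, $k\ge1$, I would bootstrap from \eqref{eq:GB-0} via (b). Combining the Crofton formula for $\Lbd_k$ with the same truncate--rescale--pass-to-the-limit procedure yields a Cauchy--Crofton identity ``at infinity'' expressing $\Lbd_k^\infty(X)$ as an explicit multiple of the average over $P\in\bG(n-k,n)$ of $\Lbd_0^\infty(X\cap P)$. Plugging \eqref{eq:GB-0}, read inside each ambient $P\cong\R^{n-k}$, into this average and then re-expressing the resulting averaged quantities $\chi(X\cap P)$, $\chi_{n-k}^\infty(X\cap P)$, $\chi_{n-k-1}^\infty(X\cap P)$ through the integral-geometric definition \eqref{eq:chi-l-infty} and the flag identity (an integral over a Grassmannian of Grassmannians collapses to a single Grassmannian integral), all the terms reorganize, up to normalizing constants, into $-\chi_{n-k-1}^\infty(X)+\chi_{n-k+1}^\infty(X)$; for $l=n-1,n$ one of the two link-terms has index out of range and drops (and is likewise killed whenever the corresponding link is empty), producing \eqref{eq:GB-n-1-n}.

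The main obstacle is the precise evaluation of $\lim_{R\to+\infty}B(R)$ in the degree-$0$ step: one must show that the curvature integral carried by the strata of $X\cap\bB_R^n$ lying on the expanding sphere converges to \emph{exactly} $\chi_n^\infty(X)+\chi_{n-1}^\infty(X)$. This demands a careful bookkeeping of the normal Morse index ${\rm ind}_{\rm nor}$ and of the second fundamental forms of these boundary strata, both inside $\bS_R^{n-1}$ and inside $\Rn$, for a possibly singular $X$, together with a Gauss--Bonnet/Morse argument on the compact link $\lk^\infty(X)$ identifying the limiting spherical integrals with Euler characteristics of its linear sections; keeping all integral-geometric constants consistent through the blow-up at infinity is the delicate point, and the passage through the flag identity is where most of the remaining care is needed in the higher-degree step.
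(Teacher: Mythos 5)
The paper does not prove Theorem~\ref{thm:GaussBonnet}; it imports it wholesale from \cite{DutertreAdvGeo,DutertreGeoDedicata2012}, so there is no in-paper proof to compare against. Your outline does reproduce the broad architecture of those references (truncate at a large sphere, apply the compact Gauss--Bonnet identity $\Lbd_0(Y)=\chi(Y)$, isolate a boundary contribution, pass to the limit, then Crofton for the higher $\Lbd_k$), and you are candid about where the hard work lies. As a \emph{proof}, though, it stops short at exactly the two places you flag, and I want to point out that both are more than ``careful bookkeeping.''

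First, the claim that the boundary contribution $B(R)$ of the strata on $\bS_R^{n-1}$ converges to $\chi_n^\infty(X)+\chi_{n-1}^\infty(X)$ is the entire content of \eqref{eq:GB-0}. Your umbilic-plus-averaging heuristic gestures at the right phenomenon, but it does not account for the normal index ${\rm ind}_{\rm nor}$ in the presence of singular strata, nor for why the radial and spherical parts decouple so cleanly into two separate Euler-characteristic averages; as written, it is a plausibility argument, not a computation. Relatedly, your ``sandwich between two balls'' step for independence from the exhaustive family presupposes that $R\mapsto\Lbd_0(X,X\cap\bB_R^n)$ already has a limit (it is a signed quantity, so monotonicity is unavailable); establishing the existence of the limit is part of the statement and needs a definability/finiteness argument up front, not as an afterthought.

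Second, in the bootstrap to $k\ge 1$ there is a genuine gap in the final reorganization. After plugging \eqref{eq:GB-0} read in $P\cong\R^{n-k}$ into your Crofton-at-infinity average, you obtain (up to constants) three averaged quantities: $\int_P\chi(X\cap P)$, $\int_P\chi_{n-k}^\infty(X\cap P)$, and $\int_P\chi_{n-k-1}^\infty(X\cap P)$. The flag identity does handle the last of these and turns it into $\chi_{n-k-1}^\infty(X)$, and the middle one becomes $\chi_{n-k}^\infty(X)$. But the target $\chi_{n-k+1}^\infty(X)$ is an average of $\chi(\lk^\infty(X\cap Q))$ over $Q$ of dimension $n-k+1$, i.e.\ one dimension \emph{higher} than the $P$'s you integrate over, and $\chi(X\cap P)$ is not a link Euler characteristic at all. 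So ``all the terms reorganize'' is asserting a nontrivial additional identity --- some linear kinematic formula relating the average of $\chi(X\cap P)-\chi_{n-k}^\infty(X\cap P)$ over $\bG(n-k,n)$ to $\chi_{n-k+1}^\infty(X)$ --- that is neither stated nor derived. Without it, the bootstrap does not close, and in fact the cited papers obtain \eqref{eq:GB-n-2}--\eqref{eq:GB-n-1-n} through a direct kinematic/Hadwiger-style argument rather than by iterating Crofton through $\Lbd_0^\infty$ of linear sections. In short: correct ingredients and a reasonable map of the proof, but the two steps you yourself call delicate are where the theorem actually lives, and neither is carried out.
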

\begin{remark}\label{rmk:matrix}
Consider the following two vectors of $\R^{n+1}$
$$
\Lbd_*^\infty(X) = (\Lbd_0^\infty(X),\ldots,\Lbd_n^\infty(X))
\;\; {\rm and} \;\;
\chi_{-*}^\infty(X) = (\chi(X),\chi_n^\infty(X),\ldots,\chi_1^\infty(X)).
$$
As seen in Theorem \ref{thm:GaussBonnet}, 
both vectors carry the same information about $X$ at infinity. Precisely,
there exists a triangular superior matrix $L$, depending only on $n$, with 
coefficients in $\{-1,1\}$ and only with $1$ on the diagonal such that
$$
\Lbd_*^\infty(X) = L\cdot \chi_{-*}^\infty(X). 
$$
\end{remark}
%
%
%
%
%
%
%
%
%
%
%
%
%
%
%
%
%
%
%
%
%
%
%
%
%
%
%
%
%
\section{The case of sub-manifolds with boundary}\label{section:smfd-bdry}

We describe the Lipschitz-Killing measures when $(X,\dd X)$ is a closed $C^2$ 
definable sub-manifold with boundary of $\Rn$ of dimension $d$. In this 
case, the partition 
$X = X^\circ \sqcup \partial X$, where $X^\circ = X \setminus \partial X$, is 
a $C^2$ definable Whitney stratification of $X$, to which we can apply the 
construction of Section \ref{section:LKM}.

\medskip 
Let $\bx \in X^\circ$. If $d < n$ then for $k = 0,\ldots,d$, we have
$$
\lambda_k^{X^\circ}(\bx)= \frac{1}{s_{n-k-1}} \int_{\bS(N_\bx X^\circ)} 
\sigma_{d-k}^{X^\circ} (\bx,\bv) \rd\bv.
$$
We get the following identities of the extrinsic geometry 
of Euclidean sub-manifolds
$$
\lambda_k^{X^\circ}(\bx)= \frac{1}{s_{n-k-1}} K_{d-k}({X^\circ},\bx),
$$ 
where $K_{d-k}(X^\circ,-)$ is the $(d-k)$-th Lipschitz-Killing curvature 
of $X^\circ$. We recall that 
$$
d-k \;\; {\rm odd}  \;\; \Longrightarrow \; K_{d-k}(X^\circ,-) \equiv 0.
$$
We recall that if $d=n$, then $\lambda_n^{X^\circ} \equiv 1$ and 
$\lambda_0^{X^\circ} =\cdots=\lambda_{n-1}^{X^\circ}  \equiv 0$. 

\medskip
Let $\by \in \partial X$ and let $\nu_\by$ be the unit vector tangent to 
$X$ at $\by$, normal to the boundary $\partial X$ and pointing inwards.
For $\bv \in \bS^{n-1}$ we recall that the following alternative holds
$$
{\rm ind}_{\rm nor}(\bv^*,X,y) = 
\left\{
\begin{array}{rcl}
1 & {\rm if} & \la \bv, \nu_\by \rangle >0 \\
0 & {\rm if} & \langle \bv, \nu_\by \rangle <0
\end{array}
\right.
.
$$
Let $\bS(N_\by \dd X)^+$ be the following open half-sphere:
$$
\bS(N_\by \dd X)^+ = \left\{ \bv \in \bS(N_\by \dd X) \; : \; 
\langle \bv, \nu_\by \rangle >0 \right\}.
$$
Therefore, for $k = 0,\ldots,d-1,$ we can write
$$
\lambda_k^{\partial X} (\by) =  \frac{1}{s_{n-k-1}} \int_{\bS(N_\by \dd X)^+}
\sigma_{d-1-k}^{\dd X} (\by,\bv) \rd\bv.
$$
The case of a definable sub-manifold with boundary of dimension $d$ yields 
the following explicit description of the Lipschitz-Killing measures.
For any bounded Borel subset $U$ of $X$ we find
\begin{equation}\label{eq:LKM-mfd-bdr-d}
\Lambda_d(X,U)= \mathcal{H}_d(U). 
\end{equation}
When $d<n$ and $k  = 0,\ldots,d-1$, we find
\begin{equation}\label{eq:LKM-mfd-bdr-dk} 
\Lambda_k(X,U)= \int_{X^\circ \cap U} \lambda_k^{X^\circ}(\bx)\rd\bx 
+ \int_{\dd X \cap U} \lambda_k^{\partial X} (\by) \rd\by.
\end{equation}
If $d=n$ and $k  = 0,\ldots,n-1$,
\begin{equation}\label{eq:LKM-mfd-bdr-nk}
\Lambda_k(X,U)= 
\int_{\dd X \cap U} \lbd_k^{\dd X}(\by) \rd \by. 
\end{equation}

%
%
%
%
%
%
%
%
%
%
%
%
%
%
%
%
%
%
%
%
%
%
%
%
%
%
%
\section{Definable families and continuity of Lipschitz-Killing 
curvature densities at infinity}\label{section:cont-curv-main}

We return to the setting of the previous sections: let $W$ be a $C^2$ 
connected sub-manifold of $\Rn \times \Rs$ which is also a closed subset of 
$\Rn$, definable in $\cM$, and let $\vp : W \to \Rs$ be the restriction
to $W$ of the canonical projection 
on $\Rs$. Let $d =\dim W$. Hence for $\by$ any regular value of $\vp$ 
either the level $\vp^{-1}(\by)$ is empty or is a $C^2$ definable 
sub-manifold of dimension $d-s$ of $\Rn$,
of the form 
$$
\vp^{-1}(\by) = W_\by \times \by \subset \Rn\times\Rs.
$$
Instead of working with the  family of levels $(\vp^{-1}(\by))_\by$ we will
 work with 
the  family $(W_\by)_\by$ of the projections onto $\Rn$ of the levels
of $\vp$, a definable family of closed subsets of $\Rn$.

\bigskip
Let $\cV_1,\ldots,\cV_\beta$ be the connected components of 
$\Rs\setminus K(\vp)$ for which $\vp^{-1}(\cV_b)$ is not empty,.
For each $b=1,\ldots,\beta$, let $\cU_{b,1},\ldots,\cU_{b,\aph_b}$ be the 
connected components of $\vp^{-1}(\cV_b)$. 
For each $\by \in \cV_b$ and each $a = 1,\ldots,\aph_b,$ let
\begin{equation}\label{eq:Wbya}
W_\by^a \times \by := W_\by \times \by \cap \cU_{b,a}.
\end{equation}
Each such sub-manifold $W_\by^a$ is not empty, closed and connected, by 
Corollary \ref{cor:trivialisation}.

\medskip
We present in this section now the looked for continuity results of the 
functions $\by \mapsto \Lbd_k^\infty(W_\by)$ nearby a regular value which
is also a (MR)-regular value of this definable family.
\begin{proposition}\label{ContLinkMappings}
For each $k = 0, \ldots, d-s$, 
for each $b=1,\ldots,\beta,$ and each $a=1,\ldots,\aph_b,$
the functions 
$$
\by \mapsto \chi_{n+1-k}^\infty(W_\by^a),\;\; {\rm and} \;\; 
\by \mapsto \Lbd_{d-s-k}^\infty(W_\by^a)
$$
are continuous over $\cV_b$,
(where $\chi_{n+1}^\infty(-)$ is the null function over the subsets of $\Rn$).
\end{proposition}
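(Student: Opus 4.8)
The plan is to prove the continuity of the functions $\by\mapsto\chi_{n+1-k}^\infty(W_\by^a)$ directly, by a dominated‑convergence argument in the integral‑geometric formula of Definition \ref{def:chi-lbd-infty}, and then to deduce the continuity of $\by\mapsto\Lbd_{d-s-k}^\infty(W_\by^a)$ from the Gauss--Bonnet identities of Theorem \ref{thm:GaussBonnet}. Fix $b$, $a$ and a point $\bc\in\cV_b$; since $K(\vp)$ is closed, $\cV_b$ is open, and it suffices to prove continuity at $\bc$.

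Consider first $\chi_l^\infty(W_\by^a)$ with $l=n+1-k$. If $l=n+1$, or if $n-l\ge d-s$, this quantity is identically $0$ (the second by the vanishing recalled after Definition \ref{def:chi-lbd-infty}, the link $\lk^\infty(W_\by^a\cap P)$ being empty for generic $P$), so there is nothing to prove; otherwise
$$
\chi_l^\infty(W_\by^a)=\frac{1}{2\,g_n^{\,l}}\int_{\bG(l,n)}\chi\!\left(\lk^\infty(W_\by^a\cap P)\right)\rd P .
$$
The integrand is bounded uniformly in $(\by,P)$: the sets $W_\by^a\cap P\cap\bS_R^{n-1}$ constitute a definable family in $\Rn$ parametrised by $(\by,P,R)$, so by the o-minimal finiteness of topological types in definable families the function $(\by,P)\mapsto\chi\!\left(\lk^\infty(W_\by^a\cap P)\right)$ takes only finitely many values. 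Next, let $\Omega\subset\bG(l,n)$ be the set of $P$ for which $W\cap(P\times\Rs)$ is a $C^2$ submanifold and $\bc\notin K(\vp|_{W\cap(P\times\Rs)})$: by Theorem \ref{thm:M-R-plane-sections} (together with the transversality statements it relies on, cf.\ a variant of Lemma \ref{lem:linear-section}), $\Omega$ is dense, open and definable, hence of full measure. For $P\in\Omega$ the value $\bc$ is regular and $(MR)$-regular for $\vp|_{W\cap(P\times\Rs)}$, so Proposition \ref{prop:link-infty-const}, applied to this restricted family and to the open‑and‑closed piece $\cU_{b,a}\cap(P\times\Rs)$ of $(\vp|_{W\cap(P\times\Rs)})^{-1}(\cV_b)$, produces a ball $\bB_P\ni\bc$ on which $\lk^\infty(W_\by^a\cap P)$ keeps a constant $C^1$-diffeomorphism type; hence $\chi\!\left(\lk^\infty(W_\by^a\cap P)\right)$ is constant in $\by$ near $\bc$. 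For any sequence $\by_m\to\bc$ the integrands therefore converge, for almost every $P$, to their value at $\bc$, and are uniformly bounded, so dominated convergence gives $\chi_l^\infty(W_{\by_m}^a)\to\chi_l^\infty(W_\bc^a)$.

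Granting this for all relevant $k$ and using the vanishing recalled above, \emph{every} function $\by\mapsto\chi_i^\infty(W_\by^a)$, $i=0,\dots,n$, is continuous on $\cV_b$. Each $W_\by^a$ is a closed definable subset of $\Rn$ (it is a finite union of connected components of the closed definable set $\vp^{-1}(\by)$), so Theorem \ref{thm:GaussBonnet} applies to it and, by Remark \ref{rmk:matrix}, expresses $\Lbd_{d-s-k}^\infty(W_\by^a)$ as a fixed integer combination of $\chi(W_\by^a)$ and of the numbers $\chi_i^\infty(W_\by^a)$, $i=0,\dots,n$. By Corollary \ref{cor:trivialisation} the map $\vp|_{\cU_{b,a}}:\cU_{b,a}\to\cV_b$ is a locally trivial fibre bundle over the connected set $\cV_b$, so all its fibres $W_\by^a$ are diffeomorphic and $\chi(W_\by^a)$ is constant there; the remaining terms are continuous by the preceding paragraph. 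Hence $\by\mapsto\Lbd_{d-s-k}^\infty(W_\by^a)$ is continuous on $\cV_b$.

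The crux is the dominated‑convergence step, where two points require attention: the uniform bound on $\chi\!\left(\lk^\infty(W_\by^a\cap P)\right)$, which rests essentially on the o-minimal finiteness of topological types in definable families; and the connected‑component bookkeeping under generic plane sections, i.e.\ the fact that $W_\by^a\cap P$ is governed by Proposition \ref{prop:link-infty-const} applied to $\vp|_{W\cap(P\times\Rs)}$ — which is legitimate precisely because $\cU_{b,a}\cap(P\times\Rs)$ is open and closed in $(\vp|_{W\cap(P\times\Rs)})^{-1}(\cV_b)$, so that the relevant triviality and link‑constancy statements pass to it.
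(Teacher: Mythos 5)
Your proposal is correct and follows essentially the same route as the paper: generic plane sections from Theorem \ref{thm:M-R-plane-sections}, local constancy of the link Euler characteristic on slices from Proposition \ref{prop:link-infty-const}, Lebesgue dominated convergence over the Grassmannian, and then Theorem \ref{thm:GaussBonnet} with Remark \ref{rmk:matrix} together with the constancy of $\chi(W_\by^a)$ (Corollary \ref{cor:trivialisation}) to pass to $\Lbd_{d-s-k}^\infty$. You are somewhat more explicit than the paper on two bookkeeping points — the uniform bound justifying dominated convergence (via finiteness of topological types in definable families) and the fact that $\cU_{b,a}\cap(P\times\Rs)$ is open-and-closed in the sliced family — but the underlying argument is the same.
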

\begin{proof}
Theorem \ref{thm:GaussBonnet} and Remark \ref{rmk:matrix} guarantee that it is 
sufficient to prove it for the functions $\by \mapsto 
\chi_{n+1-k}^\infty(W_\by)$. Observe that the mapping $\by \mapsto 
\chi(W_\by^a)$ is constant on 
$\cV_b$ for each $a=1,\ldots,\aph_b,$ by Corollary \ref{cor:trivialisation}.

\medskip\noindent
For a closed definable subset $X$ of $\R^n$, let us write 
$$
\chi^\infty(X) := \chi(\lk^\infty(X)).
$$
$\bullet$ Assume first that $W_\by$ is connected for $\by\notin K(\vp)$.
The case $k=0$ is obvious by definition, so
we treat first the case $k=1$.
Let $\bc \in \R^s \setminus K(\varphi)$.
Proposition \ref{prop:link-infty-const} implies the existence of 
an open neighbourhood $U$ of $\bc$ in $\Rs$ such that 
$$
\by \in U \Longrightarrow 
2 \chi_n^\infty(W_\by) = 
\chi^\infty (W_\by) =  \chi^\infty (W_\bc).
$$
In other words, the mapping 
$\by \mapsto \chi_n^\infty (W_\by)$
is constant on each connected component of $\Rs \setminus K(\varphi)$. 

\medskip
We treat now the case $k =2,\ldots d-s$,
and $\bc \in \R^s \setminus K(\varphi)$. 
Theorem \ref{thm:M-R-plane-sections} ensures the existence of an open dense 
definable subset $\cU^k_\bc$ of $\bG(n+1-k,n)$ such that
$$
P \in \cU_\bc^k \Longrightarrow \bc\notin K(\vp|_{P\times\Rs}).
$$
For any $P \in \bG(n+1-k,n)$ and any $\by\in \Rs$ we find
$$
(\vp|_{P\times\Rs})^{-1}(\by) = (W_\by\cap P) \times\by \subset P\times\Rs.
$$
As for $k=1$, given $P \in \cU_\bc^k$, there exists an open neighbourhood 
$U_P$ of $\bc$ contained in $\Rs \setminus K(\vp|_{P\times\Rs})$ 
such that for all $\by \in U_P$, we find
$$
\chi^\infty (W_\by\cap P) = \chi^\infty (W_\bc \cap P),
$$
which we can write as 
$$
\lim_{\by \to \bc} \; \chi^\infty (W_\by \cap P) = \chi^\infty 
(W_\bc \cap P).
$$
Now Lebesgue Dominated Convergence Theorem provides  
\begin{eqnarray*}
\lim_{\by \to \bc} \int_{\bG_n^{n+1-k}} \chi^\infty (W_\by \cap P) \rd P
& = & \int_{\bG_n^{n+1-k}} \lim_{\by \to \bc} \chi^\infty (W_\by\cap P) \rd P
\\
& = & 
\int_{\bG_n^{n+1-k}} \chi^\infty (W_\bc \cap P) \rd P,
\end{eqnarray*}
namely the desired continuity result.

\medskip\noindent
$\bullet$ Let $b \in \{1,\ldots,\beta\}$ and let $\by\in\cV_b$.
For each $a = 1,\ldots,\aph_b$, the connected case implies that
each function $\by \mapsto \chi_{n+1-k}^\infty(W_\by^a)$ is continuous over
$\cV_b$. 
\end{proof}

\medskip
Let $X$ be a closed definable sub-manifold of $\R^q$ of dimension $d$.
For $i=0,\ldots,d$, let $K_i(X,\bx)$ be the $i$-th Lipschitz-Killing curvature
of $X$ at $\bx$. For $i\geq d+1$, we may define $K_i(X,-)\equiv 0$ if need be.
The global polar-like invariant avatars  we are looking for in our 
real present context are presented in the next definition.
\begin{definition}\label{def:kp-i-infty} 
Let $i \in \{0,\ldots,d\}$. The \em 
$i$-th Lipschitz-Killing  curvature density at infinity
of the closed definable sub-manifold $X$ of $\R^q$ of dimension $d$ 
\em is defined as 
$$
\kp_i^\infty(X) := \lim_{R \to +\infty} \frac{1}{R^{d-i}} \int_{X \cap 
\bB_R^n} K_i(X,\bx) \rd\bx.
$$
\end{definition}

\medskip
The second main result of the paper is the following
\begin{theorem}\label{ContCurvaturesMappings}
For each $i = 0,\ldots,d-s$, 
for each $b=1,\ldots,\beta,$ and each $a=1,\ldots,\aph_b,$
the functions 
$$
\by \to \kp_i^\infty(W_\by^a)
$$ 
are continuous over $\cV_b$. Thus the functions $\by\mapsto 
\kp_i^\infty(W_\by)$ are continuous on $\mathbb{R}^s \setminus K(\vp)$.
\end{theorem}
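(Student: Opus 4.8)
The plan is to reduce the statement to Proposition~\ref{ContLinkMappings}, which already delivers the continuity of the Lipschitz--Killing invariants at infinity $\by\mapsto\Lbd_{d-s-k}^\infty(W_\by^a)$ over each connected component $\cV_b$. What remains is to observe that, for $\by$ a regular value, $\kp_i^\infty(W_\by^a)$ equals $\Lbd_{d-s-i}^\infty(W_\by^a)$ up to a fixed positive factor depending only on $n,d,s,i$; granting this normalization identity, a finite sum over connected components finishes the proof. Along the way the identity will also show that the limit defining $\kp_i^\infty$ actually exists.

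First I would fix $b$, $a$ and $\by\in\cV_b$ and record that $W_\by^a$ is a closed, definable, connected $C^2$ sub-manifold of $\Rn$ of dimension $d-s$ with empty boundary (since $\by$ is a regular value of $\vp$ and $\cU_{b,a}$ is open in $W$), and that $W_\by^a$ is open in $W_\by$. Assuming first $d-s<n$, the description of the Lipschitz--Killing measures of a sub-manifold with empty boundary recalled in Section~\ref{section:smfd-bdry}, namely $\lbd_k^{W_\by^a}(\bx)=\tfrac{1}{s_{n-k-1}}K_{(d-s)-k}(W_\by^a,\bx)$, gives for every $i\in\{0,\dots,d-s\}$ (put $k=(d-s)-i$) and every $R>0$
\begin{equation*}
\Lbd_{(d-s)-i}\!\left(W_\by^a,\,W_\by^a\cap\bB_R^n\right)=\frac{1}{s_{n-1-(d-s-i)}}\int_{W_\by^a\cap\bB_R^n}K_i(W_\by^a,\bx)\,\rd\bx .
\end{equation*}
Theorem~\ref{thm:GaussBonnet} ensures that $\Lbd_{(d-s)-i}^\infty(W_\by^a)$ exists; dividing the identity above by $b_{d-s-i}R^{d-s-i}$ and letting $R\to+\infty$ then shows that the limit defining $\kp_i^\infty(W_\by^a)$ exists and that
\begin{equation*}
\kp_i^\infty(W_\by^a)=b_{d-s-i}\;s_{n-1-(d-s-i)}\;\Lbd_{d-s-i}^\infty(W_\by^a),\qquad i=0,\dots,d-s .
\end{equation*}
The case $d-s=n$, where $W_\by^a$ is open in $\Rn$, is simpler: $K_i(W_\by^a,-)\equiv0$ for $i\geq1$ and $\kp_0^\infty(W_\by^a)=b_n\,\Lbd_n^\infty(W_\by^a)$; the odd $i$ (for which $K_i\equiv0$) are trivial throughout.

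With the normalization identity at hand, Proposition~\ref{ContLinkMappings} applied with $k=i$ shows that $\by\mapsto\kp_i^\infty(W_\by^a)$ is continuous on $\cV_b$ for each $a=1,\dots,\aph_b$. To descend from the components $W_\by^a$ to $W_\by$ I would invoke Corollary~\ref{cor:trivialisation} to write $W_\by=\bigsqcup_{a=1}^{\aph_b}W_\by^a$ for $\by\in\cV_b$, so that $K_i(W_\by,-)$ restricts to $K_i(W_\by^a,-)$ on the open subset $W_\by^a$ and hence $\kp_i^\infty(W_\by)=\sum_{a=1}^{\aph_b}\kp_i^\infty(W_\by^a)$, a finite sum of continuous functions on $\cV_b$. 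Since $\Rs\setminus K(\vp)$ is the disjoint union of the $\cV_b$ together with the open components over which $\vp^{-1}$ is empty --- where $W_\by=\emptyset$ and every $\kp_i^\infty(W_\by)$ vanishes --- the function $\by\mapsto\kp_i^\infty(W_\by)$ is continuous on all of $\Rs\setminus K(\vp)$. I do not expect a genuine difficulty here; the step requiring the most care, though not a real obstacle, is the bookkeeping of the dimension shifts and of the normalization constants relating the two families of ``curvature at infinity'' quantities, together with the degenerate cases $d-s=n$, $i$ odd, and empty fibres.
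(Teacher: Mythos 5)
Your proposal is correct and takes essentially the same route as the paper: both derive the continuity of $\by\mapsto\kp_i^\infty(W_\by^a)$ from Proposition~\ref{ContLinkMappings} via the normalization identity $\kp_i^\infty(W_\by^a)=b_{d-s-i}\,s_{n-(d-s)+i-1}\,\Lbd_{d-s-i}^\infty(W_\by^a)$ coming from Section~\ref{section:smfd-bdry}. The only cosmetic difference is that you invoke the continuity of the $\Lbd_*^\infty$ directly, whereas the paper unpacks the same identity further through the $\chi_*^\infty$ terms of Theorem~\ref{thm:GaussBonnet}; your handling of the edge cases ($d-s=n$, odd $i$, empty fibres) is sound and consistent with what the paper leaves implicit.
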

\begin{proof}
As we saw in the demonstration of Proposition \ref{ContLinkMappings},
we can work only with connected $W_\by$ as far as $\by$ lies in $\Rs\setminus 
K(\vp)$, which we will assume for the rest of the proof.

\medskip
We treat first the case $i=d-s$. Theorem \ref{thm:GaussBonnet} (see also
\cite[Theorem 5.6]{DutertreAdvGeo}) gives 
$$ 
\frac{1}{s_{n-1}} \kp_{d-s}^\infty (W_\by) = 
\chi (W_\by) - \chi_n^\infty(W_\by) - 
\chi_{n-1}^\infty(W_\by)
$$
for $\by$ any regular value. 
Applying Proposition \ref{ContLinkMappings} concludes this case.

\medskip
For $2 \le i \le d-s-1$ and $\by$ any regular value of $\vp$,
Theorem \ref{thm:GaussBonnet} (see also 
\cite[Theorem 4.1]{DutertreGeoDedicata2012}) gives 
$$
\frac{1}{s_{n-(d-s)+i-1} \cdot b_{d-s-i}} \kp_i^\infty(W_\by)
= - \chi_{n-(d-s)+i-1}^\infty(W_\by) + \chi_{n-(d-s)+i+1}^\infty(W_\by).
$$
We conclude again by Proposition \ref{ContLinkMappings}.

\medskip
For $i=0$ or $1$ and $\by$ any regular value of $\vp$, 
Theorem \ref{thm:GaussBonnet} gives again 
$$
\frac{1}{s_{n-(d-s)+i-1} \cdot b_{d-s-i}} \kp_i^\infty(W_\by) = 
\chi_{n-(d-s)+i+1}^\infty(W_\by).
$$
We remark that the term 
$$
\chi_{n-(d-s)}^\infty(W_\by)
$$
that appears in the equality concerning the curvature $K_1$ is in fact zero, 
because generically $W_\by \cap P$ is of dimension $0$, hence compact.
We conclude with the previous proposition again. 
\end{proof}
\begin{remark}\label{rmk:Ki-iodd)}
For odd $i$, the $i$-th Lipschitz-Killing curvature function $K_i(X,-)$ 
of a given sub-manifold $X$ is identically null, therefore the mapping 
in our context $\by \mapsto  \kp_i^\infty(W_\by)$
is the null mapping over $\Rs\setminus K(\vp)$.
\end{remark}
%
%
%
%
%
%
%
%
%
%
%
%
%
%
%
%
%
%
%
%
%
%
%
%
%
%
%
%
%
\section{Curvatures of hypersurfaces}\label{section:curv-hyper}

In this section, we study the special case of regular levels of a $C^2$ 
definable function, and express some curvature-like integrals over them as 
Lipschitz-Killing measures. 

\medskip
Let $f : \mathbb{R}^n \to \mathbb{R}$ be a $C^2$ definable function. 
We assume that $0$ is a regular value of $f$ taken by $f$, 
so that $Y=f^{-1}(0)$ is a non-empty $C^2$ hypersurface. 
We orientate $Y$ by the normal vector $-\nabla f|_Y$, 
with the convention that $(\xi_1,\ldots,\xi_{n-1})$ is a positive basis of 
$T_\bx Y$ if and only if $(\xi_1,\ldots,\xi_{n-1},-\nabla f (\bx))$ is
a positive basis of $\mathbb{R}^n$. 

\smallskip\noindent
The Gauss mapping $\nu_Y$ of $Y$ is the following mapping:
$$
\nu_Y  :  Y \, \to \, \bS^{n-1}, \;\; 
\bx \mapsto  -\frac{\nabla f }{\vert \nabla f \vert} (\bx) .
$$
Its derivative 
$$
D_\bx \nu_Y : T_\bx Y \mapsto T_{\nu(\bx)} \bS^{n-1} = T_\bx Y
$$ 
is a self-adjoint operator. 
The principal curvatures $k_1(\bx),\ldots,k_{n-1}(\bx)$ are the opposite of 
the eigenvalues of $D_\bx \nu_Y$. Let $\sigma_i^Y$ be the $i$-th elementary 
symmetric functions of the principal curvatures. Note that $\sigma_{n-1}^Y$ 
is the Gauss-Kronecker curvature and $\sgm_0^Y \equiv 1$. These elementary 
symmetric functions are 
related to the Lipschitz-Killing curvatures of $Y$ and the sub-level set 
$\cY := \{ f \le 0 \}$ in the  following way: Given $\bx \in Y$, 
since $Y$ is the boundary of $\cY$, we note that
$$
i \;\; {\rm even} \;\; \Longrightarrow \; 2\sigma_i^Y (\bx)=K_i(Y,\bx)
= s_i \cdot \lambda_{n-1-i}^Y (\bx).
$$
\begin{definition}\label{def:sgmi-infty}
Let $i \in \{0,\ldots,n-1\}$. The \em $i$-th symmetric principal 
curvature density at  infinity of the hypersurface $Y$ \em is 
defined as
\begin{equation}\label{eq:sgmi-infty}
\sgm_i^\infty (Y) := \lim_{R \to +\infty} \frac{1}{R^{n-1-i}}
\int_{Y \cap  \bB_R^n} \sigma_i^Y (\bx) \rd\bx.
\end{equation}
The density at infinity of $\cY$ is defined as
\begin{equation}\label{eq:thetan-infty}
\Tht_n^\infty (\cY) := \lim_{R \to +\infty} \frac{1}{R^n}
\cH_n(\cY \cap  \bB_R^n).
\end{equation}
\end{definition}
In this context, Equations \eqref{eq:LKM-mfd-bdr-nk} give
$$
\sgm_{n-1-i}^\infty  (Y) = s_{n-1-i}b_i\cdot\Lbd_i^\infty (\cY), \;\;
i=0,\ldots,n-1.
$$ 
The next corollary gives explicit relations between the numbers 
$\chi_l^\infty(-)$ of Equation \eqref{eq:chi-l-infty} and Equation 
\eqref{eq:chi-infty-max} and the numbers $\sgm_l^\infty(-)$ of Equation 
\eqref{eq:sgmi-infty}. It is an
application of the 
Gauss-Bonnet formulas of Theorem \ref{thm:GaussBonnet} for the levels of
the function $f$,
the description of the Gauss-Bonnet measures for manifolds with 
boundary and Equations \eqref{eq:LKM-mfd-bdr-nk} for the numbers 
$\sgm_i^\infty(-)$. 
\begin{corollary}\label{cor:sgmi-infty}
\begin{equation}\label{eq:GB-hyp-n}
\Tht_n^\infty(\cY) = \chi_1^\infty(\cY),
\end{equation}
and for $i=n-1$
\begin{equation}\label{eq:GB-hyp-n-1}
\frac{1}{s_{n-1}}\sgm_{n-1}^\infty (Y)  = 
\chi(\cY) - \chi_n^\infty(\cY) - \chi_{n-1}^\infty(\cY),
\end{equation}
and for $i=0,\ldots,n-2$,
\begin{equation}\label{eq:GB-hyp-i}
\frac{1}{s_ib_{n-i-1}}\sgm_i^\infty (Y) =  
- \chi_i^\infty(\cY) + \chi_{i+2}^\infty(\cY).
\end{equation}
\end{corollary}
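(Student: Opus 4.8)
The plan is to apply the Gauss--Bonnet formulas at infinity of Theorem~\ref{thm:GaussBonnet} to the single closed definable set $X:=\cY=\{f\le 0\}$, and then to rewrite the output through the boundary dictionary of Section~\ref{section:smfd-bdry}. Under the standing hypothesis $\cY$ is a closed definable $C^2$ sub-manifold of $\Rn$ of dimension $d=n$ with boundary $\dd\cY=Y$, so that $\cY=\cY^\circ\sqcup Y$ is precisely the stratification treated in Section~\ref{section:smfd-bdry}.

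First I would record the local picture. On $\cY^\circ$, open in $\Rn$, one has $\lambda_n^{\cY^\circ}\equiv 1$ and $\lambda_0^{\cY^\circ}=\dots=\lambda_{n-1}^{\cY^\circ}\equiv 0$. Along $Y$ the key point is to identify the inward unit normal to $\cY$: since $0$ is a regular value of $f$, this inward normal is exactly the Gauss direction $\nu_Y=-\nabla f/|\nabla f|$ fixed in Section~\ref{section:curv-hyper}, so that $\bS(N_\by\dd\cY)^+$ reduces to the single point $\nu_\by$ and Equation~\eqref{eq:LKM-mfd-bdr-nk} collapses without sign ambiguity to $\lambda_k^{Y}(\by)=s_{n-1-k}^{-1}\,\sgm_{n-1-k}^{Y}(\by)$ for $k=0,\dots,n-1$. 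Integrating the $\lambda_k$ over $\cY\cap\bB_R^n$, dividing by $b_kR^k$ and letting $R\to+\infty$, I obtain the relations already recorded just above the statement, $\sgm_{n-1-i}^\infty(Y)=s_{n-1-i}b_i\,\Lbd_i^\infty(\cY)$ for $i=0,\dots,n-1$, while $\Lbd_n(\cY,-)=\cH_n$ from \eqref{eq:LKM-mfd-bdr-d} identifies $\Tht_n^\infty(\cY)$ with $\Lbd_n^\infty(\cY)$ up to the normalizing ball volume of \eqref{eq:lbd-l-infty}. In particular the limits defining $\sgm_i^\infty(Y)$ and $\Tht_n^\infty(\cY)$ all exist, because the $\Lbd_k^\infty(\cY)$ do by Theorem~\ref{thm:GaussBonnet}.

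Next I would substitute Theorem~\ref{thm:GaussBonnet} for $X=\cY$ and match indices, writing $j=n-1-i$ so that $s_i^{-1}b_{n-1-i}^{-1}\sgm_i^\infty(Y)=\Lbd_j^\infty(\cY)$. For $\Tht_n^\infty$: \eqref{eq:GB-n-1-n} with $l=n$ gives $\Lbd_n^\infty(\cY)=\chi_1^\infty(\cY)$, hence \eqref{eq:GB-hyp-n}. For $i=n-1$, i.e.\ $j=0$: since $b_0=1$, \eqref{eq:GB-0} reads $s_{n-1}^{-1}\sgm_{n-1}^\infty(Y)=\Lbd_0^\infty(\cY)=\chi(\cY)-\chi_n^\infty(\cY)-\chi_{n-1}^\infty(\cY)$, which is \eqref{eq:GB-hyp-n-1}. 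For $1\le i\le n-2$, i.e.\ $1\le j\le n-2$: \eqref{eq:GB-n-2} with $k=j=n-1-i$ gives $\Lbd_j^\infty(\cY)=-\chi_i^\infty(\cY)+\chi_{i+2}^\infty(\cY)$, which is \eqref{eq:GB-hyp-i}. Finally for $i=0$, i.e.\ $j=n-1$: \eqref{eq:GB-n-1-n} with $l=n-1$ gives $\Lbd_{n-1}^\infty(\cY)=\chi_2^\infty(\cY)$, and this agrees with \eqref{eq:GB-hyp-i} for $i=0$ because the missing term $\chi_0^\infty(\cY)$ vanishes identically (the $0$-plane in $\bG(0,n)$ meets $\cY$ in at most the origin, whose link at infinity is empty).

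Since the argument is almost entirely bookkeeping, the steps requiring genuine care --- and the ones I expect to be the main obstacle --- are: (i) pinning down the orientation so that the boundary second fundamental form of $\cY$ along $Y$ is literally the one whose elementary symmetric functions are the $\sgm_i^Y$ of Section~\ref{section:curv-hyper}, which is where orienting $Y$ by $-\nabla f|_Y$ is used; (ii) keeping apart the two unrelated index conventions, the curvature index on the hypersurface $Y\subset\Rn$ versus the Lipschitz--Killing index on the $n$-dimensional set $\cY$, together with the constants $s_\bullet,b_\bullet$ and their boundary values $s_0=2$, $b_0=1$; and (iii) recording that existence of the densities at infinity of $Y$ and of $\cY$ is part of the conclusion rather than an assumption. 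The degenerate cases $n=1,2$ are immediate and can be dispatched separately.
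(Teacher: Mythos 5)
Your proposal is correct and is exactly the argument the paper has in mind: the authors only sketch the proof as "an application of the Gauss-Bonnet formulas of Theorem~\ref{thm:GaussBonnet}, the description of the Gauss-Bonnet measures for manifolds with boundary, and Equations~\eqref{eq:LKM-mfd-bdr-nk}," and you have filled in precisely that bookkeeping. The identification $\lambda_k^{Y}(\by)=s_{n-k-1}^{-1}\sgm_{n-1-k}^{Y}(\by)$ (via the collapse of $\bS(N_\by Y)^+$ to the single inward normal $\nu_\by=-\nabla f/|\nabla f|$, which matches the orientation used to define $\sgm_i^Y$), the index substitution $j=n-1-i$, and the observation that the $i=0$ case of \eqref{eq:GB-hyp-i} agrees with \eqref{eq:GB-n-1-n} because $\chi_0^\infty(\cY)=0$, are all accurate.

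One point worth flagging: in your second paragraph you correctly note that \eqref{eq:LKM-mfd-bdr-d} and Definitions~\eqref{eq:lbd-l-infty}, \eqref{eq:thetan-infty} give $\Tht_n^\infty(\cY)=b_n\,\Lbd_n^\infty(\cY)$, i.e.\ the two densities differ by the ball-volume normalization, but you then write ``hence \eqref{eq:GB-hyp-n}'' as though $\Tht_n^\infty(\cY)=\Lbd_n^\infty(\cY)$. Your derivation actually yields $\Tht_n^\infty(\cY)=b_n\chi_1^\infty(\cY)$; the factor $b_n$ is also absent from the paper's statement \eqref{eq:GB-hyp-n} (and from \eqref{eq:hyp-n} in the proof of Theorem~\ref{thm:main-hyper}), so this appears to be a normalization slip carried over from the source rather than an error you introduced --- but your writeup should not leave the internal inconsistency between noticing the factor and then dropping it.
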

\begin{remark}
We can orientate $Y$ by the normal vector $\nabla f|_Y$, with the convention 
that $(\xi_1,\ldots,\xi_{n-1})$ is a positive basis of $T_\bx Y$ if and only 
if $(\nabla f(\bx),\xi_1,\ldots,\xi_{n-1})$ is a positive basis of 
$\Rn$.
In this case, the Gauss mapping $\nu_H$ of $H$ is the mapping:
$$
\nu_Y  :  Y \, \to \, \bS^{n-1}, \;\; 
\bx \mapsto  \frac{\nabla f }{\vert \nabla f \vert} (\bx) .
$$
and the principal curvatures $k_1(\bx),\ldots,k_{n-1}(\bx)$ are the 
eigenvalues of $D_\bx \nu_Y$.
\end{remark}
%
%
%
%
%
%
%
%
%
%
%
%
%
%
%
%
%
%
%
%
%
%
%
%
%
%
%
%
%
%
\section{Continuity of curvature integrals and families of 
hypersurfaces}\label{section:cont-curv-hyper}
In this last section we expand and strengthen the study initiated 
in \cite{DuGr1}, addressing here the special case of families of hypersurfaces. 

\medskip
We work within the context of Section \ref{section:sub-levels}.
We are given a $C^2$ definable function $F:\Rn \times\Rs \to \R$ for
which $0$ is a regular value. Let again $\cW$ and $W$ be defined as
$$
\cW := \{\bp:F(\bp)\leq 0\}, \;\; {\rm and} \;\; W := \{\bp:F(\bp)=0\}.
$$
The mappings $\omg,\vp$ are the restriction of the projection
$\Rn\times\Rs \to\Rs$ to $\cW,W$, respectively.
Practically we work with the connected components of $W$, thus we further 
assume that $W$ is connected. For each $\by\in\Rs$, the
function $f_\by:\Rn\to\R$ defined as $\bx \mapsto F(\bx;\by)$ is also $C^2$
and definable. 
We recall that  
$$
\cW\cap\Rn\times\by = \cW_\by \times \by, \;\; {\rm and} \;\; 
\vp^{-1}(\by) = W_\by \times \by.
$$
Whenever $\by$ is a regular value of $\vp$, the hypersurface $W_\by= 
f_\by^{-1}(0)$ bounds the $C^2$ closed definable $n$-dimensional 
sub-manifold with boundary $\cW_\by := \{f_\by \le 0\}$.

Let $\cV_1,\ldots,\cV_\beta$ be the connected components of $\Rs
\setminus K(\vp)$ for which $\vp^{-1}(\cV_b)$ is not empty. For each $b=1,\ldots,\beta$, let $\cU_{b,1}, \ldots,
\cU_{b,\aph_b}$ be the connected components of $\omg^{-1}(\cV_b)$. For 
each couple $(b,a)$ and each $\by\in\cV_b$, let 
$$
\cW_\by^a := \cW_\by \cap \cU_{b,a}. 
$$
We obtain the following:
\begin{proposition}\label{prop:cont-link-hyper}
For each $b=1, \ldots,\beta,$ for each $a=1,\ldots,\aph_b$, and for 
$k = 1,\ldots,n$, the functions $\by \mapsto \chi_k^\infty(\cW_\by^a)$ 
are continuous over $\R\setminus K(\vp)$.
\end{proposition}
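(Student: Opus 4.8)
The plan is to replay the proof of Proposition~\ref{ContLinkMappings} with the sub-level family $(\cW_\by)$ in place of the hypersurface family $(W_\by)$, using Proposition~\ref{prop:link-infty-const-sub-level} instead of Proposition~\ref{prop:link-infty-const} and the projection $\omg$ instead of $\vp$. The first preliminary observation I would record is that $K(\omg)=K(\vp)$, and more precisely that $K(\omg|_{P\times\Rs})=K(\vp|_{P\times\Rs})$ for every $P\in\bG(k,n)$ for which $0$ is a regular value of $F|_{P\times\Rs}$: on the open part $\cW\setminus W$ the mapping $\omg$ (resp.\ its restriction to $(\cW\setminus W)\cap(P\times\Rs)$) is a submersion whose Rabier number is identically $1$, so it produces neither critical nor asymptotic critical values, while on the boundary it coincides with $\vp$ (resp.\ $\vp|_{P\times\Rs}$). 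Next, fixing $b$ and $a$, Corollary~\ref{cor:trivial-sub-level} shows that $\cW_\by^a$ is non-empty and connected for $\by\in\cV_b$ and that $\by\mapsto\chi(\cW_\by^a)$ is constant on $\cV_b$; hence it suffices to prove continuity of $\by\mapsto\chi_k^\infty(\cW_\by^a)$ for $k=1,\dots,n$, and by additivity of $\chi_k^\infty$ over the components at infinity it is enough to treat the connected case and apply Proposition~\ref{prop:link-infty-const-sub-level} componentwise.

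For $k=n$, Equation~\eqref{eq:chi-infty-max} gives $\chi_n^\infty(\cW_\by^a)=\tfrac12\chi(\lk^\infty(\cW_\by^a))$, and Proposition~\ref{prop:link-infty-const-sub-level} applied at each $\bc\in\cV_b$ shows this quantity is locally constant on $\cV_b$, hence continuous. For $1\le k\le n-1$ and $\bc\in\cV_b$, I would combine Theorem~\ref{thm:M-R-plane-sections} (applied to the hypersurface $W$, which has codimension $1$ in $\Rn\times\Rs$, so the theorem applies to all planes of the form $P\times\Rs$ with $P\in\bG(k,n)$) with a transversality argument in the spirit of Lemma~\ref{lem:linear-section} to produce a definable dense open $\cU_\bc^k\subset\bG(k,n)$ such that, for every $P\in\cU_\bc^k$, the value $0$ is a regular value of $F|_{P\times\Rs}$ and $\bc\notin K(\vp|_{P\times\Rs})=K(\omg|_{P\times\Rs})$. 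For such $P$ the set $\cW_\by\cap P=\{\bx\in P:f_\by(\bx)\le 0\}$ is, for $\by$ near $\bc$, the $\by$-sub-level set of $F|_{P\times\Rs}$, so Proposition~\ref{prop:link-infty-const-sub-level} applied to $\omg|_{P\times\Rs}$ yields $\chi(\lk^\infty(\cW_\by\cap P))=\chi(\lk^\infty(\cW_\bc\cap P))$ on a neighbourhood of $\bc$, i.e.\ $\lim_{\by\to\bc}\chi(\lk^\infty(\cW_\by\cap P))=\chi(\lk^\infty(\cW_\bc\cap P))$ for almost every $P\in\bG(k,n)$. Since $\{(\cW_\by\cap P)\cap\bS_R^{n-1}\}$ is a definable family with parameters $(\by,P,R)$, it has finitely many topological types, so the integrands $P\mapsto\chi(\lk^\infty(\cW_\by\cap P))$ are uniformly bounded; Lebesgue's dominated convergence theorem then gives $\lim_{\by\to\bc}\chi_k^\infty(\cW_\by)=\chi_k^\infty(\cW_\bc)$, and the same argument run componentwise (or $\chi_k^\infty(\cW_\by)=\sum_a\chi_k^\infty(\cW_\by^a)$) gives continuity of each $\by\mapsto\chi_k^\infty(\cW_\by^a)$ on $\cV_b$, hence on $\Rs\setminus K(\vp)$.

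The argument is in one respect lighter than that of Proposition~\ref{ContLinkMappings}: since $\cW_\by$ has full dimension $n$ in $\Rn$, the slice $\cW_\by\cap P$ has dimension exactly $k\ge 1$ for generic $P$, so there is no degenerate low-$k$ situation in which a $\chi_j^\infty$-term has to be argued away. The step requiring the most care — though it is routine given Section~\ref{section:sub-levels} and Section~\ref{section:plane-section-malgrange} — is the plane-section step: one must check that transversality of $W$ with $P\times\Rs$ together with $\bc\notin K(\vp|_{P\times\Rs})$ is exactly the data needed to rerun the sub-level trivialisation of Proposition~\ref{prop:trivial-sub-level}, and hence Proposition~\ref{prop:link-infty-const-sub-level}, inside $P\times\Rs$, and that the generic sets furnished by Theorem~\ref{thm:M-R-plane-sections} and by the transversality argument can indeed be intersected. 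Everything else is the dominated-convergence packaging already carried out in Proposition~\ref{ContLinkMappings}.
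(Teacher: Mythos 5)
Your proposal follows essentially the same approach as the paper's proof: treat $k=n$ first via Proposition~\ref{prop:link-infty-const-sub-level}, then for $1\le k\le n-1$ combine the generic-plane-section Theorem~\ref{thm:M-R-plane-sections} with Proposition~\ref{prop:link-infty-const-sub-level} applied in the slice $P\times\Rs$, and close with dominated convergence as in Proposition~\ref{ContLinkMappings}. Your write-up is actually somewhat more careful than the paper's in two places it glosses over: you explicitly record that $K(\omg)=K(\vp)$ and its sliced version (which the paper uses silently), and you explicitly insert the transversality condition ``$0$ is a regular value of $F|_{P\times\Rs}$'' before invoking the sub-level trivialisation inside $P\times\Rs$, and you justify the dominating bound via the finiteness of topological types in a definable family — all of which is correct and worth spelling out, but does not constitute a different route.
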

\begin{proof}
We can assume that $b=1$ and $\aph_1=1$ as well.
Thus $\cW_\by^a = \cW_\by$.

\smallskip
We treat first the case $k=n$. 
Proposition \ref{prop:link-infty-const-sub-level} shows that the following 
function is constant on $\cV_b$
$$
\by \mapsto \chi_n^\infty(\cW_\by) = \chi^\infty(\cW_\by).
$$

\smallskip
We assume that $1 \le k \le n$. Let $\bc \in \Rs \setminus K (\vp)$.  
Theorem \ref{thm:M-R-plane-sections} guarantees  that for almost all $P \in 
\bG(k,n)$, the value $\bc$ does not lie in $K_\infty (\vp|_{P\times\Rs})$. 
Proposition \ref{prop:link-infty-const-sub-level}
gives the existence of an open neighbourhood $V_P$ of $\bc$ in $\Rs$
such that for all $\by$ in $V_P$, 
$$
\chi^\infty (\cW_\by \cap P ) = \chi^\infty (\cW_\bc \cap P ).
$$ 
The proof ends applying Lebesgue Dominated Convergence Theorem, as was 
done in the proof of Proposition \ref{ContLinkMappings}. 
\end{proof}
For each $b=1,\ldots,\beta,$ let $\cU_{b,1},\ldots,\cU_{b,\aph_b}$ be the 
connected components of $\vp^{-1}(\cV_b)$. For $\by$ in $\cV_b$ and 
$a=1,\ldots,\aph_b$, let 
$$
\cW_\by^a := \cW_\by \cap \cU_{b,a}, \;\; {\rm and} \;\; 
W_\by^a := W_\by \cap \cU_{b,a}.
$$
The last main result of the paper is the following:
\begin{theorem}\label{thm:main-hyper}
For each $b=1,\ldots,\beta,$ and for each $a=1,\ldots,\aph_b$, the following 
statements hold true:

\smallskip\noindent 
(1) For each $i=0,\ldots,n-1,$ the functions $\by \mapsto \sgm_i^\infty
(W_\by^a)$ are continuous on $\cV_b$. Thus $\by \mapsto 
\sgm_i^\infty(W_\by)$ is continuous over $\Rs\setminus K(\vp)$.
 
\smallskip\noindent
(2) The function $\by \mapsto \Tht_n^\infty(\cW_\by^a)$ is also continuous on 
$\cV_b$. Thus $\by \mapsto \Tht_n^\infty(\cW_\by)$ is continuous over 
$\Rs\setminus  K(\vp)$.
\end{theorem}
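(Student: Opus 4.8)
The plan is to deduce Theorem \ref{thm:main-hyper} as an essentially formal consequence of the Gauss--Bonnet type identities of Corollary \ref{cor:sgmi-infty} together with the continuity of the link-at-infinity invariants of the sub-level family proved in Proposition \ref{prop:cont-link-hyper}. First I would perform the usual reduction to the connected setting: fixing $b\in\{1,\ldots,\beta\}$ and a connected component $\cU_{b,a}$ of $\vp^{-1}(\cV_b)$, Corollary \ref{cor:trivial-sub-level} (together with Corollary \ref{cor:trivialisation}) guarantees that over $\cV_b$ the pieces $\cW_\by^a$ and their boundaries $W_\by^a=\dd\cW_\by^a$ form locally trivial $C^1$ fibre bundles with connected fibre; in particular the Euler characteristic $\by\mapsto\chi(\cW_\by^a)$ is locally constant, hence constant, on $\cV_b$.

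Next I would apply Corollary \ref{cor:sgmi-infty} fibrewise, i.e. to the hypersurface $W_\by^a=f_\by^{-1}(0)\cap\cU_{b,a}$ bounding $\cW_\by^a$, for each regular value $\by\in\cV_b$. This expresses
$$
\tfrac{1}{s_{n-1}}\,\sgm_{n-1}^\infty(W_\by^a),\qquad
\tfrac{1}{s_i b_{n-i-1}}\,\sgm_i^\infty(W_\by^a)\ \ (0\le i\le n-2),\qquad
\Tht_n^\infty(\cW_\by^a)
$$
as fixed integer linear combinations of the numbers $\chi(\cW_\by^a)$ and $\chi_l^\infty(\cW_\by^a)$, where $l$ ranges over $\{0,1,\ldots,n\}$; the important point, in contrast with the purely even situation of Remark \ref{rmk:Ki-iodd)}, is that Equation \eqref{eq:GB-hyp-i} is valid for \emph{all} $i=0,\ldots,n-2$ irrespective of parity, so the (generically non-zero) odd-index densities are covered on the same footing. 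Now Proposition \ref{prop:cont-link-hyper} gives continuity on $\cV_b$ of $\by\mapsto\chi_l^\infty(\cW_\by^a)$ for every $l=1,\ldots,n$, while $\chi_0^\infty(\cW_\by^a)\equiv 0$ (for $P\in\bG(0,n)$ the slice $\cW_\by^a\cap P$ is a point or empty, whose link at infinity is empty) and $\by\mapsto\chi(\cW_\by^a)$ is constant by the previous paragraph. Substituting into the linear combinations above yields the continuity on $\cV_b$ of $\by\mapsto\sgm_i^\infty(W_\by^a)$ for $i=0,\ldots,n-1$ and of $\by\mapsto\Tht_n^\infty(\cW_\by^a)$.

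To pass from the sheets $W_\by^a$ to the full $W_\by$ (and $\cW_\by$), I would note that near any point of $\Rs\setminus K(\vp)$ only finitely many sheets occur, they are pairwise disjoint, and the curvature integrals $\int_{W_\by\cap\bB_R^n}\sigma_i^{W_\by}$ and the volumes $\cH_n(\cW_\by\cap\bB_R^n)$ split as sums over the sheets; hence $\sgm_i^\infty$ and $\Tht_n^\infty$ are additive over the $W_\by^a$ (resp.\ $\cW_\by^a$), and continuity of each summand gives continuity of the totals on $\Rs\setminus K(\vp)$. I do not expect a genuine obstacle here: all the substantive work has already been carried out — the stability of (MR)-regularity under generic plane sections (Theorem \ref{thm:M-R-plane-sections}), the constancy of the links at infinity of the sub-level family (Proposition \ref{prop:link-infty-const-sub-level}, used inside Proposition \ref{prop:cont-link-hyper} via Lebesgue's dominated convergence theorem), and the Gauss--Bonnet identities of Theorem \ref{thm:GaussBonnet}. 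The only step needing mild care is the bookkeeping of the index range $l=0,\ldots,n$ in Corollary \ref{cor:sgmi-infty}, in particular handling the degenerate endpoints $l=0$ and $l=n$, and checking the (routine) additivity of the densities at infinity over the connected sheets.
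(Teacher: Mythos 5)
Your proposal is correct and follows essentially the same route as the paper's own proof: both deduce the result by combining the Gauss--Bonnet identities (Theorem \ref{thm:GaussBonnet} / Corollary \ref{cor:sgmi-infty}) applied to the $n$-dimensional sub-level sets $\cW_\by$ with the continuity of $\by\mapsto\chi_l^\infty(\cW_\by^a)$ from Proposition \ref{prop:cont-link-hyper} and the local constancy of $\chi(\cW_\by^a)$ coming from the trivialisation results. The only cosmetic difference is that the paper passes explicitly through the intermediate quantities $\Lbd_k^\infty(\cW_\by)$, whereas you substitute directly into Corollary \ref{cor:sgmi-infty} and note $\chi_0^\infty\equiv 0$; these are the same computation.
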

\begin{proof} 
Let $\bc \in \Rs \setminus K(f)$. We can assume that both $\cW_\bc$ and $W_\bc$ 
are connected. Let us apply the Gauss-Bonnet formulas of Theorem 
\ref{thm:GaussBonnet} to the $n$-dimensional closed definable set $\cW_\by$.
We get
\begin{equation}\label{eq:hyp-0}
\Lambda_0^\infty(\cW_\by) = \chi(\cW_\by) - 
\chi_n^\infty(\cW_\by) - \chi_{n-1}^\infty(\cW_\by).
\end{equation}
Moreover, for $k=0,\ldots,n-2$, 
we have:
\begin{equation}\label{eq:hyp-k}
\Lbd_k^\infty(\cW_\by) = -
\chi_{n-k-1}^\infty(\cW_\by) + \chi_{n-k+1}^\infty(\cW_\by),
\end{equation}
and
\begin{equation}\label{eq:hyp-n-1}
\Lbd_{n-1}^\infty(\cW_\by) = \chi_2^\infty(\cW_\by)
\end{equation}
\begin{equation}\label{eq:hyp-n}
\Theta_n^\infty(\cY) = 
\Lbd_n^\infty(\cW_\by) = \chi_1^\infty(\cW_\by).
\end{equation}
Since $\cW_\by$ is $n$-dimensional, Equalities \eqref{eq:hyp-0}, 
\eqref{eq:hyp-k}, \eqref{eq:hyp-n-1}, and \eqref{eq:hyp-n} are non-trivial.

\medskip\noindent
Applying Proposition \ref{prop:cont-link-hyper}, we obtain that for $k = 0,
\ldots,n$, the functions
$$
\by \mapsto \Lbd_k^\infty(\cW_\by)
$$ 
are continuous on $\Rs \setminus K(\vp)$. It is enough to apply the 
expressions of Corollary \ref{cor:sgmi-infty} to conclude.
\end{proof}
%
%
%
%
%
%
%
%
%
%
%
%
%
%
%
%
%
%
%
%
%
%
%
%
%
%
%
%
%
%
%
%
%
%
%

\end{document}